\newtheorem{theorem}{Theorem}[section]
\newtheorem{assumption}[theorem]{Assumption}
\newtheorem{corollary}[theorem]{Corollary}
\newtheorem{definition}[theorem]{Definition}
\newtheorem{proposition}[theorem]{Proposition}
\DeclareMathOperator*{\argmin}{argmin}
\DeclareMathOperator*{\argmax}{argmax}
\DeclareMathOperator{\interior}{int}
\newcommand{\set}[1]{\{#1\}}
\newcommand{\eq}[1]{\left\{\begin{array}{ll} #1 \end{array}\right}
\definecolor{electricgreen}{rgb}{0.0, 1.0, 0.0}
\begin{document}
\title{\textbf{An Inexact Restoration Direct Multisearch Filter Approach to Multiobjective Constrained Derivative-free Optimization}}
\author{Everton J. Silva\thanks{Ph.D Program in Mathematics, NOVA School of Science and Technology (NOVA FCT), Center for Mathematics and Applications (NOVA Math), Campus de Caparica, 2829-516 Caparica, Portugal
\texttt{(ejo.silva@campus.fct.unl.pt)}. Support for this author was provided by Fundação para a Ciência e a Tecnologia (Portuguese Foundation for Science and Technology) under the projects UI/BD/151246/2021, UIDP/00297/2020, and UIDB/00297/2020.} \and Ana Luísa Custódio \thanks{Department of Mathematics, NOVA School of Science and Technology (NOVA FCT), Center for Mathematics and Applications (NOVA Math), Campus de Caparica, 2829-516 Caparica, Portugal \texttt{(alcustodio@fct.unl.pt)}. Support for this author
was provided by Fundação para a Ciência e a Tecnologia (Portuguese Foundation for Science and Technology) under the projects UIDP/00297/2020 and UIDB/00297/2020.}}

\maketitle
\begin{abstract}
Direct Multisearch (DMS) is a well-established class of methods
for multiobjective derivative-free optimization, where constraints
are addressed by an extreme barrier approach, only evaluating
feasible points. In this work, we propose a filter approach,
combined with an inexact feasibility restoration step, to address
constraints in the DMS framework. The filter approach treats
feasibility as an additional component of the objective function,
avoiding the computation of penalty parameters or Lagrange
multipliers. The inexact restoration step attempts to generate new
feasible points, contributing to prioritize feasibility, a
requirement for the good performance of any filter approach.
Theoretical results are provided, analyzing the different types of
sequences of points generated by the new algorithm, and numerical
experiments on a set of nonlinearly constrained biobjective
problems are reported, stating the good algorithmic performance of
the proposed approach.
\\[5pt]
{\bf Keywords:} Derivative-free Optimization; Black-box
optimization; Multiobjective Optimization; Constrained
Optimization; Direct Multisearch; Filter methods; Inexact
Restoration.\\[5pt]
{\bf AMS Classification:} 90C29, 90C30, 90C56, 65K05, 49J52.
\end{abstract}

\section{Introduction}
\label{sec:int}
Multiobjective optimization is a challenging scientific domain,
not only from a theoretical point of view but also due to its
extensive range of
applications~\cite{brito2022,AHabiba_et_al_2022,FMoleiro_et_al_2021,YXing_et_al_2023}.
The presence of several conflicting objectives, that need to be
simultaneously optimized, changes the classical concept of problem
solution, which is no longer a single point. The goal is to
identify a set of nondominated points, meaning that it is
impossible to simultaneously improve all components of the
objective function for each one of these points.

This set of nondominated points is denominated as Pareto front and
constitutes the solution of the multiobjective optimization
problem. It can be supported by a convex, a nonconvex, a
connected, or a disconnected function, making its computation
difficult~\cite{miettinen1998,wiecek2016}. This task is even more
complicated if derivatives are not
available~\cite{audet2017,conn2009}. In this case, the problem to
be solved will be a multiobjective derivative-free optimization
problem.

Derivatives are an important tool in nonlinear
optimization~\cite{nocedal2006} since they can guide the search by
identifying directions of potential descent for the functions.
However, several reasons could explain their absence. For
instance, the problem by itself could be nonsmooth or, being
smooth, function evaluation could be computationally expensive,
preventing the use of numerical techniques to estimate
derivatives. Derivative-free optimization problems are often
associated to black-box functions, in the context of
simulation-based optimization~\cite{audet2017,conn2009}.

Multiobjective derivative-free optimization problems are commonly
addressed with heuristics, like is the case of evolutionary or
genetic algorithms~\cite{branke2008}. Although, if the cost
associated with computing a function value is high, these methods
are not an appropriate choice, often requiring a large number of
function evaluations. Additionally, theoretical analysis
supporting the numerical performance observed for these
heuristics, in general, is not yet available. In the last decade,
classes of derivative-free optimization methods have been
developed and analyzed for multiobjective optimization. In fact,
in a recent survey~\cite{larson2019}, multiobjective
derivative-free optimization was considered as ``an especially
open avenue of future research''.

According to~\cite{conn2009}, in single-objective derivative-free
optimization, three main classes of methods can be identified, for
which generalizations to multiobjective optimization, that compute
approximations to the complete Pareto front, have been proposed.
The first, directional direct search, was generalized to
multiobjective optimization with Direct
Multisearch~\cite{custodio2011}. This algorithmic class showed to
be competitive both in academic problems and in real applications,
even when compared with derivative-based multiobjective
optimization algorithms~\cite{andreani2022}.
BiMADS~\cite{audet2008}, MultiMADS~\cite{audet2010}, and
DMulti-MADS~\cite{bigeon2020} are other examples of algorithms of
directional direct search type.

The second class comprises trust-region methods based on quadratic
polynomial interpolation models~\cite{deshpande2016,ryu2014}.
These algorithms take advantage of the Taylor-like bounds that can
be established for the errors between these models and the
components of the objective function, proceeding by minimizing
each model by itself or using a scalarization
approach~\cite{eichfelder2008} to aggregate models in a
single-objective function to be minimized.

The final class is a generalization of implicit
filtering~\cite{CTKelley_2011} for bound constrained
multiobjective derivative-free optimization~\cite{cocchi2018}. At
each iteration of the proposed algorithm, a simplex Jacobian is
defined and used to compute an approximation to the multiobjective
steepest descent direction, which is then explored in a
line-search.

For any of the three above mentioned algorithmic classes,
convergence is analyzed, and numerical results support the ability
to compute good approximations to the complete Pareto front of a
given problem. Although, constraints have not been fully
addressed.

From the numerical point of view, the
works~\cite{audet2010,audet2008,cocchi2018,custodio2011,deshpande2016,ryu2014}
only report results on bound-constrained multiobjective
derivative-free optimization problems. Direct
Multisearch~\cite{custodio2011} and MultiMADS~\cite{audet2010} are
developed for general constraints. However, an extreme barrier
approach is adopted, only evaluating feasible points. To our
knowledge, DFMO~\cite{liuzzi2016} is the first algorithm that
explicitly addresses general constraints, using an exact penalty
approach.

In~\cite{bigeon2022}, two new constraint-handling strategies are
proposed for DMulti-MADS~\cite{bigeon2020}. The constraints are
aggregated into a single constraint violation function which is
used either in a two-phases approach (DMulti-MADS-TEB) or in a
progressive barrier approach (DMulti-MADS-PB). In DMulti-MADS-TEB,
the constraint violation function is minimized with the
MADS~\cite{audet2006} algorithm, in a single-objective setting,
until a feasible point is found. After, the algorithm proceeds as
in DMultiMads~\cite{bigeon2020}, making use of an extreme barrier
function, only evaluating feasible points. The second approach
(DMulti-MADS-PB) generalizes progressive
barrier~\cite{CAudet_JEDennis_2009} to multiobjective
optimization. The constraint violation function is considered as
an additional objective to be minimized, being rejected any trial
point with constraint violation function value above a given
threshold. This threshold is progressively decreased along the
iterations. Each iteration explores two poll centers,
corresponding to feasible and infeasible points, respectively.
Progressive barrier~\cite{CAudet_JEDennis_2009} can be regarded as
an evolution of filter methods~\cite{audet2004}, initially
proposed by Fletcher and Leyffer~\cite{fletcher2002} for
single-objective derivative-based optimization, as an alternative
to address general constraints. The work~\cite{fletcher2007}
provides a survey on filter methods.

In this work, we propose the integration of a filter approach in
DMS, to address multiobjective derivative-free optimization
problems with general constraints. Similarly to~\cite{bigeon2022},
the violations of the constraints are aggregated in an extra
objective function component to be minimized. However, differently
from~\cite{bigeon2022}, a proper criterion is defined to decide
when to explore feasible or infeasible points at a given iteration
(but never both) and the maximum value allowed for constraint
violation is never updated. Instead, when the point to be explored
at a given iteration is infeasible, the algorithm makes use of an
inexact feasibility restoration step. The motivation behind it is
that one should not evaluate the possibly expensive objective
function, without first trying to restore (or at least improve)
feasibility. Inexact restoration methods are well suited for this
purpose and were already explored in several works in
single-objective derivative-free optimization
(see~\cite{pilotta2015,Birgin2022,LFBueno_et_al_2013,Echebest2017,Ferreira2017,martinez_sobral2013}).
For a survey on general inexact restoration feasibility
approaches, see~\cite{pillota2005}.

The paper is organized as follows. Section~\ref{sec:algorithm}
describes the proposed algorithmic structure. The theoretical
properties of the sequences of points generated by the algorithm
are established in Section~\ref{sec:convergence}.
Section~\ref{sec:implementation} provides some details respecting
the numerical implementation used to compute the results reported
in Section~\ref{sec:numerical_experiments}. The paper ends in
Section~\ref{sec:conclusions} with some final remarks.

\section{An inexact restoration DMS filter algorithm}
\label{sec:algorithm}

Let us consider the multiobjective optimization problem with
general constraints (linear and nonlinear), defined by:
\begin{equation}
\begin{array}{l}
\displaystyle \min_{x\in X}\  F(x)=(f_{1}(x),\ldots ,f_{m}(x))^\top \\
\text{s.t. } C(x)\leq 0,\\
\end{array}
\label{MOO}
\end{equation}
where
$F:X\subseteq\mathbb{R}^{n}\to\set{\mathbb{R}\cup\{+\infty\}}^m$,
with $m\in \mathbb{N},m\geq 2$,
$C:X\subseteq\mathbb{R}^n\to\set{\mathbb{R}\cup\set{+\infty}}^p$,
$p\in\mathbb{N}$, and $X\subseteq\mathbb{R}^n$ denotes the set of
unrelaxable constraints~\cite{conn2009}. Therefore, the feasible
region, $\Upsilon$, of the multiobjective problem, assumed to be
nonempty, is given by $\Upsilon =X\cap \Omega\neq \emptyset$,
where $\Omega=\set{x\in \mathbb{R}^{n}\mid C(x)\leq 0}$.

In multiobjective optimization, the concept of Pareto dominance is
essential for point comparison. To describe it, we will make use
of the strict partial order induced by the cone
$\mathbb{R}^m_+=\set{z\in\mathbb{R}^m\mid z\geq 0},$ defined as:
$$F(x)\prec_F F(y) \Leftrightarrow
F(y)-F(x)\in\mathbb{R}_+^m\backslash \set{0}.$$ Given two points
$x,y$ in $\Upsilon$, we say that $x\prec_F y$, i.e., $x$ dominates
$y$, when $F(x)\prec_F F(y)$.

We are now in conditions of characterizing efficient points of
Problem~(\ref{MOO}).
\begin{definition}
A point $\bar{x}\in\Upsilon$ is said to be a global efficient
point of Problem~(\ref{MOO}) if there is no $y\in \Upsilon$ such
that $y\prec_F \bar{x}$. If there exists a neighborhood
$N(\bar{x})$ of $\bar{x}$ such that the previous property holds in
$\Upsilon \cap N(\bar{x})$, then $\bar{x}$ is called a local
efficient point of Problem~(\ref{MOO}).
\end{definition}

The image of the set of global efficient points for
Problem~(\ref{MOO}) constitutes the solution of the multiobjective
optimization problem and is denoted by the Pareto front.

In applications, unrelaxable constraints are often associated to
physical conditions that can not be violated (otherwise, it will
be impossible to evaluate the objective function). Thus, our
approach will address them with an extreme barrier function, only
evaluating points that satisfy these constraints. In the problem
definition, function $F$ will be replaced by $F_X$, defined as:
\begin{equation}
F_X(x)= \eq{ F(x)&, \text{ if } x\in X\\
(+\infty,+\infty,\ldots,+\infty)^\top&, \text{ otherwise.}}.
\label{barrier_approach}
\end{equation}

Although, points do not always need to remain feasible regarding
the relaxable constraints, defined by function $C(\cdot)$. We
intend to minimize this violation and a maximum threshold, $h_{\tt
max}>0$, will be allowed for it. For that, following the approach
of~\cite{audet2004}, proposed for single-objective optimization,
we will consider an additional nonnegative objective function
component, $h$, corresponding to an aggregated violation of the
relaxable constraints. Function $h$ should satisfy $h(x)=0$ if and
only if $x\in\Omega$. A possibility for its definition could be:
\begin{equation}
\label{constraintv_function} h(x)=\|C(x)_+\|^r,
\end{equation}
where $\|\cdot\|$ is a vector norm, $r>0$, and $C(x)_+$ is the
vector of $p$ constraint values, defined for $i=1,\ldots,p$ by
\begin{equation}
c_i(x)_+=\eq{  c_i(x) &, \text{ if } c_i(x)>0\\
0 &, \text{ otherwise }}.
\end{equation}
As an example, considering the $\ell_2$-norm and $r=2$, we have:
\begin{equation*}
h(x)=\|C(x)_+\|_2^2=\displaystyle\sum_{i=1}^{p}\max\set{0,c_i(x)}^2.
\end{equation*}

An approximation to the solution of Problem~(\ref{MOO}) will be
computed by solving
\begin{equation}
\min_{x\in X}\  \bar{F}_{\bar{X}} (x),\label{MOO2}
\end{equation}
where $\bar{F}(x)=(f_{1}(x),\ldots ,f_{m}(x),h(x))^\top$ and
$\bar{X}=\{x\in X \mid h(x)\leq h_{\tt max}\}$.

The algorithm proposed to address Problem~(\ref{MOO2}) is
developed under the DMS framework. Thus, a list of feasible
nondominated points, regarding the unrelaxable constraints, and
corresponding step sizes is considered. In a simplified way, each
iteration tries to improve this list of points, by adding new
nondominated points to it and removing dominated ones. The
procedure follows Algorithm 2.1 in~\cite{custodio2011}. At the end
of the optimization process, the points in the list that satisfy
$h(x)=0$ constitute the approximation to the Pareto front of the
original problem.

Each iteration starts with the selection of an iterate point (and
corresponding step size) from the list. This iterate point is
always feasible regarding the constraints defining the set $X$,
but can be infeasible with respect to the constraints defining the
set $\Omega$. In Section~\ref{poll_center_selection}, we will
propose rules for the selection of iterate points. After this
selection, a search step and possibly a poll step are performed.

The search step is optional, not required for establishing
convergence, so we will focus on the poll step. At this step, a
local search is performed around the current iterate, by exploring
directions belonging to a positive spanning set, scaled by the
step size. Details on the properties that this set of directions
needs to satisfy will be provided in
Section~\ref{sec:convergence}.

Due to the presence of relaxable constraints, the iterate point
could be infeasible regarding the set $\Omega$ (and the original
problem). In this situation, considering the expensive nature of
the objective function, it would be wiser to try to restore
feasibility, before initiating the poll procedure. For that, the
following inexact feasibility restoration problem will be solved:
\begin{equation*}
\begin{array}{l}
\displaystyle \min_{y\in X}\  \frac{1}{2}\|y-x_k\|^2\\
\text{s.t. } h(y)\leq \xi(\alpha_k) h(x_k),\\
\end{array}\label{IRP}
\end{equation*}
where $x_k$ and $\alpha_k$ denote the current iterate and
associated step size, respectively. Function $\xi:(0,+\infty)\to
]0,1[$ is continuous and satisfies $\xi(t)\to 0$ when $t\downarrow
0$. Since the feasible region $\Upsilon$ is nonempty, this problem
is well-defined. Solving the inexact restoration problem, before
polling being attempted, is an explicit way of prioritizing
feasibility. The definition of the $\xi(\cdot)$ function ensures
that if the stepsize goes to zero, in general, meaning that a
limit point is being attained, then feasibility regarding $\Omega$
is also being restored.

The list of points is a dynamic set, that will allow to classify
iterations as being successful or unsuccessful. Similarly to the
original implementation of DMS~\cite{custodio2011}, an iteration
is said to be successful if the iterate list changes, meaning that
at least one new feasible nondominated point was added to it. Here
feasibility respects only to the set $X$, of unrelaxable
constraints, and dominance function $\bar{F}$. Unsuccessful
iterations keep the list unchanged. Differently
from~\cite{bigeon2022}, comparisons are made for all the points in
the list, regardless of the associated feasibility.

The rule for updating the stepsize parameter follows what is
classical in directional direct search. Therefore, for successful
iterations, the step size parameter is either increased or kept
constant, i.e., $\alpha_{k,\tt new}\in [\alpha_k,\gamma \alpha_k]$
for $\gamma\geq 1$, for all the feasible nondominated points added
and for the poll center, if it remains in the list. At
unsuccessful iterations, the step size of the poll center is
decreased, i.e., $\alpha_{k,\tt new}\in
[\beta_1\alpha_k,\beta_2\alpha_k]$ for $0<\beta_1\leq \beta_2<1$.

Algorithm~\ref{DMS-FILTER-IR} details the DMS-FILTER-IR
multiobjective derivative-free constrained optimization method.

\begin{algorithm}
\begin{rm}
\begin{description}
{\small \vspace{1ex}
\item[Initialization] \ \\
Choose an initial step size parameter $\alpha_0 > 0$, $0 <
\beta_1\leq \beta_2 < 1$, and  $\gamma \geq 1$. Let $\mathcal{D}$
be a (possibly infinite) set of positive spanning sets, with
directions $d$ satisfying $0<d_{min}\leq\|d\|\leq d_{max}$. Define
$h(\cdot)$, the nonnegative violation aggregation function, and
$h_{\tt max}>0$, the maximum violation allowed for it. Define
$\xi(\cdot)$, to be used in the inexact restoration step. Consider
$x_0\in X$ such that $h(x_0)\leq h_{\tt max}$ and set
$L_0=\set{(x_0;\alpha_0)}$.\vspace{1ex}

\item[For $k=0,1,2,\ldots$] \ \\
\begin{enumerate}
\item[1.] {\bf Selection of an iterate point:} Order the list
$L_k$ according to some criteria and select $(x_k;\alpha_k)\in
L_k$ as the current iterate and step size parameter.\ \\

\item[2.] {\bf Search step:} Compute a finite set of points
$\{z_s\}_{s\in S}$ (in a mesh if $\bar\rho(\cdot)=0$, see
Section~\ref{sec:convergence}). Evaluate~$\bar{F}_{\bar{X}}$ at
each element of $\{z_s\}_{s\in S}$. Use $L_{add}=\{(z_s;\alpha_k),
s\in S\}$ to generate $L_{trial}$, by updating $L_k$ with the new
nondominated points in $L_{add}$ and removing the dominated ones.
If $L_{trial} \neq L_k$, then declare the iteration (and the
search step) successful,
set $L_{k+1}=L_{trial}$, and go to Step 5.\ \\

\item[3.] {\bf Inexact Restoration step:} If $h(x_k)>0$ then
define and solve the inexact restoration problem $y^*\in
\argmin_{y\in X}\  \frac{1}{2}\|y-x_k\|^2$ subject to $h(y)\leq
\xi(\alpha_k) h(x_k)$ (in a mesh if $\bar\rho(\cdot)=0$, see
Section~\ref{sec:convergence}). Evaluate $\bar{F}_{\bar{X}}$ at
$y^*$, define $L_{add}=\{(y^*;\alpha_k)\}$ to generate
$L_{trial}$, by updating $L_k$ with the new nondominated point in
$L_{add}$ and removing the dominated ones. If $L_{trial} \neq
L_k$, then declare the iteration (and the inexact restoration
step) as successful, set
$L_{k+1}=L_{trial}$, and go to Step 5.\ \\

\item[4.] {\bf Poll step:} Choose a positive spanning set $D_k$
from the set $\mathcal{D}$. Evaluate $\bar{F}_{\bar{X}}$ at
$P_k=\{x_k+\alpha_k d\mid d\in D_k\}$, define
$L_{add}=\{(x_k+\alpha_k d;\alpha_k), d\in D_k \}$ to generate
$L_{trial}$, by updating $L_k$ with the new nondominated points in
$L_{add}$ and removing the dominated ones. If $L_{trial} \neq
L_k$, then declare the iteration (and the poll step) as successful
and set $L_{k+1}=L_{trial}$. Otherwise, declare the iteration as
unsuccessful and set $L_{k+1}=L_k$.\ \\

\item[5.] {\bf Step size parameter update:} If the iteration was
successful, then maintain or increase the corresponding step size
parameter, by considering $\alpha_{k,{\tt new}}\in
[\alpha_k,\gamma\alpha_k]$. Replace all the new points $(x_k
+\alpha_k d; \alpha_k)$ in $L_{k+1}$ by $(x_k+ \alpha_k d;
\alpha_{k,{\tt new}})$, when success is coming from the poll step,
or $(y^*;\alpha_k)$ in $L_{k+1}$ by $(y^*;\alpha_{k,{\tt new}})$,
when success is coming from the inexact restoration step, or
$(z_s;\alpha_k)$ in $L_{k+1}$ by $(z_s;\alpha_{k,{\tt new}})$,
when success is coming from the search step. Replace also
$(x_k;\alpha_k)$, if in $L_{k+1}$, by $(x_k;\alpha_{k,{\tt
new}})$. Otherwise, decrease the step size parameter, by choosing
$\alpha_{k,{\tt new}}\in [\beta_1\alpha_k,\beta_2\alpha_k]$, and
replace the poll pair $(x_k;\alpha_k)$ in $L_{k+1}$ by
$(x_k;\alpha_{k,{\tt new}})$. \vspace{1ex}
\end{enumerate}
\item[EndFor]\ \\}
\end{description}
\end{rm}
\label{DMS-FILTER-IR} \caption{DMS-FILTER-IR method for
constrained MOO}
\end{algorithm}


\section{Convergence analysis}
\label{sec:convergence} Following the reasoning
of~\cite{audet2004,custodio2011}, this section analyzes the
properties of the different sequences of points generated by
DMS-FILTER-IR.

\subsection{Globalization strategies}

In classical directional direct search, the first step in the
convergence analysis is globalization, i.e., to ensure the
existence of a subsequence of step size parameters that converges
to zero. Two different strategies can be adopted. The first,
analyzed next, requires that all points generated by the algorithm
lie in an implicit mesh, corresponding to an integer lattice.

For establishing the result, we will need the following
assumption.

\begin{assumption}
\label{assumption1} The set $\mathcal{S}:=\bigcup_{j=1}^m\set{x\in
X \mid h(x)\leq h_{\tt max}\wedge f_j(x)\leq f_j(x_0)}$ is a
nonempty compact set.
\end{assumption}

In~\cite{custodio2011}, when DMS was originally proposed, the
directions to be used by the algorithm at each iteration belonged
to a positive spanning set $D_k$, selected from $\mathcal{D}$,
whose directions are built as nonnegative integer combinations of
the columns of a set $D$. The following assumption formalizes the
conditions imposed on the set $D$, in order to satisfy the
integrality requirements.

\begin{assumption}
\label{assumption2} The set $D$ of positive spanning sets is
finite and the elements of $D$ are of the form $G\bar{z}_j$,
$j=1,\ldots,|D|$,  where $G\in\mathbb{R}^{n\times n}$ is a
nonsingular matrix and each $\bar{z}_j$ is a vector in
$\mathbb{Z}^n$.
\end{assumption}

In the presence of general constraints, and possibly nonsmooth
functions, it is required to consider an infinite set of
directions $\mathcal{D}$, which should be dense (after
normalization) in the unit sphere~\cite{audet2006}. The set $D$ is
used for building the directions in $\mathcal{D}$.

\begin{assumption}
\label{assumption3} Let $D$ represent a positive spanning set
satisfying Assumption~\ref{assumption2}, with elements $d_k\in
D_k\in\mathcal{D}$ obtained as nonnegative integer combinations of
the columns of $D$.
\end{assumption}

To comply with the integrality requirements, additional conditions
need to be imposed in the update of the step size parameter.

\begin{assumption}
\label{assumption4} Let $\tau>1$ be a rational number and $m^{\tt
max}\geq 0$ and $m^{\tt min}\leq -1$ integers. If the iteration is
successful, then the step size parameter is maintained or
increased by considering $\alpha_{k,{\tt new}}=\tau^{m^+}\alpha_k$
with $m^+\in\set{0,\ldots,m^{\tt max}}$. If the iteration is
unsuccessful, then the step size parameter is decreased by setting
$\alpha_{k,{\tt new}}=\tau^{m^-}\alpha_k$, with $m^-\in\set{m^{\tt
min},\ldots,-1}$.
\end{assumption}

The update rule of Algorithm~1 complies with the one of
Assumption~\ref{assumption4} by setting $\beta_1=\tau^{m^{\tt
min}}$, $\beta_2=\tau^{-1}$, and $\gamma=\tau^{m^{\tt max}}$.

In addition, the points generated both at the search and at the
inexact restoration step need to lie in the implicit mesh
considered at each iteration by the algorithm.

\begin{assumption}
\label{assumption5} At iteration $k$, the search and the inexact
restoration steps in Algorithm~1 only evaluate points in
$$M_k=\displaystyle\bigcup_{x\in E_k} \set{x+\alpha_k Dz\mid z\in
\mathbb{N}^{|D|}_0},$$ where $E_k$ represents the set of all
points evaluated by the algorithm previously to iteration $k$.
\end{assumption}

The following theorem states that there is at least one
subsequence of iterations for which the step size parameter
converges to zero. The proof is omitted since it uses exactly the
same arguments of Theorem A.1 in~\cite{custodio2011}.

\begin{theorem}
\label{teoalfa_integerlattices} Let Assumption~\ref{assumption1}
hold. Under one of the Assumptions~\ref{assumption2}
or~\ref{assumption3} combined with Assumptions~\ref{assumption4}
and~\ref{assumption5}, DMS-FILTER-IR generates a sequence of
iterates satisfying
\begin{equation*}
\displaystyle \liminf_{k\to+\infty} \alpha_k =0.
\end{equation*}
\end{theorem}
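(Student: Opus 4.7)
The plan is to argue by contradiction, following exactly the structure of Theorem~A.1 in~\cite{custodio2011}. I would assume that $\liminf_{k\to+\infty}\alpha_k>0$, so there exists some $\alpha_*>0$ with $\alpha_k\geq\alpha_*$ for every $k$, and then derive a contradiction by showing that only finitely many distinct points can ever be generated, which eventually forces an unsuccessful iteration and hence a contraction of the step size parameter below~$\alpha_*$.

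First, I would use Assumption~\ref{assumption4} to write every step size as $\alpha_k=\tau^{r_k}\alpha_0$ for some integer $r_k$. The lower bound $\alpha_k\geq\alpha_*$ translates into a lower bound on $r_k$. Next I would obtain an upper bound on $\alpha_k$: at each iteration the stepsize is multiplied by at most $\tau^{m^{\tt max}}$, and under Assumption~\ref{assumption1} the iterates cannot escape the compact level set $\mathcal{S}$, so an unbounded sequence of step sizes would eventually generate poll/search/restoration points outside $\mathcal{S}$, contradicting the fact that successful iterations only add points whose components of $\bar F$ are finite. Thus $\alpha_k$ takes only finitely many values, say $\alpha_k\in\{\tau^{r}\alpha_0:r_{\min}\leq r\leq r_{\max}\}$.

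The second key step is the lattice argument. Using Assumption~\ref{assumption2} (or Assumption~\ref{assumption3}) together with Assumption~\ref{assumption5}, I would show by induction on $k$ that every evaluated point has the form $x_0+G z$ with $z\in(\alpha_0\tau^{r_{\min}})\mathbb{Z}^n$: poll displacements $\alpha_k d$, with $d$ a nonnegative integer combination of the columns of $D=G[\bar z_1,\ldots,\bar z_{|D|}]$, lie in $\alpha_k G\mathbb{Z}^n$, and the search/restoration steps are mesh-constrained by Assumption~\ref{assumption5}. Since all step sizes are rational multiples of a common $\alpha_0\tau^{r_{\min}}$, the union of these displacements sits in a single integer lattice $\alpha_0\tau^{r_{\min}}G\mathbb{Z}^n$. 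Intersecting this lattice with the compact set $\mathcal{S}$ from Assumption~\ref{assumption1} (and with $\bar X$) yields only finitely many candidate points.

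Finally, I would close the argument as follows. The iterate list $L_k$ is always a subset of this finite set of candidates, and the filter update mechanism in Algorithm~\ref{DMS-FILTER-IR} strictly modifies $L_k$ at every successful iteration by inserting at least one new nondominated pair. Hence only finitely many iterations can be successful; beyond some index $k_0$, every iteration is unsuccessful, and the step size of the currently selected poll center is reduced by a factor in $[\beta_1,\beta_2]\subset(0,1)$. Eventually this produces $\alpha_k<\alpha_*$, contradicting the standing assumption. The main obstacle is the bookkeeping required to ensure that the inexact restoration step, which is a new ingredient relative to classical DMS, respects the mesh — but this is exactly what Assumption~\ref{assumption5} buys us, so the remainder of the proof carries over verbatim from~\cite{custodio2011}.
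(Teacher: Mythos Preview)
Your proposal is correct and takes exactly the route the paper takes: the paper omits the proof entirely and simply states that it ``uses exactly the same arguments of Theorem~A.1 in~\cite{custodio2011}'', the only new wrinkle being that the inexact restoration step must respect the mesh, which is precisely what Assumption~\ref{assumption5} guarantees --- a point you identify explicitly. One minor remark: your justification for the upper bound on $\alpha_k$ (``points outside $\mathcal{S}$ would have infinite $\bar F$'') is not quite right, since points in $\bar X\setminus\mathcal{S}$ may well have finite $\bar F$; when you write out the details, follow the boundedness argument in~\cite{custodio2011} verbatim rather than this shortcut.
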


Globalization can also be ensured by requiring sufficient decrease
to accept new points, by means of a forcing function. A forcing
function $\rho:(0,+\infty)\rightarrow (0,+\infty)$ is a continuous
and nondecreasing function, that satisfies $\rho(t)/t \rightarrow
0$ when $t \downarrow 0$ (see~\cite{kolda2003}). Typical examples
of forcing functions are $\rho(t) = \eta_1t^{1+\eta_2}$, for
$\eta_1,\eta_2
> 0$. Definition~\ref{new_dominance} traduces the new dominance
relationship considered.

\begin{definition}\label{new_dominance}
Let $y$ belong to $\bar{X}$ and $L$ be a list of nondominated
points in $\bar{X}$. We say that $y$ is dominated if:
$$\exists x\in L: \bar{F}(x)-\rho(\alpha)\leq \bar{F}(y),$$
where $\rho(\cdot)$ denotes a forcing function and $\alpha$ the
step size associated to the current iteration.
\end{definition}

Figure~\ref{suf_decre} illustrates the situation for the list of
infeasible points, $L$, whose images by function $\bar{F}$
correspond to the green dots. $D(L)\subset\mathbb{R}^{m+1}$
represents the image of the set of points dominated by the points
in $L$ and $D(L; \rho(\alpha))$ denotes the set of points whose
distance in the ${\ell}_{\infty}$ norm to $D(L)$ is no larger than
$\rho(\alpha)>0$. Points will be accepted if their image by
$\bar{F}$ does not belong to $D(L; \rho(\alpha))$, ensuring an
increase in the hypervolume associated to the list of points of at
least $(\rho(\alpha))^{m+1}$ (see Lemma 3.1
in~\cite{custodio2021}).

\begin{figure}[H]
\centering
\includegraphics[scale=0.5]{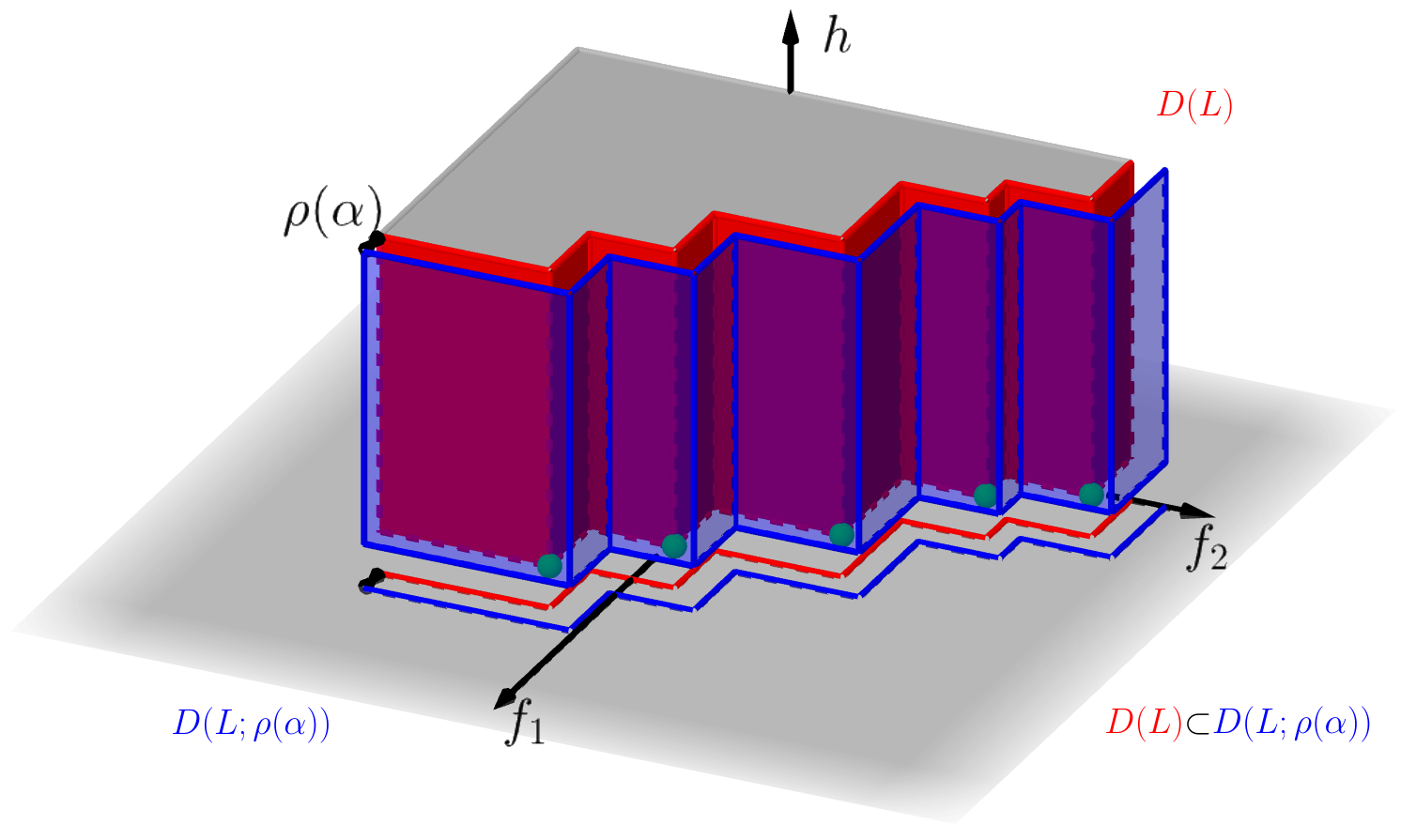}
\caption{Globalization strategy based on a sufficient decrease
condition in Algorithm~1.} \label{suf_decre}
\end{figure}

The assumptions required to ensure globalization under a
sufficient decrease approach are slightly different.

\begin{assumption}
\label{assumption6} The function $F:\mathbb{R}^n\to
\mathbb{R}^{m}$ is bounded in the set
$\mathcal{S}:=\bigcup_{j=1}^m\set{x\in X \mid h(x)\leq h_{\tt
max}\wedge f_j(x)\leq f_j(x_0)}$.
\end{assumption}

By definition, $0\leq h(x)\leq h_{\tt max}$. Thus,
Assumption~\ref{assumption6} guarantees that the function
$\bar{F}:\mathbb{R}^n\to \mathbb{R}^{m+1}$, defined by
$\bar{F}=(F,h)$, is also bounded in $\mathcal{S}$.

The following theorem states the convergence to zero of at least
one subsequence of step sizes, when using a globalization strategy
based on sufficient decrease.

\begin{theorem}
\label{teoalfa_sufficientdecrease} Consider a globalization
strategy based on imposing a sufficient decrease condition and let
Assumption~\ref{assumption6} hold. Then, DMS-FILTER-IR generates a
sequence of iterates satisfying
\begin{equation*}
\displaystyle\liminf_{k\to +\infty} \alpha_k =0.
\end{equation*}
\end{theorem}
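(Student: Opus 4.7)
The plan is an argument by contradiction that mirrors the sufficient-decrease analyses of \cite{custodio2011} and \cite{custodio2021}, adapted to the extended objective $\bar{F}=(F,h)^\top$ and to the extra inexact restoration step. Suppose, for contradiction, that $\liminf_{k\to\infty}\alpha_k=\alpha^*>0$, so that $\alpha_k\geq\alpha^*$ for all $k$.

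First, I would set up a bounded hypervolume indicator for $L_k$. By Assumption~\ref{assumption6}, $F$ is bounded on $\mathcal{S}$, and by construction $0\leq h(x)\leq h_{\tt max}$ for every $x\in\bar{X}$; hence $\bar{F}$ is bounded on $\mathcal{S}$. Arguing as in the analogous proof of \cite{custodio2011}, and using that every point ever added to the list is nondominated relative to $L_k\cup\{x_0\}$ in the sense of Definition~\ref{new_dominance}, one can enclose $\bar{F}(L_k)$ inside a fixed bounded box $B\subset\mathbb{R}^{m+1}$, uniformly in $k$. Define $V_k$ as the $(m+1)$-dimensional Lebesgue measure of $D(L_k)\cap B$, so that $V_k\leq\mathrm{vol}(B)<+\infty$ for all $k$.

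Next, I would show that $V_k$ is nondecreasing, strictly so at every successful iteration. Removing a point that is dominated by another element of the list does not shrink $D(L_k)$, while adding a new point enlarges it. Since any point added by the search, the inexact restoration, or the poll step passes the sufficient-decrease test of Definition~\ref{new_dominance}, Lemma~3.1 in \cite{custodio2021} yields $V_{k+1}-V_k\geq\rho(\alpha_k)^{m+1}\geq\rho(\alpha^*)^{m+1}>0$ at every successful iteration. Combined with the uniform upper bound $V_k\leq\mathrm{vol}(B)$, this forces the number of successful iterations to be finite.

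Hence there exists $k_0$ such that every iteration $k\geq k_0$ is unsuccessful and the list stabilizes to a fixed finite set $L^*$. From $k_0$ onward, each selection of a pair $(x_k;\alpha_k)\in L^*$ in Step~1 leads to an unsuccessful iteration, and Step~5 replaces $\alpha_k$ by a value in $[\beta_1\alpha_k,\beta_2\alpha_k]$ with $\beta_2<1$. Since $L^*$ is finite and infinitely many iterations remain, by pigeonhole some element of $L^*$ is selected infinitely often, and its step size, multiplied by a factor at most $\beta_2$ at each such selection, is driven to zero. This yields a subsequence of selected step sizes converging to zero, contradicting $\alpha_k\geq\alpha^*>0$. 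The main obstacle I expect is the careful bookkeeping needed to verify that points produced by the new inexact restoration step fit into the hypervolume framework of \cite{custodio2021} — namely, that their images under $\bar{F}$ remain in the bounded box $B$ throughout the run — so that the classical sufficient-decrease increment $\rho(\alpha)^{m+1}$ can be invoked without modification; once this technical point is settled, the remainder is a direct transcription of the standard DMS argument.
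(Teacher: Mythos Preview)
Your proposal is correct and follows essentially the same approach as the paper: both argue by contradiction from $\alpha_k\geq\alpha^*>0$, invoke Lemma~3.1 of \cite{custodio2021} to obtain a hypervolume increment of at least $\rho(\alpha^*)^{m+1}$ at each successful iteration, and contrast this with the boundedness of $\bar{F}$ from Assumption~\ref{assumption6}. The only cosmetic difference is the order in which the two incompatible facts are presented: the paper first notes that infinitely many successful iterations are forced (since an all-unsuccessful tail would drive some step size to zero) and then contradicts bounded hypervolume, whereas you first bound the hypervolume, conclude finitely many successes, and then make explicit the pigeonhole argument on the finite stabilized list to produce a step-size subsequence going to zero. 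Your concern about the inexact restoration step is not an obstacle, since any point added to $L_k$---regardless of which step produced it---passes the sufficient-decrease test of Definition~\ref{new_dominance}, which is all that Lemma~3.1 requires.
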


\begin{proof}
Let us assume that $\displaystyle\liminf_{k\to +\infty}
\alpha_k\neq 0$, meaning that there is $\alpha^*$ such that
$\alpha_k>\alpha^*$ for all $k$. At each unsuccessful iteration
$k$, the corresponding step size parameter is reduced by at least
$\beta_2\in (0,1)$. Thus, the number of successful iterations must
be infinite.

Successful iterations increase the hypervolume of the dominated
region associated to the function $\bar{F}$ and the list of points
in at least $(\rho(\alpha_k))^{m+1}$, where $\alpha_k$ represents
the step size associated with the current iteration (see Lemma 3.1
in~\cite{custodio2021}).

Since $\rho(\cdot)$ is  a nondecreasing function, which satisfies
$\rho(t)>0$ for $t>0$, there exists $\rho^*>0$ such that
$\rho(\alpha_k)\geq \rho(\alpha^*)=\rho^*$. Thus, any successful
iteration will increase the hypervolume of the dominated region
associated to the list of points for function $\bar{F}$ in at
least $(\rho^*)^{m+1}$, contradicting
Assumption~\ref{assumption6}.
\end{proof}

\subsection{Sequences and stationarity}

To establish the convergence of direct search methods of
directional type, the behavior of the algorithms needs to be
analyzed at limit points of particular sequences of unsuccessful
iterates, denoted by refining subsequences.

\begin{definition}
A subsequence $\set{x_k}_{k\in K}$ of iterates generated by
DMS-FILTER-IR, corresponding to unsuccessful poll steps, is said
to be a refining subsequence if $\set{\alpha_k}_{k\in K}$
converges to zero.
\end{definition}

Assumption~\ref{assumption1},
Theorems~\ref{teoalfa_integerlattices}
or~\ref{teoalfa_sufficientdecrease}, and the updating strategy of
the step size allow to establish the existence of at least one
convergent refining subsequence (see, e.g.,~\cite[Section
7.3]{conn2009}). The limit point $\bar{x}$ of a refining
subsequence $\set{x_k}_{k\in K}$ is said to be a refined point. As
suggested by the numerical experiments reported in
Section~\ref{sec:numerical_experiments}, it is common that the
algorithm will generate several refined points.
In~\cite{bigeon2020} the same type of result is established using
the concept of linked sequences.

Refined points, corresponding to limit points of sequences of
unsuccessful iterates, will be the natural candidates to
Pareto-Clarke stationarity. For defining it, in a nonsmooth
setting, we will need a generalization of the tangent cone
commonly used in nonlinear programming, namely the Clarke's
tangent cone~\cite{clarke1990}.

\begin{definition}
\label{tangent_cone} A vector $d\in\mathbb{R}^n$ is said to be a
Clarke tangent vector to the set $Y\subseteq\mathbb{R}^n$ at the
point $\bar{x} \in cl(Y)$ if for every sequence $\set{y_k}$ of
elements of $Y$ that converges to $\bar{x}$ and for every sequence
of positive real numbers $\set{t_k}$ converging to zero, there
exists a sequence of vectors $\set{w_k}$ converging to $d$ such
that $y_k+t_k w_k\in Y$.
\end{definition}

The set of all Clarke tangent vectors to $Y$ at $\bar{x}$ is
called the Clarke tangent cone to $Y$ at $\bar{x}$, and is denoted
by $T^{Cl}_Y(\bar{x})$. The tangent cone is the closure of another
relevant cone for the following analysis, namely the Clarke's
hypertangent cone~\cite{clarke1990}.

\begin{definition}
A vector $d\in\mathbb{R}^n$ is said to be a Clarke hypertangent
vector to the set $Y\subseteq\mathbb{R}^n$ at the point $\bar{x}
\in Y$ if there exists a scalar $\epsilon>0$ such that $y+t w \in
Y$ for all $y\in Y\cap B(\bar{x};\epsilon)$, $w\in B(d;\epsilon)$,
and $0<t<\epsilon$.
\end{definition}

The set of all hypertangent vectors to $Y$ at $\bar{x}$ is called
the hypertangent cone to $Y$ at $\bar{x}$ and is denoted by
$H^{Cl}_Y(\bar{x})$. Whenever the interior of $T^{Cl}_Y(\bar{x})$
is nonempty, $H^{Cl}_Y(\bar{x}) = \interior(T^{Cl}_Y(\bar{x}))$.

The notion of directional derivative needs also to be generalized
to nonsmooth functions, accounting for the presence of
constraints~\cite{clarke1990,JJahn_1996}.

\begin{definition}
Let $g:\mathbb{R}^n\to\mathbb{R}$ be Lipschitz continuous near
$\bar{x}\in Y$. The Clarke-Jahn generalized derivative of $g$
along $d\in H^{Cl}_Y(\bar{x})$ is defined as:
\begin{equation*}
g^\circ(\bar{x};d):= \limsup_{x \to \bar{x}, x \in Y \atop t
\downarrow 0, x+t d \in Y} \dfrac{g(x+t d)-g(x)}{t}.
\end{equation*}
\end{definition}

When $H^{Cl}_Y(\bar{x})$ is nonempty, the Clarke-Jahn generalized
directional derivatives along directions $v$ in
$T^{Cl}_Y(\bar{x})$ can be computed by taking limits of sequences
of directions belonging to the hypertangent cone~\cite{audet2006}.

\begin{proposition}
Let $g:\mathbb{R}^n\to\mathbb{R}$ be Lipschitz continuous near
$\bar{x}\in Y$ and assume that $H^{Cl}_Y(\bar{x})$ is nonempty.
The Clarke-Jahn generalized derivative of $g$ along $v\in
T^{Cl}_Y(\bar{x})$ can be computed as:
\begin{equation*}
g^\circ(\bar{x};v):=\lim_{d\to v \atop d\in H^{Cl}_Y(\bar{x})}
g^\circ(\bar{x};d).
\end{equation*}
\end{proposition}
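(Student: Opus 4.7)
The plan is to establish the claim by showing that $d \mapsto g^{\circ}(\bar{x};d)$ is Lipschitz on the hypertangent cone $H^{Cl}_Y(\bar{x})$, and then using density of $H^{Cl}_Y(\bar{x})$ in $T^{Cl}_Y(\bar{x})$ (already mentioned in the excerpt, since $H^{Cl}_Y(\bar{x})$ is nonempty implies $T^{Cl}_Y(\bar{x}) = \mathrm{cl}(H^{Cl}_Y(\bar{x}))$) to extend $g^{\circ}(\bar{x};\cdot)$ continuously to $T^{Cl}_Y(\bar{x})$. This extension will coincide with the stated limit, which will automatically be well-defined and independent of the approximating sequence.

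First, I would fix two directions $d,d' \in H^{Cl}_Y(\bar{x})$ and let $\lambda$ be a Lipschitz constant for $g$ on a neighborhood of $\bar{x}$. Picking any sequence $x_k \to \bar{x}$ with $x_k \in Y$ and $t_k \downarrow 0$ with $x_k + t_k d \in Y$ that realizes the $\limsup$ defining $g^{\circ}(\bar{x};d)$, the hypertangency of $d'$ guarantees that $x_k + t_k d' \in Y$ for $k$ large enough (since $d' \in B(d';\epsilon)$ trivially in the hypertangent definition). Then the algebraic split
\begin{equation*}
\frac{g(x_k+t_k d)-g(x_k)}{t_k} = \frac{g(x_k+t_k d')-g(x_k)}{t_k} + \frac{g(x_k+t_k d)-g(x_k+t_k d')}{t_k}
\end{equation*}
together with $|g(x_k+t_k d)-g(x_k+t_k d')| \leq \lambda t_k \|d-d'\|$ yields, after taking $\limsup$, $g^{\circ}(\bar{x};d) \leq g^{\circ}(\bar{x};d') + \lambda \|d-d'\|$. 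By symmetry, $|g^{\circ}(\bar{x};d)-g^{\circ}(\bar{x};d')| \leq \lambda \|d-d'\|$, so $g^{\circ}(\bar{x};\cdot)$ is Lipschitz on $H^{Cl}_Y(\bar{x})$.

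Next, given $v \in T^{Cl}_Y(\bar{x}) = \mathrm{cl}(H^{Cl}_Y(\bar{x}))$, I would take any sequence $\{d_n\} \subset H^{Cl}_Y(\bar{x})$ with $d_n \to v$. By the Lipschitz inequality from the previous paragraph, $\{g^{\circ}(\bar{x};d_n)\}$ is a Cauchy sequence in $\mathbb{R}$, hence convergent. The same Lipschitz bound also shows that if $\{d_n'\}$ is another sequence in $H^{Cl}_Y(\bar{x})$ converging to $v$, then $|g^{\circ}(\bar{x};d_n)-g^{\circ}(\bar{x};d_n')| \leq \lambda \|d_n-d_n'\| \to 0$, so the limit is independent of the chosen sequence. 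This common value is exactly the limit displayed in the statement, which thus defines $g^{\circ}(\bar{x};v)$ unambiguously.

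The main obstacle I anticipate is the first paragraph's $Y$-feasibility subtlety: the $\limsup$ defining $g^{\circ}(\bar{x};d)$ requires $x+td \in Y$, but comparing to a different direction $d'$ requires $x+td' \in Y$ simultaneously. The remedy is precisely the \emph{hyper}tangent nature of $d'$ (as opposed to mere tangency), which provides a uniform neighborhood ensuring $x+td' \in Y$ whenever $x$ is close enough to $\bar{x}$ and $t$ small enough. After this is resolved, the remaining steps are straightforward Cauchy/density arguments.
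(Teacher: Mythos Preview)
The paper does not give its own proof of this proposition: it is stated immediately after the sentence ``\ldots can be computed by taking limits of sequences of directions belonging to the hypertangent cone~[audet2006]'' and is simply imported from that reference. So there is no in-paper proof to compare against.

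Your argument is correct and is essentially the standard one found in the cited literature. The key technical point---that comparing the difference quotients along two different hypertangent directions $d$ and $d'$ requires both $x_k+t_k d\in Y$ and $x_k+t_k d'\in Y$ simultaneously---is handled exactly as it should be: the realizing sequence for $g^{\circ}(\bar{x};d)$ supplies the first inclusion, and the hypertangency of $d'$ (applied with $w=d'$ itself and $y=x_k$) supplies the second for all large $k$. One minor point worth making explicit is that $x_k+t_k d$ and $x_k+t_k d'$ both lie in the neighborhood of $\bar{x}$ where the Lipschitz constant $\lambda$ is valid once $k$ is large, but this is immediate. The density step, using $T^{Cl}_Y(\bar{x})=\mathrm{cl}\bigl(H^{Cl}_Y(\bar{x})\bigr)$ when the hypertangent cone is nonempty, and the Cauchy/uniqueness argument for the limit are both clean. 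Since the proposition uses ``$:=$'' and is really asserting that the displayed limit exists and is independent of the approximating hypertangent sequence, your proof establishes precisely what is claimed.
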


We are now in conditions of establishing what would be a
first-order stationarity result for Problem~(\ref{MOO}). The
following definition states essentially that there is no direction
in the tangent cone that is descent for all components of the
objective function.
\begin{definition}
Let $F$ be Lipschitz continuous near a point $\bar{x}\in
\Upsilon$. We say that $\bar{x}$ is a Pareto-Clarke critical point
of $F$ in $\Upsilon$, if for each direction $d\in
T^{Cl}_{\Upsilon}(\bar{x})$, there exists a
$j=j(d)\in\set{1,\ldots,m}$ such that
$f_{j}^{\circ}(\bar{x};d)\geq 0$.
\end{definition}

If the objective function is differentiable, the previous
definition can be reformulated using the columns of the Jacobian
matrix.

\begin{definition}
Let $F$ be strictly differentiable at a point $\bar{x}\in
\Upsilon$. We say that $\bar{x}$ is a Pareto-Clarke-KKT critical
point of $F$ in $\Upsilon$, if for each direction $d\in
T_{\Upsilon}^{Cl}(\bar{x})$, there exists a
$j=j(d)\in\set{1,\ldots,m}$ such that $\nabla f_{j}(x)^\top d \geq
0$.
\end{definition}

Finally, for establishing the desired stationarity results we need
the definition of refining directions associated with refined
points.

\begin{definition}
\label{refdirection} Let $\bar{x}$ be the limit point of a
convergent refining subsequence $\set{x_k}_{k\in K}$. If the limit
$\displaystyle \lim_{k\in K^\prime} \frac{d_k}{\|d_k\|}$ exists,
where $K^\prime\subseteq K$ and $d_k\in D_k$, and if $x_k+\alpha_k
d_k$ is feasible, for sufficiently large $k\in K^\prime$, then
this limit is said to be a refining direction for $\bar{x}$.
\end{definition}

The convergence analysis will initiate with the study of the
behavior of the algorithm along refined directions, belonging to
the hypertangent cone, both computed at a refined point.

\subsection{DMS-FILTER-IR convergence results} \label{op_cond}
In this section, we present the main convergence results of
DMS-FILTER-IR. The analysis is based on the property established
by Proposition~\ref{prop_dominance}, valid for any of the two
globalization strategies considered. Function $\bar{\rho}(\cdot)$
corresponds to the forcing function $\rho(\cdot)$, when
globalization is based on a sufficient decrease condition, or is
defined as the null function ($\bar{\rho}(\cdot)=0$), when
globalization results from the use of integer lattices. For
simplicity when stating the results, we also define $f_{m+1}$ as
being equal to $h$, the aggregated violation of the relaxed
constraints.

\begin{proposition}\label{prop_dominance}
Let $x\in L$ and $y\in \bar{X}$ be a dominated point at an
iteration associated with the step size $\alpha$. Then:
$$\exists j\in\{1,\ldots,m+1\}:f_j(y)>f_j(x)-\bar{\rho}(\alpha).$$
\end{proposition}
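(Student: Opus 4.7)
The plan is to proceed by contradiction and exploit the fact that the list $L$ maintained by Algorithm~\ref{DMS-FILTER-IR} contains only points that are nondominated with respect to whatever notion of dominance is currently in force. I would assume, toward a contradiction, that $f_j(y)\le f_j(x)-\bar\rho(\alpha)$ for every $j\in\{1,\ldots,m+1\}$, and show that this forces some $x^*\in L$ to dominate the given $x\in L$, which is impossible.

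The first step is to unpack the hypothesis ``$y$ is dominated''. For the integer lattice globalization ($\bar\rho\equiv 0$) this produces $x^*\in L$ with $\bar F(x^*)\prec_{\bar F}\bar F(y)$; for the sufficient decrease globalization ($\bar\rho=\rho$), Definition~\ref{new_dominance} provides $x^*\in L$ with $\bar F(x^*)-\rho(\alpha)\le \bar F(y)$. In both cases, $f_j(x^*)-\bar\rho(\alpha)\le f_j(y)$ for every $j$, and chaining with the contradiction hypothesis yields $f_j(x^*)\le f_j(x)$ componentwise. In the integer lattice case the strict component required by $\prec_{\bar F}$ propagates through the chain, so some $j$ satisfies $f_j(x^*)<f_j(x)$, i.e., $x^*$ Pareto-dominates $x$; in the sufficient decrease case the strict positivity of $\rho(\alpha)$ upgrades $f_j(x^*)\le f_j(x)$ to $f_j(x^*)-\rho(\alpha)<f_j(x)$ componentwise, so that $x^*$ dominates $x$ in the sense of Definition~\ref{new_dominance}. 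Either conclusion contradicts the nondominated character of $L$, closing the argument when $x^*\neq x$.

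The step I expect to need the most care is the degenerate subcase $x^*=x$, in which the chain of inequalities collapses to the equalities $\bar F(y)=\bar F(x)-\bar\rho(\alpha)(1,\ldots,1)^\top$. For $\bar\rho\equiv 0$ this directly contradicts the ``not equal'' part intrinsic to $\prec_{\bar F}$, since it would force $\bar F(x^*)=\bar F(y)$. For $\bar\rho=\rho>0$, the strictness of the conclusion $f_j(y)>f_j(x)-\rho(\alpha)$ has to be recovered from the convention — standard in the DMS literature and consistent with the list-update rule of Algorithm~\ref{DMS-FILTER-IR} — that ``$y$ is dominated'' tacitly requires $\bar F(y)\neq \bar F(x^*)-\rho(\alpha)(1,\ldots,1)^\top$, so that this exact boundary configuration is not in fact counted as dominated. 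Reconciling both globalization strategies inside a single uniform argument, while handling this tie cleanly, is the only spot where the proof is not essentially a one-line manipulation.
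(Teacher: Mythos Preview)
Your argument is essentially the paper's: assume $\bar F(y)\le\bar F(x)-\bar\rho(\alpha)$ componentwise, chain this with the inequality $\bar F(z)-\bar\rho(\alpha)\le\bar F(y)$ coming from a dominator $z\in L$ of $y$ to obtain $\bar F(x)\ge\bar F(z)-\bar\rho(\alpha)$ (with $\bar F(z)\neq\bar F(x)$ when $\bar\rho\equiv 0$), and contradict the mutual nondominance of $L$. The paper runs this as a single unified chain without splitting on the two globalization strategies and does not isolate the $z=x$ boundary case you flag; the tie-breaking ``convention'' you invoke for that corner in the sufficient-decrease setting is not stated in Definition~\ref{new_dominance} and is not something the paper's own proof appeals to.
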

\begin{proof}
Point $y$ is dominated, meaning that there is $z\in L$ such that
$$\bar{F}(z)-\bar{\rho}(\alpha)\leq \bar{F}(y),$$
and, if $\bar{\rho}(\cdot)=0$, $\bar{F}(z)\neq\bar{F}(y)$.

Suppose $\bar{F}(x)-\bar{\rho}(\alpha)\geq \bar{F}(y)$. Then,
$$\bar{F}(x)\geq \bar{F}(x)-\bar{\rho}(\alpha)\geq \bar{F}(z)-\bar{\rho}(\alpha).$$
Moreover, if $\bar{\rho}(\cdot)=0$ then $\bar{F}(z)\neq
\bar{F}(x)$. Thus, $x$ would be dominated by $z$, which is not
possible, since both $x$ and $z$ belong to the list of
nondominated points $L$.
\end{proof}

As the step size approaches zero, the poll step will allow to
recover the local sensitivities of the objective function.
Together with some proper smoothness assumptions, we can establish
that there is no locally improving direction, for the adequate
problem. The proof follows directly from Theorem 4.8
in~\cite{custodio2011}. For completeness, we reproduce it in
Theorem~\ref{teo_opt_f}, with the due adaptations.
\begin{theorem}
\label{teo_opt_f} Consider $\{x_k\}_{k\in K}$, a refining
subsequence generated by DMS-FILTER-IR, converging to the refined
point $\bar{x}\in \bar{X}$. Assume that $\bar{F}$ is Lipschitz
continuous near $\bar{x}$. Let $d\in H^{Cl}_{\bar{X}}(\bar{x})$ be
a refining direction for $\bar{x}$, associated with the refining
subsequence $\{x_k\}_{k\in K}$. Then,
$$\exists j\in\set{1,\ldots,m+1}: f_j^\circ(\bar{x};d)\geq
0.$$
\end{theorem}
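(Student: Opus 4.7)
The plan is to convert the dominance certificate available at each unsuccessful poll into a lower bound on a single-component difference quotient, and then to pass to the limit using the definition of the Clarke-Jahn generalized derivative. Let $K' \subseteq K$ denote the subsequence along which $d_k/\|d_k\|$ converges to $d$ and along which, by Definition~\ref{refdirection}, $x_k + \alpha_k d_k \in \bar{X}$ for all large $k$. Since the iterations in $K$ correspond to unsuccessful polls, the trial point $y_k := x_k + \alpha_k d_k$ is dominated by the list $L_k$, and the poll center $x_k$ itself belongs to $L_k$. Applying Proposition~\ref{prop_dominance} with $x = x_k$ and $y = y_k$ yields, for each sufficiently large $k \in K'$, an index $j_k \in \{1,\ldots,m+1\}$ such that
\[
f_{j_k}(x_k + \alpha_k d_k) \;>\; f_{j_k}(x_k) - \bar{\rho}(\alpha_k).
\]

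Next, by pigeonhole on the finite range $\{1,\ldots,m+1\}$, I extract a subsubsequence $K'' \subseteq K'$ on which $j_k$ equals a constant index $j$. Writing $t_k := \alpha_k \|d_k\|$ and $w_k := d_k/\|d_k\|$, the bounds $d_{min} \leq \|d_k\| \leq d_{max}$ together with $\alpha_k \to 0$ give $t_k \downarrow 0$ and $w_k \to d$. Dividing the preceding inequality by $t_k$,
\[
\frac{f_j(x_k + t_k w_k) - f_j(x_k)}{t_k} \;>\; -\frac{\bar{\rho}(\alpha_k)}{d_{min}\,\alpha_k}.
\]
The right-hand side tends to $0$: it is identically zero in the integer-lattice case ($\bar{\rho} \equiv 0$), and in the sufficient-decrease case it vanishes because $\rho(t)/t \to 0$ as $t \downarrow 0$ by the forcing-function property.

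To finish, I would replace $w_k$ by $d$ inside the quotient. Since $d \in H^{Cl}_{\bar{X}}(\bar{x})$, $x_k \to \bar{x} \in \bar{X}$, and $t_k \downarrow 0$, the hypertangent property guarantees $x_k + t_k d \in \bar{X}$ for all large $k \in K''$, which is what is needed to invoke the feasibility-restricted Clarke-Jahn limit superior. Using Lipschitz continuity of $f_j$ near $\bar{x}$ with some constant $L$,
\[
\left| \frac{f_j(x_k + t_k w_k) - f_j(x_k + t_k d)}{t_k} \right| \;\leq\; L\,\|w_k - d\| \;\longrightarrow\; 0.
\]
Combining the two displays and taking the limit superior along $K''$,
\[
0 \;\leq\; \limsup_{k \in K''}\,\frac{f_j(x_k + t_k d) - f_j(x_k)}{t_k} \;\leq\; f_j^{\circ}(\bar{x}; d),
\]
by the definition of the Clarke-Jahn generalized derivative, and this index $j$ is the one asserted by the theorem. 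The delicate point is the very last substitution: one must control simultaneously the direction mismatch (via Lipschitz continuity) and the feasibility of the perturbed points $x_k + t_k d$ (via the hypertangent assumption on $d$), without which the Clarke-Jahn upper bound would not apply; handling the two globalization regimes only affects the term $\bar{\rho}(\alpha_k)/t_k$, and is uniform across both cases thanks to the lower bound on $\|d_k\|$.
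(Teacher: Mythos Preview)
Your proof is correct and follows essentially the same route as the paper: invoke Proposition~\ref{prop_dominance} at each unsuccessful poll, extract a fixed component $j$ by pigeonhole, use Lipschitz continuity to swap $d_k/\|d_k\|$ for $d$, and kill the $\bar\rho(\alpha_k)/(\alpha_k\|d_k\|)$ term via the forcing-function property. The only cosmetic difference is the order of operations---the paper first bounds $f_j^\circ(\bar{x};d)$ below by the $\limsup$ of the difference quotient for a generic $j$ and extracts the fixed index at the end, whereas you fix $j$ first via $K''$ and then take the limit; both are equivalent, and your explicit use of the hypertangent property to justify $x_k + t_k d \in \bar{X}$ is a point the paper leaves implicit.
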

\begin{proof}
Since $\{x_k\}_{k\in K}$ is a refining subsequence converging to
$\bar{x}$, we have $\lim_{k\in K}x_k=\bar{x}$, $\lim_{k\in
K}\alpha_k=0$ and $k\in K$ is the index of an unsuccessful
iteration.

Consider $K'\subseteq K$ such that $\lim_{k\in
K'}\frac{d_k}{\|d_k\|}=d\in H_{\bar{X}}^{Cl}(\bar{x})$, with $d_k$
a poll direction used at iteration $k$. Let
$j\in\{1,\ldots,m+1\}$. Then,

\begin{equation*}
f_{j}^\circ (\bar{x};d) =\limsup_{x\rightarrow \bar{x}, x \in
\bar{X} \atop t \downarrow 0, x+t d \in \bar{X}} \dfrac{f_{j}(x+t
d)-f_{j}(x)}{t}\geq\limsup_{k\in K'}
\dfrac{f_j(x_k+\alpha_k\|d_k\|d)-f_j(x_k)}{\alpha_k\|d_k\|}=
\end{equation*}
\begin{equation*}
=\limsup_{k\in K'}\left( \dfrac{f_j(x_k+\alpha_k
d_k)-f_j(x_k)+\bar{\rho}(\alpha_k)}{\alpha_k\|d_k\|}+
\dfrac{f_j(x_k+\alpha_k \|d_k\|d)-f_j(x_k+\alpha_k
d_k)}{\alpha_k\|d_k\|}-\frac{\bar{\rho}(\alpha_k)}{\alpha_k\|d_k\|}\right)\geq
\end{equation*}
\begin{equation*}
\geq\limsup_{k\in K'} \dfrac{f_j(x_k+\alpha_k
d_k)-f_j(x_k)+\bar{\rho}(\alpha_k)}{\alpha_k\|d_k\|}+\liminf_{k\in
K'}\left( \dfrac{f_j(x_k+\alpha_k \|d_k\|d)-f_j(x_k+\alpha_k
d_k)}{\alpha_k\|d_k\|}-\frac{\bar{\rho}(\alpha_k)}{\alpha_k\|d_k\|}\right)
\end{equation*}
Since each $d_k$ is lower bounded by $d_{min}>0$, the definition
of $\bar{\rho}(\cdot)$ and the properties of the forcing function,
allow to establish $\lim_{k\in
K'}\frac{\bar{\rho}(\alpha_k)}{\alpha_k\|d_k\|}=0$.

Moreover, the Lipschitz continuity of $\bar{F}$ ensures that
$$\left|\dfrac{f_j(x_k+\alpha_k \|d_k\|d)-f_j(x_k+\alpha_k
d_k)}{\alpha_k\|d_k\|}\right|\leq
L\left\|d-\frac{d_k}{\|d_k\|}\right\|,$$ where $L$ represents the
maximum of the Lipschitz constants associated to each one of the
objective function components. Thus, the fact that $\lim_{k\in
K'}\frac{d_k}{\|d_k\|}=d$ ensures that
$$\lim_{k\in K'}\dfrac{f_j(x_k+\alpha_k \|d_k\|d)-f_j(x_k+\alpha_k
d_k)}{\alpha_k\|d_k\|}=0.$$

We then have,
$$f_{j}^\circ (\bar{x};d)\geq \limsup_{k\in K'} \dfrac{f_j(x_k+\alpha_k
d_k)-f_j(x_k)+\bar{\rho}(\alpha_k)}{\alpha_k\|d_k\|}.$$

Now, $k\in K'$ is an unsuccessful iteration and $x_k\in L$. By
Proposition~\ref{prop_dominance},
\begin{equation}\label{ineq}\exists j(k)\in\{1,\ldots,m+1\}:f_{j(k)}(x_k+\alpha_k d_k)>f(x_k)-\bar{\rho}(\alpha_k).\end{equation}
Since the number of components of the objective function is
finite, by passing to a subsequence $K''\subseteq K'$ that always
uses the same component of $\bar{F}$ we have the desired result.
\end{proof}

The convergence to Pareto-Clarke or Pareto-Clarke-KKT critical
points can be proven by imposing asymptotic density in the unit
sphere of the set of refining directions associated with
$\bar{x}$.

\begin{theorem}
\label{opt_f} Consider $\{x_k\}_{k\in K}$, a refining subsequence
generated by DMS-FILTER-IR, converging to the refined point
$\bar{x}\in \bar{X}$. Assume that
$H_{\bar{X}}^{Cl}(\bar{x})\neq\emptyset$ and that $\bar{F}$ is
Lipschitz continuous near $\bar{x}$. If the set of refining
directions for $\bar{x}$ is dense in $T^{Cl}_{\bar{X}}(\bar{x})$,
then $\bar{x}$ is a Pareto-Clarke critical point for
Problem~(\ref{MOO2}). If, in addition, $F$ is strictly
differentiable at $\bar{x}$, then this point is a
Pareto-Clarke-KKT critical point for Problem~(\ref{MOO2}).
\end{theorem}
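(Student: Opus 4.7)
The plan is to extend the conclusion of Theorem~\ref{teo_opt_f}, which applies only to refining directions lying in $H^{Cl}_{\bar{X}}(\bar{x})$, to every direction in $T^{Cl}_{\bar{X}}(\bar{x})$, by combining the density hypothesis, the proposition that writes Clarke--Jahn derivatives along tangent directions as limits of the same derivatives along hypertangent ones, and a pigeonhole step over the finite set of objective components.

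Fix an arbitrary $v\in T^{Cl}_{\bar{X}}(\bar{x})$. Since $H^{Cl}_{\bar{X}}(\bar{x})\neq\emptyset$, one has $H^{Cl}_{\bar{X}}(\bar{x})=\interior(T^{Cl}_{\bar{X}}(\bar{x}))$ and hence $T^{Cl}_{\bar{X}}(\bar{x})=\cl(H^{Cl}_{\bar{X}}(\bar{x}))$. Using the density of refining directions in $T^{Cl}_{\bar{X}}(\bar{x})$, I would build a sequence $\{d_n\}$ of refining directions with $d_n\to v$ and, via a diagonal construction, arrange that $d_n\in H^{Cl}_{\bar{X}}(\bar{x})$: if $v\in\interior(T^{Cl}_{\bar{X}}(\bar{x}))$ this is immediate by openness, and otherwise I would first invoke the proposition to approach $f_j^{\circ}(\bar{x};v)$ by $f_j^{\circ}(\bar{x};\tilde d)$ for $\tilde d\in H^{Cl}_{\bar{X}}(\bar{x})$ close to $v$, then approximate each such $\tilde d$ by a refining direction, using the Lipschitz continuity of $f_j$ to control the induced error in the Clarke--Jahn derivative.

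For each $d_n$, the hypotheses of Theorem~\ref{teo_opt_f} are satisfied, yielding an index $j(n)\in\{1,\ldots,m+1\}$ with $f_{j(n)}^{\circ}(\bar{x};d_n)\geq 0$. The index set being finite, the pigeonhole principle provides a subsequence along which $j(n)\equiv j$ is constant, so $f_j^{\circ}(\bar{x};d_n)\geq 0$ throughout that subsequence. Passing to the limit through the proposition,
\begin{equation*}
f_j^{\circ}(\bar{x};v)=\lim_{n\to\infty} f_j^{\circ}(\bar{x};d_n)\geq 0,
\end{equation*}
and since $v\in T^{Cl}_{\bar{X}}(\bar{x})$ was arbitrary, $\bar{x}$ is a Pareto--Clarke critical point for Problem~(\ref{MOO2}). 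The Pareto--Clarke--KKT conclusion then follows because, under strict differentiability, the Clarke--Jahn derivative reduces to $f_j^{\circ}(\bar{x};v)=\nabla f_j(\bar{x})^{\top}v$, so the previous inequality rewrites directly as the required KKT condition.

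The main obstacle I anticipate is the boundary case $v\in T^{Cl}_{\bar{X}}(\bar{x})\setminus H^{Cl}_{\bar{X}}(\bar{x})$: the refining directions dense in $T^{Cl}_{\bar{X}}(\bar{x})$ need not themselves be hypertangent, so Theorem~\ref{teo_opt_f} is not directly applicable, and interposing a hypertangent intermediary via the proposition together with Lipschitz control on $f_j$ is the natural device to bridge that gap. A secondary bookkeeping point is keeping the pigeonhole extraction compatible with the diagonal construction of $d_n$, so that the final limit argument goes through along a single subsequence with a fixed index $j$.
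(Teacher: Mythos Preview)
Your proposal is correct and follows essentially the same approach the paper points to: the paper's own proof is a one-line reference to Theorem~4.9 in~\cite{custodio2011}, and the argument you outline (apply Theorem~\ref{teo_opt_f} to refining directions in the hypertangent cone, pigeonhole on the finite index set, and extend to the full tangent cone via the proposition on Clarke--Jahn derivatives) is exactly that argument spelled out. Your identification of the boundary case $v\in T^{Cl}_{\bar{X}}(\bar{x})\setminus H^{Cl}_{\bar{X}}(\bar{x})$ as the only delicate point, and the diagonal/intermediary device you propose for it, are the standard way this is handled and match the referenced proof.
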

\begin{proof}
The proof follows from Theorem~\ref{teo_opt_f}, using similar
arguments to the ones of Theorem 4.9 in~\cite{custodio2011}.
\end{proof}

The previous theorem states that DMS-FILTER-IR generates a
Pareto-Clarke critical point for Problem~(\ref{MOO2}). However, if
$\bar{x}$ is a local efficient point of Problem~(\ref{MOO2}) and
$h(\bar{x})=0$, then $\bar{x}$ is a local efficient point for
Problem~(\ref{MOO}).

Thus, a question that arises is if the algorithm is indeed able to
generate feasible points, when initialized from infeasible ones.
The next theorem attempts to establish some conditions that
provide an answer to this question.

\begin{theorem}
Let Assumption~\ref{assumption1} hold. Assume that $h$ is a
continuous function and consider $\{x_k\}_{k\in K}$, an infeasible
refining subsequence, such that for each $k\in K$, $x_k$ is used
at a successful inexact restoration step in DMS-FILTER-IR. Then,
the algorithm generates a limit point $\bar{y}\in \Upsilon$.
\end{theorem}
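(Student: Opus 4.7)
The plan is to exhibit the desired feasible limit point as a cluster point of the sequence of inexact restoration trial points $\{y_k^*\}_{k\in K}$ produced at each successful restoration step associated with the iterate $x_k$, rather than as a cluster point of the (infeasible) iterates $\{x_k\}_{k\in K}$ themselves. The argument has two essentially independent ingredients: the forcing effect of $\xi(\cdot)$ on the constraint violation, and a compactness argument to extract a convergent subsequence.

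First, I would exploit the restoration constraint directly. For each $k\in K$, since the restoration step is successful, there is $y_k^*\in X$ satisfying $h(y_k^*)\leq \xi(\alpha_k)\,h(x_k)\leq \xi(\alpha_k)\,h_{\tt max}$. Because $\{x_k\}_{k\in K}$ is refining, $\alpha_k\to 0$, and the standing hypothesis that $\xi(t)\to 0$ as $t\downarrow 0$ yields $h(y_k^*)\to 0$ along $K$.

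Second, I would establish boundedness of $\{y_k^*\}_{k\in K}$ in order to extract a convergent subsequence. Here the nonemptiness of $\Upsilon$ is crucial: fix any $\tilde y\in\Upsilon$, so that $h(\tilde y)=0\leq \xi(\alpha_k)h(x_k)$, making $\tilde y$ admissible for the restoration problem. By the minimality defining $y_k^*$,
$$\|y_k^*-x_k\|\ \leq\ \|\tilde y-x_k\|.$$
The iterates $x_k$ are drawn from the list, whose members can be shown (through the usual nondominance argument tracing back to $x_0$, exactly as in the DMS analysis) to lie in the compact set $\mathcal S$ of Assumption~\ref{assumption1}. Thus $\{x_k\}_{k\in K}$ is bounded and so is $\{y_k^*\}_{k\in K}$, allowing the extraction of a subsequence $\{y_k^*\}_{k\in K'}\to \bar y$ with $K'\subseteq K$.

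Finally, I would pass to the limit. Continuity of $h$ gives $h(\bar y)=\lim_{k\in K'}h(y_k^*)=0$, so $\bar y\in\Omega$. Each $y_k^*$ belongs to $X$, and closedness of $X$ (which is implicit in the compactness of $\mathcal S$ under Assumption~\ref{assumption1}) yields $\bar y\in X$, so $\bar y\in X\cap\Omega=\Upsilon$, as required. The main obstacle is the boundedness step: it is the only place where the nonemptiness of $\Upsilon$ is needed in order to control where the restoration points live, and it requires some care to argue that the iterates $x_k$ themselves remain in the compact set $\mathcal S$, despite being infeasible with respect to the relaxable constraints.
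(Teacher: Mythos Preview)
Your proposal is correct and follows the same overall scheme as the paper: show $h(y_k^*)\to 0$ via the restoration constraint together with $\alpha_k\to 0$, extract a convergent subsequence, and pass to the limit using continuity of $h$.

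The one substantive difference is in the compactness step. The paper exploits the ``successful'' hypothesis directly: since $y_k^*$ is accepted into the list as a nondominated point, the standard DMS argument places $y_k^*$ itself in the compact set $\mathcal{S}$ of Assumption~\ref{assumption1}, so a convergent subsequence exists and its limit lies in $\mathcal{S}\subseteq X$ automatically. You instead bound $\{y_k^*\}$ via the optimality of $y_k^*$ in the restoration subproblem, comparing against a fixed $\tilde y\in\Upsilon$ and using boundedness of $\{x_k\}$. This is a valid boundedness argument, but it leaves you needing closedness of $X$ to conclude $\bar y\in X$; that closedness is \emph{not} a consequence of the compactness of $\mathcal{S}$ (take $X$ open with $\mathcal{S}$ a compact subset), so this is a small extra assumption your route requires. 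The paper's approach avoids it, and in fact renders the optimality argument unnecessary: once you note that a successful restoration step means $y_k^*$ was added to the list, the same reasoning that places $x_k$ in $\mathcal{S}$ places $y_k^*$ there too, giving both boundedness and $\bar y\in X$ in one stroke.
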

\begin{proof}
For each $k\in K$, $x_k$ was used at a successful inexact
restoration step. Thus, there is $y_k^*\in X$ such that $y_k^*$ is
nondominated and $0\leq h(y_k^*)\leq \xi(\alpha_k)h(x_k)\leq
h_{\tt max}$.

Since $\{x_k\}_{k\in K}$ is a refining subsequence, $\lim_{k\in
K}\alpha_k=0$. Considering the properties of $\xi(\cdot)$ and the
boundedness of $h$, we can conclude $\lim_{k\in K}h(y_k^*)=0$.
Assumption~\ref{assumption1} allows to consider $K'\subseteq K$
such that $\lim_{k\in K}y_k^*=\bar{y}$ and the continuity of $h$
establishes $h(\bar{y})=0$, meaning that $\bar{y}\in \Upsilon$.
\end{proof}

The following result links sequences of points generated by
DMS-FILTER-IR to Problem~(\ref{MOO}). For establishing it, we will
assume that globalization is based on the use of integer lattices.
\begin{corollary}\label{corol:feasible}
Consider $\{x_k\}_{k\in K}$ a feasible refining subsequence,
converging to the refined point $\bar{x}\in \Upsilon$, generated
by algorithm DMS-FILTER-IR, when using a globalization strategy
based on integer lattices. Assume that $\bar{F}$ is Lipschitz
continuous near $\bar{x}$. Let $d\in H^{Cl}_{\Upsilon}(\bar{x})$
be a refining direction for $\bar{x}$, associated with the
refining subsequence $\{x_k\}_{k\in K}$. Then,
$$\exists j\in\set{1,\ldots,m}: f_j^\circ(\bar{x};d)\geq
0.$$
\end{corollary}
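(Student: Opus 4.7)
The plan is to mirror the argument of Theorem~\ref{teo_opt_f} with $\Upsilon$ in place of $\bar{X}$, and to use the feasibility of both the refining subsequence $\{x_k\}_{k\in K}$ and of the perturbed iterates $x_k+\alpha_k d_k$ in order to exclude the index $j=m+1$ from the conclusion of Theorem~\ref{teo_opt_f}.

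First I would extract a subsequence $K'\subseteq K$ along which $d_k/\|d_k\|\to d$. Since $d\in H^{Cl}_{\Upsilon}(\bar{x})$, $x_k\in\Upsilon$, $x_k\to\bar{x}$, and $\alpha_k\|d_k\|\to 0$, the very definition of the hypertangent cone guarantees $x_k+\alpha_k d_k\in\Upsilon$ (and likewise $x_k+\alpha_k\|d_k\|d\in\Upsilon$) for all sufficiently large $k\in K'$. This validates computing the Clarke--Jahn generalized derivative $f_j^\circ(\bar{x};d)$ as a limsup over points that stay in $\Upsilon$ after a step in direction $d$, and the Lipschitz continuity of $\bar{F}$ enables the same decomposition as in Theorem~\ref{teo_opt_f}, yielding
$$f_j^\circ(\bar{x};d)\geq\limsup_{k\in K'}\frac{f_j(x_k+\alpha_k d_k)-f_j(x_k)+\bar{\rho}(\alpha_k)}{\alpha_k\|d_k\|},$$
with $\bar{\rho}(\cdot)\equiv 0$ under the integer-lattice globalization hypothesis.

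The key new step is the exclusion of $j=m+1$. For each unsuccessful $k\in K'$, Proposition~\ref{prop_dominance} provides an index $j(k)\in\{1,\ldots,m+1\}$ with $f_{j(k)}(x_k+\alpha_k d_k)>f_{j(k)}(x_k)$. But the feasibility of $x_k$ yields $h(x_k)=f_{m+1}(x_k)=0$, and the feasibility of $x_k+\alpha_k d_k$ established above yields $f_{m+1}(x_k+\alpha_k d_k)=0$, so the choice $j(k)=m+1$ would give $0>0$, a contradiction. Hence $j(k)\in\{1,\ldots,m\}$ for all large $k\in K'$; extracting a further subsequence on which $j(k)$ is constantly equal to some $j\in\{1,\ldots,m\}$ and concluding as in Theorem~\ref{teo_opt_f} produces $f_j^\circ(\bar{x};d)\geq 0$.

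The delicate point is ensuring feasibility of the perturbed iterates with respect to $\Upsilon$ rather than merely to $\bar{X}$, which is exactly what the stronger hypothesis $d\in H^{Cl}_{\Upsilon}(\bar{x})$ supplies. The integer-lattice globalization is equally essential, since it forces the dominance inequality from Proposition~\ref{prop_dominance} to be strict and thus makes the $0>0$ contradiction available; under a sufficient-decrease strategy the inequality would relax to $0>-\rho(\alpha_k)$, which is consistent, and $j=m+1$ could not be excluded by this argument alone.
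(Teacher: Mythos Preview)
Your proposal is correct and matches the paper's own proof, which simply says to rerun the argument of Theorem~\ref{teo_opt_f} with $d\in H^{Cl}_{\Upsilon}(\bar{x})$ and observe that $h(x_k+\alpha_k d_k)=0=h(x_k)$ for large $k$, forcing inequality~\eqref{ineq} to hold for some $j\in\{1,\ldots,m\}$. Your write-up is in fact more explicit than the paper's on two points the paper leaves implicit: (i) that membership of $d$ in the hypertangent cone to $\Upsilon$ is precisely what delivers $x_k+\alpha_k d_k\in\Upsilon$ for large $k$, and (ii) that the integer-lattice hypothesis is what makes the exclusion of $j=m+1$ work, since under sufficient decrease the inequality $0>-\rho(\alpha_k)$ would be vacuously satisfied.
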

\begin{proof}
The proof follows directly from the proof of
Theorem~\ref{teo_opt_f}, considering $d\in
H^{Cl}_{\Upsilon}(\bar{x})$ and noting that for $k$ sufficiently
large $h(x_k+\alpha_k d_k)=0=h(x_k)$. Thus,
inequality~(\ref{ineq}) needs to hold for $j\in\set{1,\ldots,m}$.
\end{proof}

In this situation, $\bar{x}$ is a Pareto-Clark critical point for
Problem~(\ref{MOO}).

\begin{corollary}
Consider $\{x_k\}_{k\in K}$, a feasible refining subsequence
generated by DMS-FILTER-IR, converging to the refined point
$\bar{x}\in \Upsilon$. Assume that
$H_{\Upsilon}^{Cl}(\bar{x})\neq\emptyset$ and that $\bar{F}$ is
Lipschitz continuous near $\bar{x}$. If the set of refining
directions for $\bar{x}$ is dense in $T^{Cl}_{\Upsilon}(\bar{x})$,
then $\bar{x}$ is a Pareto-Clarke critical point for
Problem~(\ref{MOO}). If, in addition, $F$ is strictly
differentiable at $\bar{x}$, then this point is a
Pareto-Clarke-KKT critical point for Problem~(\ref{MOO}).
\end{corollary}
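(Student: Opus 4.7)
The plan is to deduce this corollary from Corollary~\ref{corol:feasible} by a standard density-and-limit argument, mirroring the passage from Theorem~\ref{teo_opt_f} to Theorem~\ref{opt_f}. I would fix an arbitrary direction $v\in T^{Cl}_{\Upsilon}(\bar{x})$ and, invoking the hypothesis that the set of refining directions is dense in $T^{Cl}_{\Upsilon}(\bar{x})$ (on the unit sphere), construct a sequence of refining directions $\{v_\ell\}\subset H^{Cl}_{\Upsilon}(\bar{x})$ converging to $v/\|v\|$. The assumption $H^{Cl}_{\Upsilon}(\bar{x})\neq\emptyset$ is what makes this construction legitimate, together with the earlier proposition allowing the Clarke--Jahn derivative along tangent directions to be recovered as a limit of derivatives along hypertangent directions.

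For each $\ell$, Corollary~\ref{corol:feasible} (which applies since each $v_\ell$ is a refining direction lying in $H^{Cl}_{\Upsilon}(\bar{x})$) supplies an index $j(\ell)\in\{1,\ldots,m\}$ with $f_{j(\ell)}^\circ(\bar{x};v_\ell)\geq 0$. Because $\{1,\ldots,m\}$ is finite, a pigeonhole extraction yields a subsequence (still denoted $\{v_\ell\}$) and a single index $j^\star\in\{1,\ldots,m\}$ such that
\begin{equation*}
f_{j^\star}^\circ(\bar{x};v_\ell)\geq 0 \quad \text{for all } \ell.
\end{equation*}
Appealing to the proposition expressing $f_{j^\star}^\circ(\bar{x};v/\|v\|)$ as $\lim_{d\to v/\|v\|,\, d\in H^{Cl}_{\Upsilon}(\bar{x})} f_{j^\star}^\circ(\bar{x};d)$, which uses the Lipschitz continuity of $\bar{F}$ near $\bar{x}$, I would pass to the limit $\ell\to\infty$ to obtain $f_{j^\star}^\circ(\bar{x};v/\|v\|)\geq 0$. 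Positive homogeneity of the Clarke--Jahn derivative then gives $f_{j^\star}^\circ(\bar{x};v)\geq 0$, which is exactly Pareto--Clarke criticality of $\bar{x}$ for Problem~(\ref{MOO}).

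For the differentiable part, when $F$ is strictly differentiable at $\bar{x}$, the Clarke--Jahn derivative reduces to the classical directional derivative, $f_{j^\star}^\circ(\bar{x};v)=\nabla f_{j^\star}(\bar{x})^\top v$, so the inequality above immediately translates into $\nabla f_{j^\star}(\bar{x})^\top v\geq 0$, yielding the Pareto--Clarke--KKT condition. The one point requiring care is the ordering of operations in the pigeonhole step: the component $j(\ell)$ depends a priori on $\ell$, so the extraction of a subsequence with a constant index $j^\star$ must be performed \emph{before} taking the limit in $\ell$; otherwise one could not identify a single objective component certifying the nonexistence of a common descent direction at $v$.
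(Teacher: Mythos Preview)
Your proposal is correct and follows precisely the route the paper intends: it invokes Corollary~\ref{corol:feasible} on refining directions in $H^{Cl}_{\Upsilon}(\bar{x})$, then passes to the tangent cone via density and the proposition on Clarke--Jahn derivatives, with a pigeonhole extraction to fix the objective index---exactly the ``arguments of Theorem~4.9 in~\cite{custodio2011}'' the paper cites. The only point to keep in mind is that Corollary~\ref{corol:feasible} is stated under the integer-lattice globalization strategy, so your argument inherits that standing hypothesis just as the paper's own proof does.
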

\begin{proof}
The proof is a direct consequence of
Corollary~\ref{corol:feasible}, using similar arguments to the
ones of Theorem 4.9 in~\cite{custodio2011}.
\end{proof}

By using a filter-based approach, we have overcome the difficulty
of applying directional direct search to multiobjective
constrained optimization problems, when a feasible initialization
is not available (regarding the relaxable constraints). The
incorporation of a inexact restoration step potentiates
feasibility. The next section will illustrate the numerical
competitiveness of the proposed approach.


\section{Implementation Details}
\label{sec:implementation}

Differently from DMulti-MADS-PB~\cite{bigeon2022}, DMS-FILTER-IR
selects a single point at each iteration, to be explored at the
search and, possibly, at the inexact restoration and/or pool
steps. The decision on using a feasible or infeasible iterate
point always attempts to promote feasibility, regarding the
relaxable constraints (unrelaxable constraints are always
satisfied by the points in the list, from where the iterate point
will be selected).

The algorithm switches to an infeasible iterate point if the
current feasible iterate point only generates infeasible points.
At the next iteration, the iterate point will be selected from the
nondominated points in the list that do not satisfy the relaxable
constraints. Once that an infeasible iterate point generates at
least one feasible point, a feasible point will be selected as
iterate point for the next iteration.

Suppose that we are at one iteration where the iterate point
should be feasible. For selecting it from all the feasible points
in the iterate list, we use the concept of the most isolated
point. For each component of the objective function
$i=1,\ldots,m$, the feasible points in $L_k$ are selected and
ordered by increasing function value:
$$f_i(x^1)\leq f_i(x^2)\leq\ldots\leq f_i(x^{m_F}),$$ where $m_F$
denotes the total number of feasible points in $L_k$.

Then, for each component of the objective function $f_i$,
$i=1,\ldots,m$, and for each feasible point $x^j$ in $L_k$, with
$j=1,\ldots,m_F$, the following indicator is computed:
\begin{equation*}
\delta_{i}\left(x^{j}\right)= \begin{cases}
f_{i}\left(x^{2}\right)-f_{i}\left(x^{1}\right) & \text {, if }
j=1
\\ f_{i}\left(x^{m_F}\right)-f_{i}\left(x^{m_F-1}\right) & \text {, if } j=m_F \\
\dfrac{f_{i}\left(x^{j+1}\right)-f_{i}\left(x^{j-1}\right)}{2} &
\text {, otherwise. }\end{cases}
\end{equation*}
The most isolated point corresponds to the maximum value of
$\gamma_j$, that is, $$x_k \in\displaystyle
\argmax_{j=1,\ldots,m_F} \gamma_j,$$ where
\begin{equation}
\gamma_{j} = \displaystyle \dfrac{1}{m}\sum_{i=1}^{m}
\delta_i(x^j).
\end{equation}

When the iterate point should be infeasible, two different
criteria are used for its selection, depending on having at least
one feasible point in the list or not. In the latter situation,
the point selected corresponds to the most promising infeasible
point is the list, in terms of restoring the feasibility. Thus,
the point with the smallest value for the aggregated constraint
violation function $h$ will be chosen.

Now, suppose that there is at least one feasible point in the
list. The fact that the iterate point should be infeasible means
that at the last iteration a feasible iterate point only generated
infeasible ones. Thus, we want to try to restore feasibility close
to the region that was being explored. A closed ball centered on
the feasible iterate point, of radius equal to $\eta\,\alpha_k
\max_{d\in D_{k-1}}\|d\|$, with $\eta\geq \frac{1}{\beta_2}$, is
considered and the infeasible point in the list, belonging to this
ball, with the lowest value for the aggregated constraint
violation function $h$ will be selected as iterate point.

DMS-FILTER-IR was implemented in \texttt{Matlab}, keeping the
default settings of DMS. Thus, the step size was initialized as
$1$, kept constant at successful iterations ($\gamma = 1$), and
halved at unsuccessful ones ($\beta_1=\beta_2 = 0.5$). No search
step was implemented.

The aggregated violation function was defined as:
\begin{equation*}
h(x)=\|C(x)_+\|_2^2=\displaystyle\sum_{i=1}^{p}\max\set{0,c_i(x)}^2.
\end{equation*} Regarding the maximum violation allowed, $h_{\tt max}$, it
depends on the initialization. If there are any infeasible points
in the list, $h_{\tt max}$ will be set equal to the largest of the
existing values of $h$. Otherwise, it will be set equal to the
maximum between $10$ and half of the number of the relaxable
constraints.

Many options can be considered for defining a function complying
with the requirements of $\xi(\cdot)$, to be used in the inexact
restoration step. In the numerical implementation, we used the
function $\xi(\alpha)=(\frac{\alpha}{2})^2$. Note that this
function is continuous, $\xi(\alpha)\to 0$ when $\alpha\downarrow
0$ and, considering the initialization and the strategy to update
the step size, $0<\xi(\alpha)<1$. The inexact restoration problems
were solved with the \texttt{Matlab} function
$\texttt{fmincon.m}$. Through the numerical section, for any
solver, feasibility is assumed to be achieved when there is an
aggregated violation of the relaxable constraints less than
$10^{-5}$.

If the poll step is performed, a complete polling approach is
adopted, evaluating all the points corresponding to directions in
the positive spanning set.


\section{Numerical Experiments}
\label{sec:numerical_experiments} This section is devoted to the
numerical experiments performed with DMS-FILTER-IR, illustrating
its numerical behavior by comparison with the original DMS
algorithm, when solving constrained problems, or with other
state-of-art solvers. A first subsection will describe the metrics
used in the performance assessment, followed by a new subsection
detailing the problem collection. The last two subsections
illustrate the numerical behavior of the proposed algorithm. All
tests were performed in a laptop with a 11th Gen
Intel{\small\textregistered}\ Core(TM) i7-1165G7 processor, at
2.80GHz, with 16GB of RAM memory, using Windows 11 with 64 bits.

\subsection{Performance Assessment}

In order to evaluate the numerical performance of the algorithm,
we resource to performance profiles, a tool introduced by Dolan
and Mor\'e~\cite{Dolan_More_2002}, for single objective nonlinear
optimization. This tool allows to concurrently assess the
numerical performance of multiple solvers, for different metrics.
The performance of a solver $s$, belonging to a set of solvers
$S$, on a particular set of problems $P$ is represented by the
cumulative function:
\begin{equation*}
\rho_s(\tau)=\dfrac{1}{|P|}\left|\left\{p\in P\mid t_{p,s}\leq
\tau \min\set{t_{p,s}: s\in S}\right\}\right|,
\end{equation*}
where $\tau\geq 1$ and $t_{p,s}$ represents the value of the
selected metric, obtained by solver $s\in S$ on problem $p\in P$.
It is assumed that lower values of $ t_{p,s}$ correspond to a
better values for the metric.

Higher values of $\rho_s(\tau)$ represent a better numerical
performance for solver $s$. Specifically, the solver with the
highest $\rho_s(1)$ value is considered the most efficient and the
solver with the greatest $\rho_s(\tau)$ value for large values of
$\tau$ is regarded as the most robust.

As metrics, purity, hypervolume, and the spread metrics $\Gamma$
and $\Delta$ were selected. The percentage of nondominated points
generated by a given solver is measured by purity:
$$\bar{t}_{p,s}={Pur}_{p,s}=\frac{|F_{p,s} \cap F_{p}|}{|F_{p,s}|},$$
where $ F_{p,s} $ represents the approximation to the Pareto front
of problem $p$ computed by solver $s$ and $ F_{p} $ is a reference
Pareto front for problem $p$, computed by considering the union of
the Pareto approximations corresponding to all solvers, $ \cup_{s
\in S} F_{p,s} $, and removing from it all the dominated
points~\cite{custodio2011}.

Hypervolume~\cite{EZitzler_et_al_2003}, additionally to
nondominance, encompasses the notion of spread by quantifying the
volume of the region dominated by the current approximation of the
Pareto front and limited by a reference point $U_p \in
\mathbb{R}^m$, dominated by all points belonging to the different
approximations computed for the Pareto front of problem $p\in P$
by all solvers tested. In a formal way:

$$\bar{t}_{p,s}=HV_{p,s} =  Vol\{y \in \mathbb{R}^m\,| \, y \le U_p \wedge \exists x \in F_{p,s} : x \le y\} = Vol \left(\bigcup_{x \in F_{p,s}} [x, U_p]\right),$$
where $Vol(.)$ denotes the Lebesgue measure of a $m$-dimensional
set of points and $[x, U_p]$ denotes the interval box with lower
corner $x$ and upper corner $U_p$.

Since larger values of purity and hypervolume indicate a better
performance, the inverse value of each one of the metrics was used
($t_{p,s}=1/ \bar{t}_{p,s}$), when computing the associated
performance profiles.

Lastly, for a direct evaluation of spread along the Pareto front,
we incorporated two supplementary metrics: the $\Gamma$ metric,
quantifying the magnitude of the largest gap in the computed
Pareto front approximation, and the $\Delta$ metric, which gauges
the evenness of the distribution of nondominated points within the
generated approximation.

Consider that solver $s \in S$ computed for problem $p \in P$ an
approximation to the Pareto front with points
$x^1,x^2,\ldots,x^N$, to which we add the so-called extreme
points, $x^0$ and $x^{N+1}$, corresponding to the points with the
best and worst values for each component of the objective
function. The metric $\Gamma$ can be computed as:
\begin{equation}
 \Gamma_{p,s} \; = \; \max_{j \in
\{1,\dots,q\}}\left(\max_{i\in
\{0,\dots,N\}}\{\delta_{j,i}\}\right),
\end{equation}
\noindent where $\delta_{j,i}=f_{j}(x^{i+1})-f_{j}(x^i)$, assuming
that the objective function values have been sorted by increasing
order for each objective function component~$j$. The metric
$\Delta$~\cite{KDeb_et_al_2002} is computed as:
\begin{equation}\displaystyle \Delta_{p,s} \;
= \;
\max_{j\in\{1,\dots,q\}}\left(\frac{\delta_{j,0}+\delta_{j,N}+\sum_{i=1}^{N-1}|\delta_{j,i}-
\bar{\delta}_j|}{\delta_{j,0}+\delta_{j,N}+(N-1)\bar{\delta}_j}\right),
\end{equation}
where $\bar{\delta}_j$, for $j=1,\ldots,q$, represents the average
of the distances $\delta_{j,i}$, $i=1,\dots,N-1$.

\subsection{Problem Collection}
Liuzzi et al.~\cite{liuzzi2016} defined a collection of
constrained problems by coupling a subset of the bound constrained
problems provided in Cust\'odio et al.~\cite{custodio2011} with
six families of constraints proposed in~\cite{karmitsa}. All the
bound problems in~\cite{custodio2011} with $n\geq 3$ variables
were selected, resulting in $51$ bound constrained problems. A set
of $306$ constrained problems was generated by adding to each
problem the following six families of nonlinear constraints (the
suggested initialization is denoted by $x^0$):
\begin{eqnarray*}
g_j^1(x) &=& (3-2x_{j+1})x_{j+1}-x_j-2x_{j+2}+1,\quad j=1,\ldots,p,\quad p=n-2\\
& & x^0_i=1,\quad i=1,\ldots,n\nonumber\\
g_j^2(x) &=& (3-2x_{j+1})x_{j+1}-x_j-2x_{j+2}+2.5,\quad j=1,\ldots,p,\quad p=n-2\\
& & x^0_i=2,\quad i=1,\ldots,n\nonumber\\
g_j^3(x) &=& x_j^2+x_{j+1}^2+x_j x_{j+1}-2x_j-2x_{j+1}+1,\quad j=1,\ldots,p,\quad p=n-1\\
& & x^0_i=0.5\quad i=1,\ldots,n\nonumber\\
g_j^4(x) &=& x_j^2+x_{j+1}^2+x_j x_{j+1}-1,\quad j=1,\ldots,p,\quad p=n-1\\
& & x^0_i=0,\quad i=1,\ldots,n\nonumber\\
g_j^5(x) &=& (3-0.5x_{j+1})x_{j+1}-x_j-2x_{j+2}+1,\quad j=1,\ldots,p,\quad p=n-2,\\
& & x^0_i=2,\quad i=1,\ldots,n\nonumber\\
g_j^6(x) &=& \displaystyle \sum_{i=1}^{n-2} ((3-0.50x_{i+1})x_{i+1}-x_i-2x_{i+2}+1),\quad j=p,\quad p=1\\
& & x^0_i=2,\quad i=1,\ldots,n\nonumber
\end{eqnarray*}
In theory, DMS is developed for multiobjective derivative-free
optimization, regardless of the number of components in the
objective function. However, the numerical results reported
in~\cite{custodio2011} respect to biojective and triobjective
problems (there is an exception of one problem with four
components in the objective function, from the set of 100 bound
constrained problems considered). In fact, it is common knowledge
in the multiobjective optimization community that addressing
problems with more than three components in the objective function
requires special techniques, falling on the specific domain of
Many-objective Optimization~\cite{emmerich2018}. The focus of this
work is not Many-objective Optimization problems. Considering that
the aggregated penalization function will be an additional
component of the objective function, we restricted the set of
$306$ constrained problems to biobjective ones, in a total of
$156$. The problems and their dimensions are given in
Table~\ref{biobjective_problems}.

\begin{table}[h!]
\centering
\begin{tabular}{@{}llllll@{}}
\toprule \multicolumn{1}{c}{\textbf{Problem}} &
\multicolumn{1}{c}{\textbf{n}} &
\multicolumn{1}{c}{\textbf{Problem}} &
\multicolumn{1}{c}{\textbf{n}} &
\multicolumn{1}{c}{\textbf{Problem}} &
\multicolumn{1}{c}{\textbf{n}} \\ \midrule
CL1 & 4 & L2ZDT6 & 10 & QV1 & 10 \\
DPAM1& 10 & L3ZDT1 & 30 & ZDT1 & 30\\
FES1 & 10 & L3ZDT2 & 30 & ZDT2 & 30\\
Kursawe& 3 & L3ZDT3 & 30 & ZDT3& 30\\
L1ZDT4& 10& L3ZDT4& 30 & ZDT4  & 10\\
L2ZDT1& 30& L3ZDT6 & 10 & ZDT6& 10\\
L2ZDT2 & 30 & MOP2 & 4 & SK2 & 4 \\
L2ZDT3 & 30 & MOP4 & 3 & TKLY1 & 4 \\
L2ZDT4 & 30 & OKA2 & 3 & &\\
\bottomrule
\end{tabular}
\caption{The biobjective test set used in the numerical
experiments ($n$ denotes the number of variables).}
\label{biobjective_problems}
\end{table}

DMS requires a feasible initialization. However, not all the
points $x^0$ provided by Karmitsa~\cite{karmitsa} satisfy the
bounds constraints. Thus, from the $156$ biobjective problems
considered, we retained only the $93$ for which a feasible
initialization was available. The final test set comprise $93$
nonlinearly constrained biobjective problems, with $n\in [3,30]$
and $p\in [1,29]$. We assumed the nonlinear constraints as being
relaxable, corresponding the unrelaxable constraints to bounds.

\subsection{Positive Spanning Sets}

The convergence of both DMS and DMS-FILTER-IR is established under
the assumption of asymptotic density of the sets of directions
used by the algorithm, during the optimization process. However,
when the budget of function evaluations is limited, which is often
the case when the function is expensive to evaluate, this density
is never accomplished. Coordinate search has the perfect geometry
for bound constrained problems. In fact, this was the type of
directions used to obtain the numerical results reported
in~\cite{custodio2011}, illustrating the numerical performance of
DMS.

Considering that in DMS-FILTER-IR the nonlinear constraints are
addressed by the filter approach, using the aggregated violation
function and reducing the problem to a bound constrained problem,
it would be interesting to compare DMS and DMS-FILTER-IR when
using coordinate search as set of directions or when resorting to
an asymptotic dense set of directions, built using the technique
proposed in~\cite{MAAbramson_etal_2009}, based on Halton
sequences.

DMS addresses constraints with an extreme barrier approach,
denoted here by DMS-EB, only evaluating feasible points and
requiring a feasible initialization. Thus, the feasible point
$x^0$ given in~\cite{karmitsa} was used for initialization.
DMS-FILTER-IR allows infeasible points, with respect to the
relaxable constraints. Therefore, DMS-FILTER-IR was initialized
with $n$-points equally spaced in the line segment, joining the
variable upper and lower bounds.

Figures~\ref{performance_DMS 5000funceval}
and~\ref{performance_DMS-FILTER-IR 5000funceval} report the
results obtained for DMS and DMS-FILTER-IR, respectively. A
maximum budget of $5000$ function evaluations was allowed, jointly
with a minimum step size of $10^{-3}$, for all the points in the
list.

\begin{figure}[h!]
\centering \subfigure[Purity
\label{purity_500}]{\includegraphics[scale=0.4]{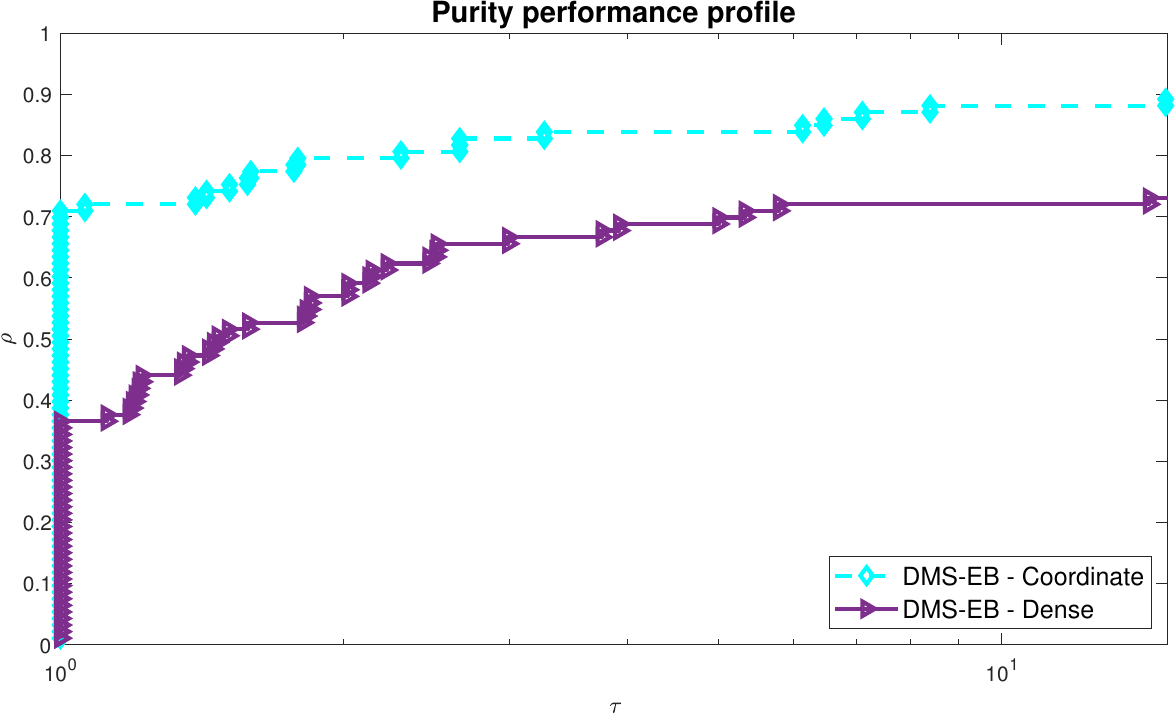}}
\subfigure[Hypervolume
\label{hypervolume_500}]{\includegraphics[scale=0.4]{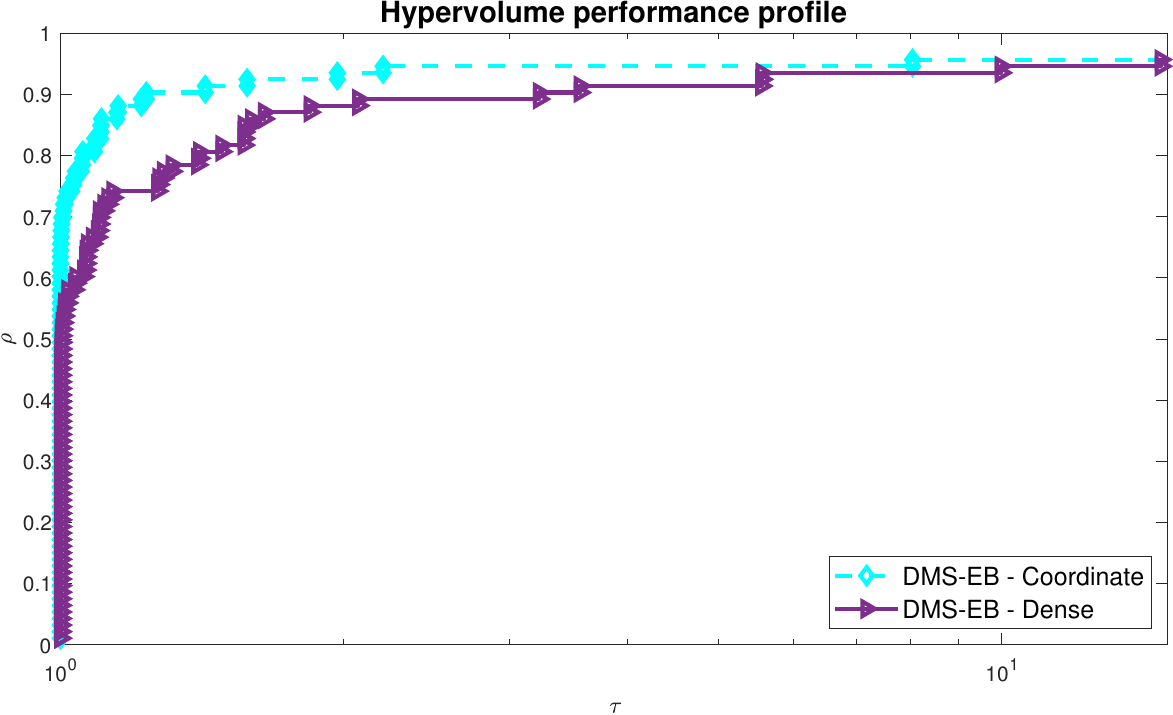}}
\subfigure[Spread Gamma
\label{gamma_500}]{\includegraphics[scale=0.4]{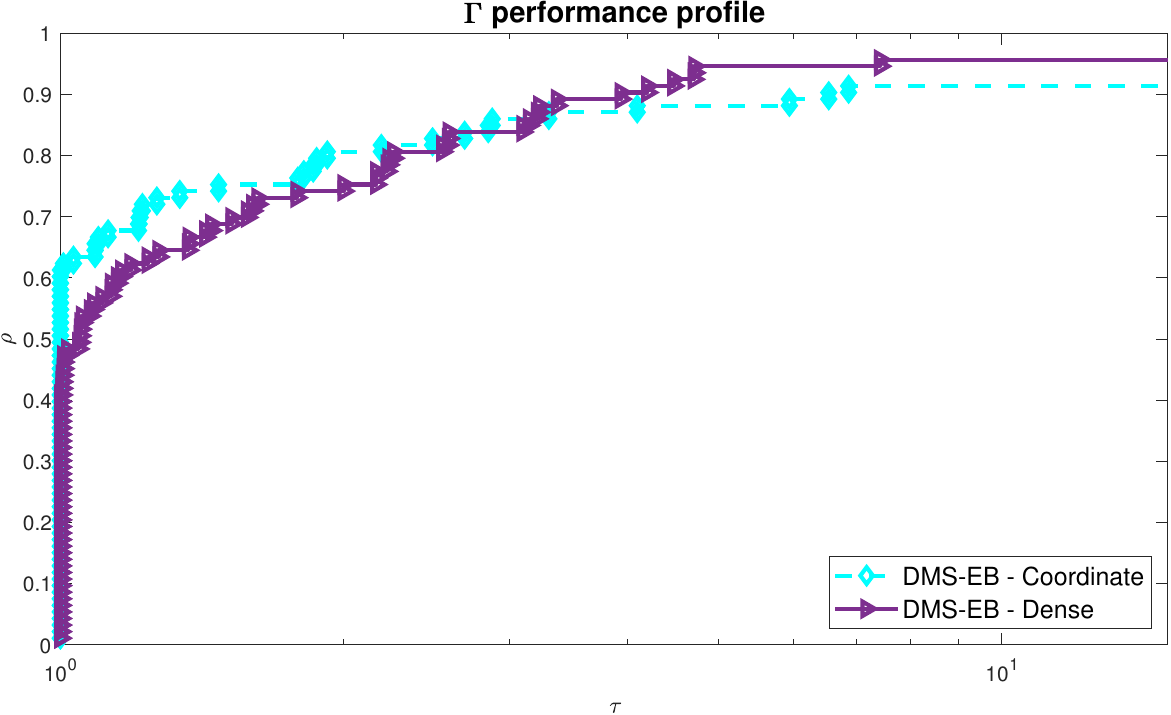}}
\subfigure[Spread Delta
\label{delta_500}]{\includegraphics[scale=0.4]{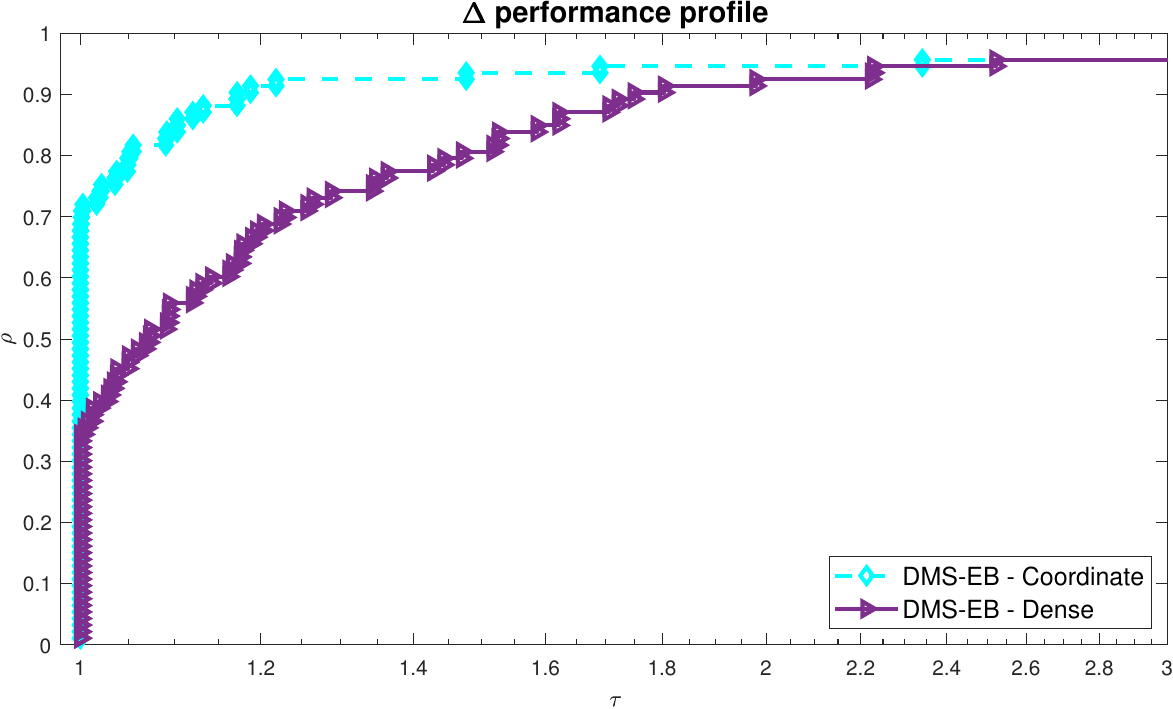}}
\caption{Performance profiles for  DMS considering different types
of positive spanning sets and a maximum budget of $5000$ function
evaluations.} \label{performance_DMS 5000funceval}
\end{figure}

\begin{figure}[h!]
\centering \subfigure[Purity
\label{purity_500}]{\includegraphics[scale=0.4]{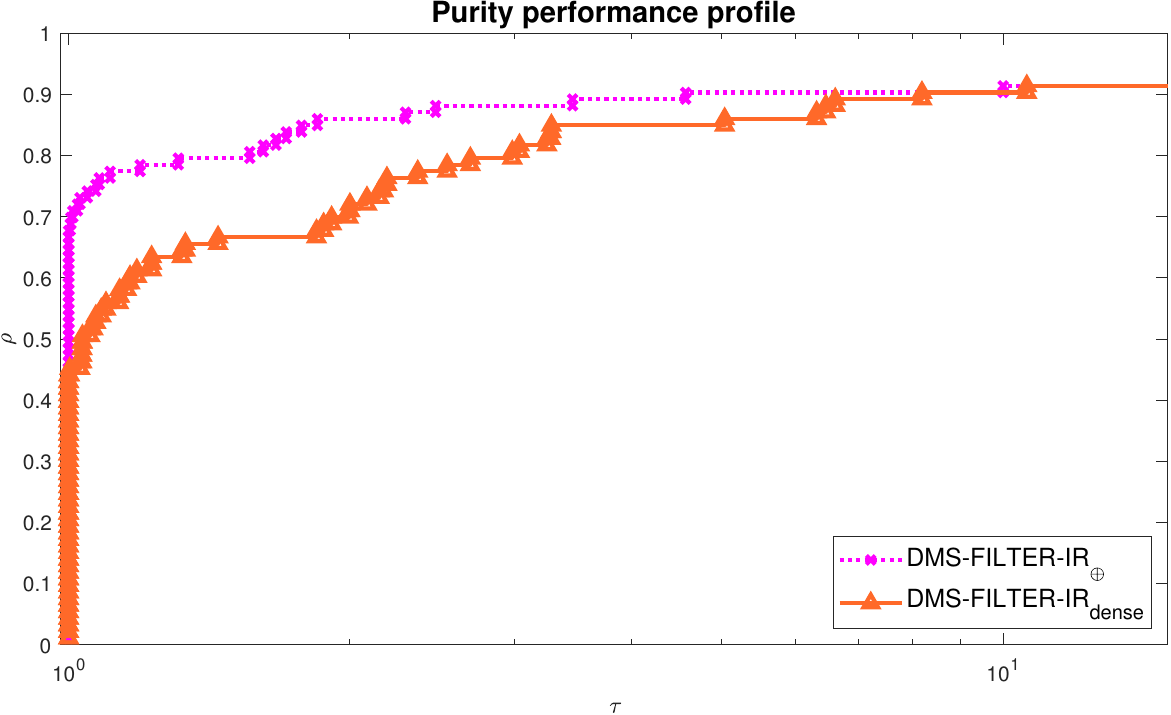}}
\subfigure[Hypervolume
\label{hypervolume_500}]{\includegraphics[scale=0.4]{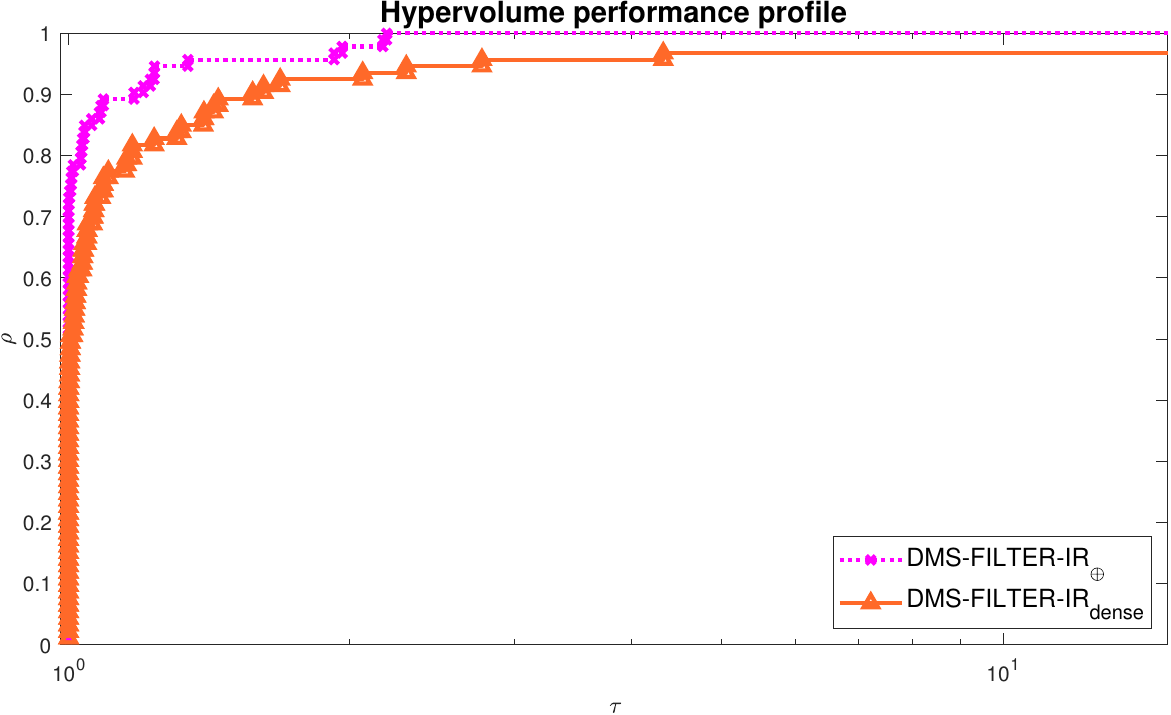}}
\subfigure[Spread Gamma
\label{gamma_500}]{\includegraphics[scale=0.4]{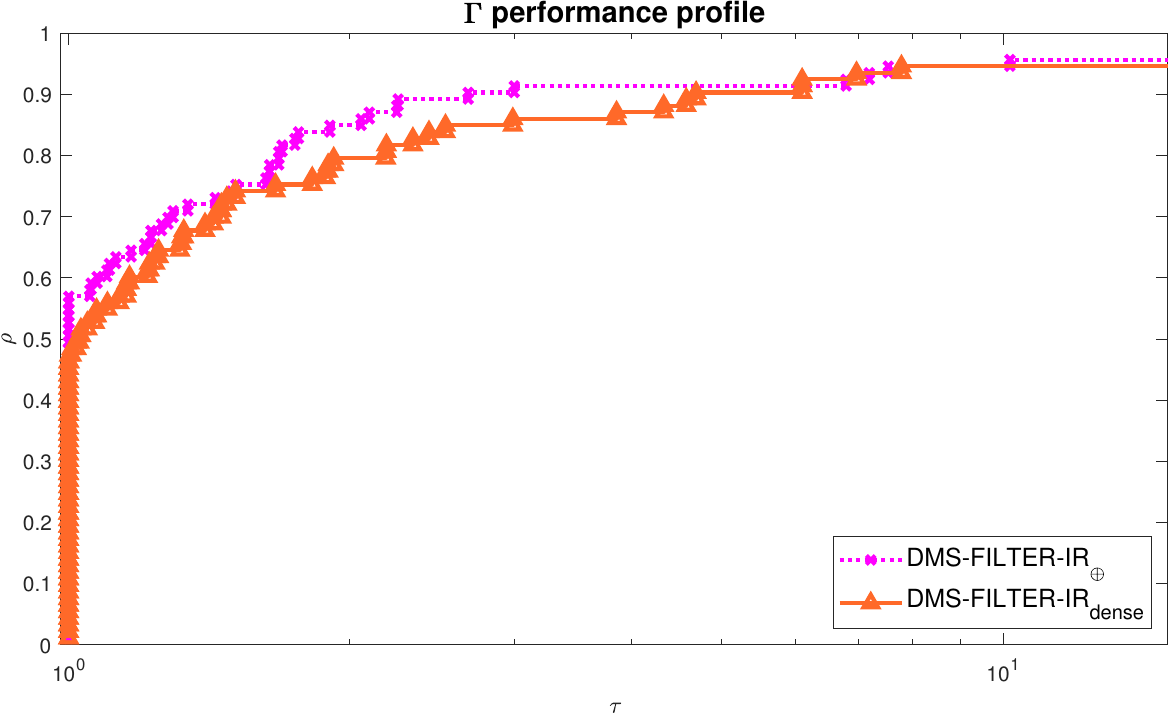}}
\subfigure[Spread Delta
\label{delta_500}]{\includegraphics[scale=0.4]{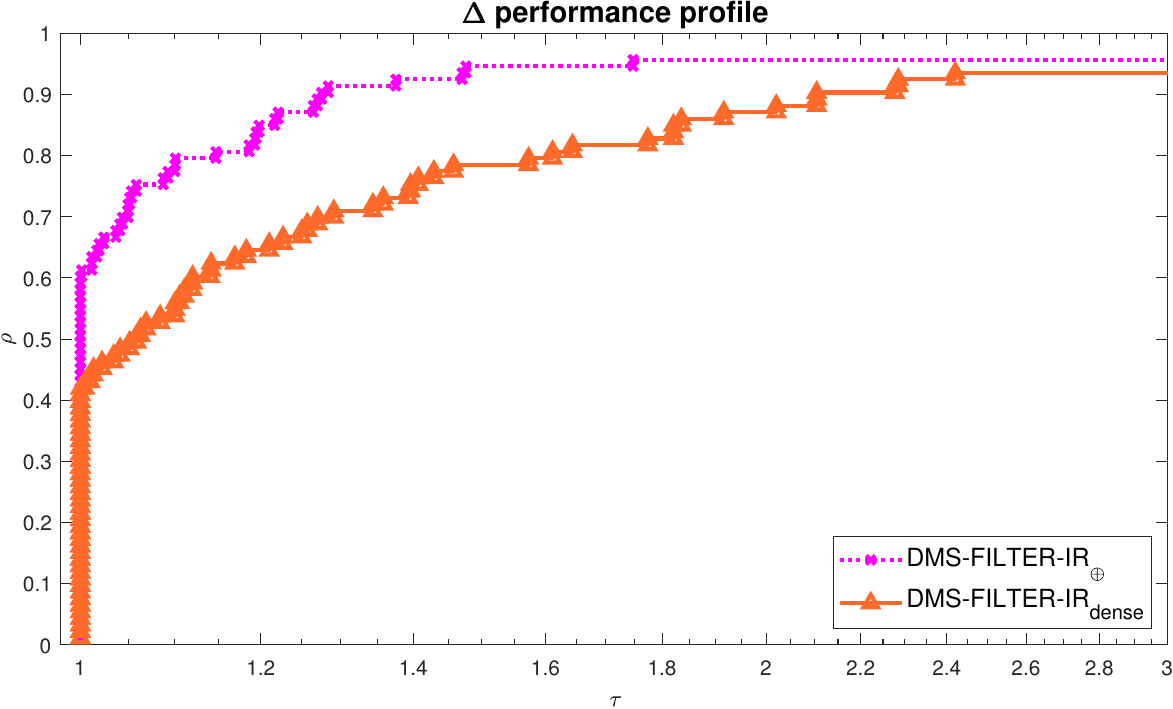}}
\caption{Performance profiles for DMS-FILTER-IR considering
different types of positive spanning sets and a maximum budget of
$5000$ function evaluations.} \label{performance_DMS-FILTER-IR
5000funceval}
\end{figure}

With exception to the spread metric $\Gamma$, where the results
are very close, for any of the metrics considered, it is clear the
advantage of the use of coordinate directions as positive spanning
set, both for DMS and DMS-FILTER-IR. As already mentioned, this
could be the result of the nice geometry associated to these
directions and bound constrained problems. Thus, in the next
section, DMS and DMS-FILTER-IR will consider positive spanning
sets based on coordinate search.

\subsection{Comparison With Other Solvers}
In addition to DMS~\cite{custodio2011}, DFMO~\cite{liuzzi2016},
and DMultiMADS-PB~\cite{bigeon2022} were also tested as benchmark
solvers to evaluate the performance of DMS-FILTER-IR. The DMS
solver is implemented in {\tt Matlab} and is freely available at
\url{http://www.mat.uc.pt/dms}. DFMO is coded in {\tt Fortran90}
and is available at \url{http://www.dis.uniroma1.it/~lucidi/DFL}.
Finally, coded in {\tt Julia}, DMultiMADS-PB can be obtained from
\url{https://github.com/bbopt/DMultiMadsPB}.

The DFMO and DMultiMADS-PB algorithms address nonlinearly
constrained multiobjective optimization problems by penalizing the
nonlinear constraints with an exact merit function or by using a
progressive barrier approach, respectively. As already mentioned,
DMS addresses constraints with an extreme barrier function.

All solvers were run with the defaults, selecting the best version
identified in~\cite{liuzzi2016} for DFMO and using the best
version reported in~\cite{bigeon2022} for DMultiMADS-PB. Results
were obtained for maximum budgets of $500$ and $5000$ function
evaluations. Considering the expensive nature of the function
evaluation, small budgets are particular relevant for evaluating
the performance of the solvers.

As in the previous subsection, DMS-EB was initialized with the
feasible point $x^0$ provided in~\cite{karmitsa}. DMS-FILTER-IR,
DMultiMADS-PB, and DFMO can be initialized with infeasible points.
Therefore, DMS-FILTER-IR and DMultiMADS-PB were initialized with
$n$-points equally spaced in the line segment, joining the
variable upper and lower bounds, which is the default
initialization of DMultiMADS-PB. DFMO was initialized with the
centroid of the box defined by the bound constraints. After, the
algorithm also generates $n$-points equally spaced in the line
segment joining the bounds.

Figure~\ref{performance_500funceval} depicts performance profiles
for the different metrics considered, when a maximum budget of
$500$ function evaluations is allowed.

\begin{figure}[h!]
\centering \subfigure[Purity
\label{purity_500}]{\includegraphics[scale=0.4]{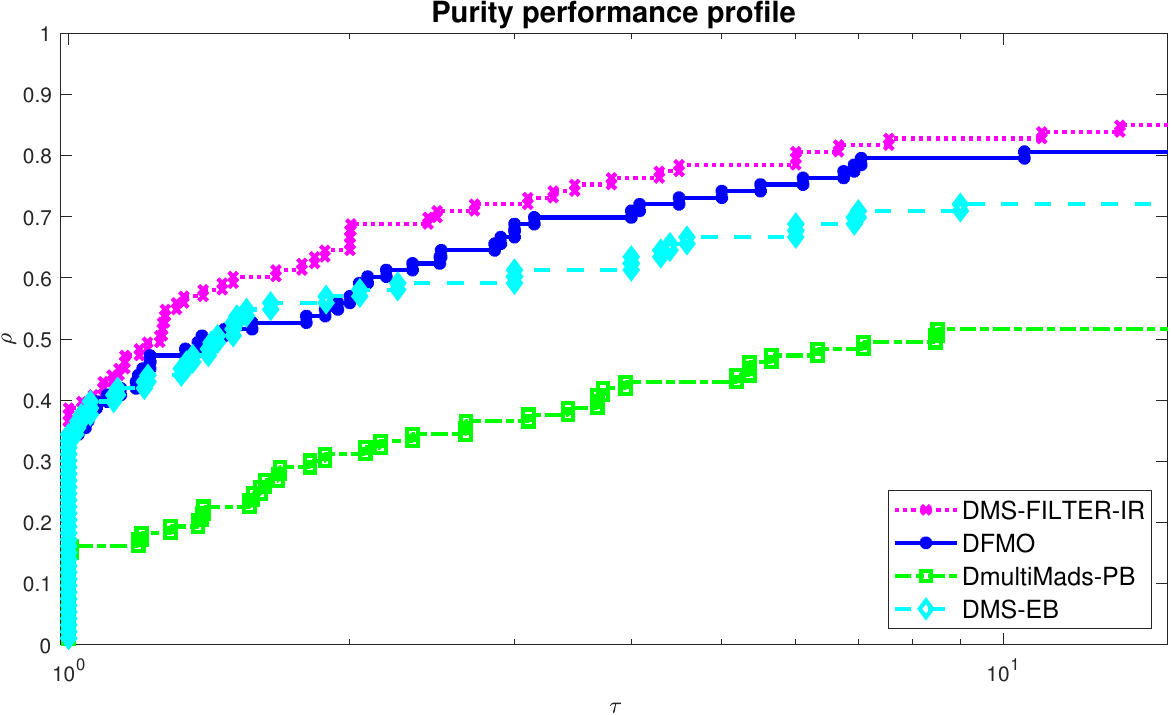}}
\subfigure[Hypervolume
\label{hypervolume_500}]{\includegraphics[scale=0.4]{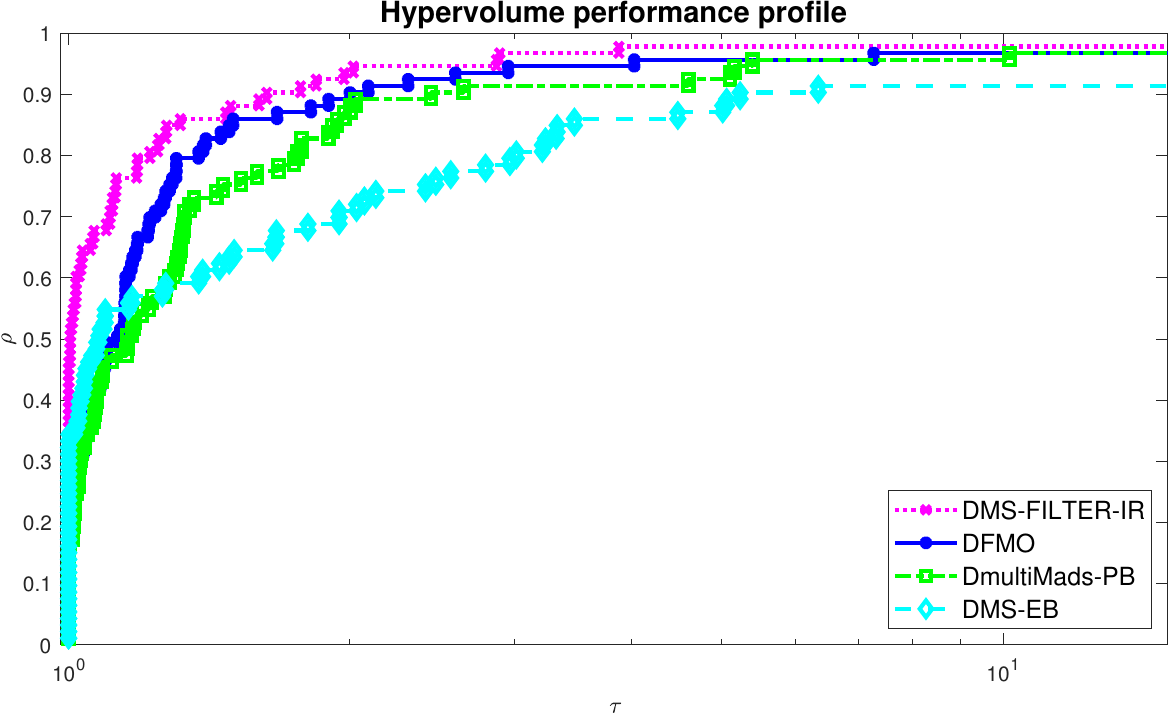}}
\subfigure[Spread Gamma
\label{gamma_500}]{\includegraphics[scale=0.4]{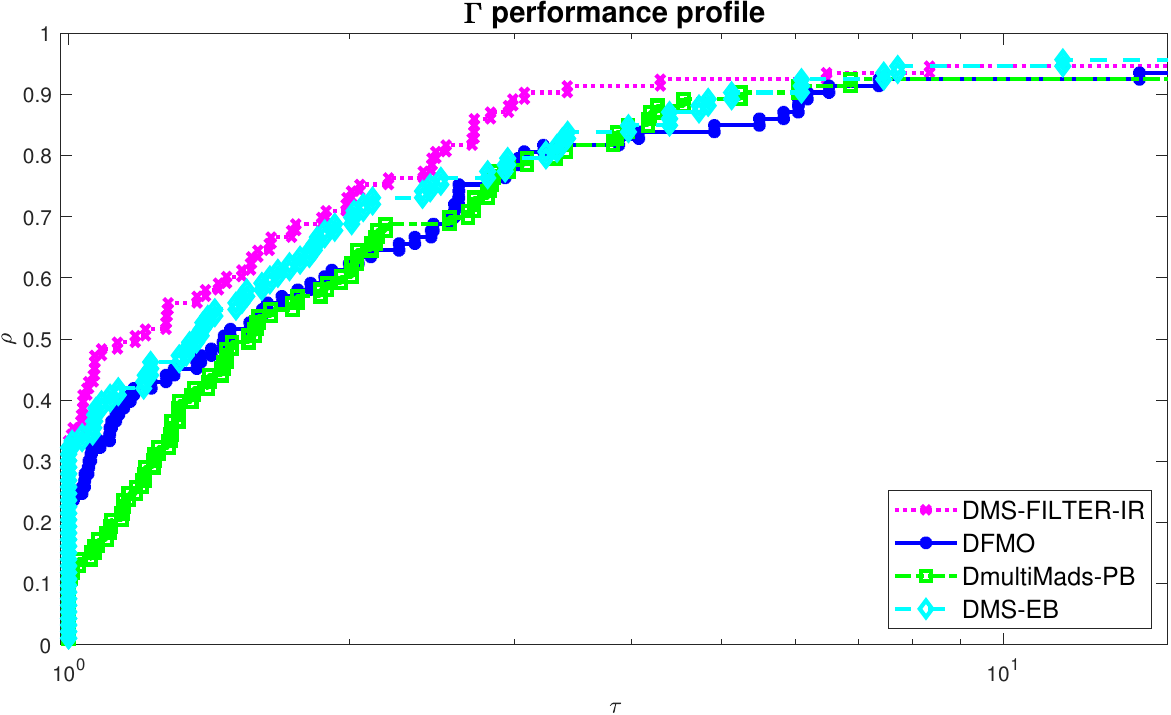}}
\subfigure[Spread Delta
\label{delta_500}]{\includegraphics[scale=0.4]{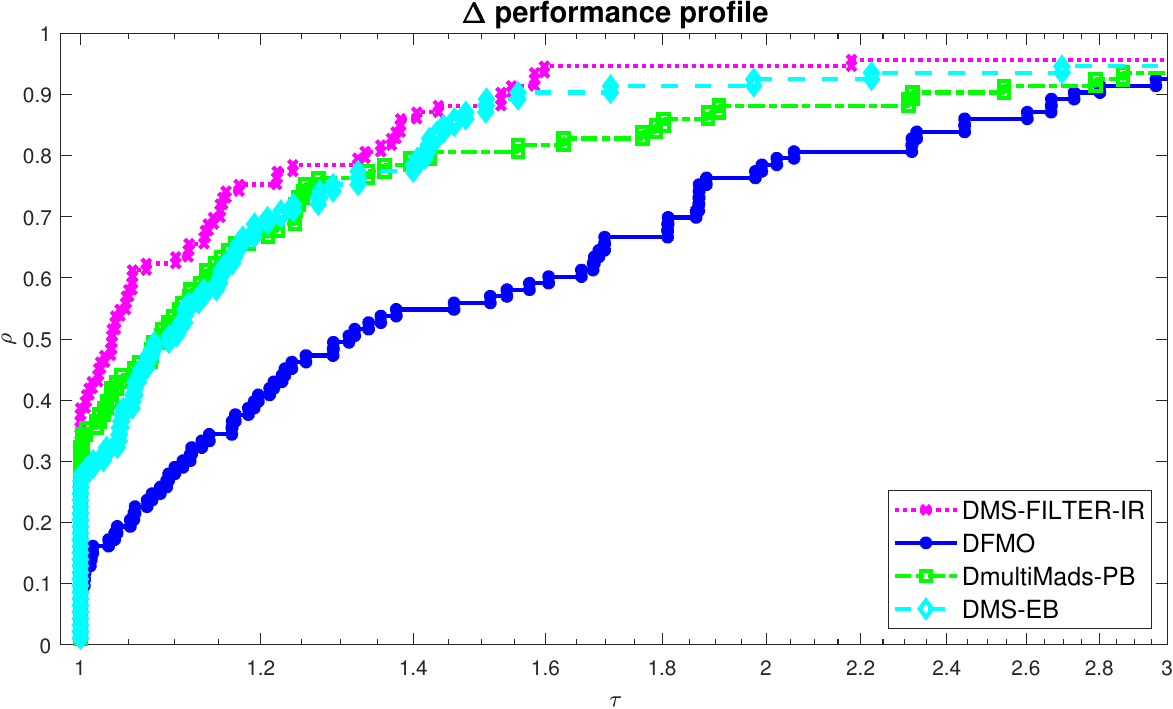}}
\caption{Comparing DMS-FILTER-IR with DFMO, DMultiMADS-PB, and DMS
based on performance profiles for a maximum of $500$ function
evaluations.} \label{performance_500funceval}
\end{figure}

In general, DMS-FILTER-IR presents a good performance for any of
the four metrics considered. Noteworthy, it is the most efficient
solver for hypervolume and presents some advantage regarding
robustness for the purity metric.

When the maximum budget allowed increases to $5000$ function
evaluations (see Figure~\ref{performance_5000funceval}),
DMS-FILTER-IR remains as the most competitive solver in what
respects purity and hypervolume. For the spread metric $\Delta$,
DMS-EB presents a better performance. DFMO provides some good
results in terms of the largest gap in the Pareto front,
represented by metric $\Gamma$.

\begin{figure}[h!]
\centering
\subfigure[Purity]{\includegraphics[scale=0.4]{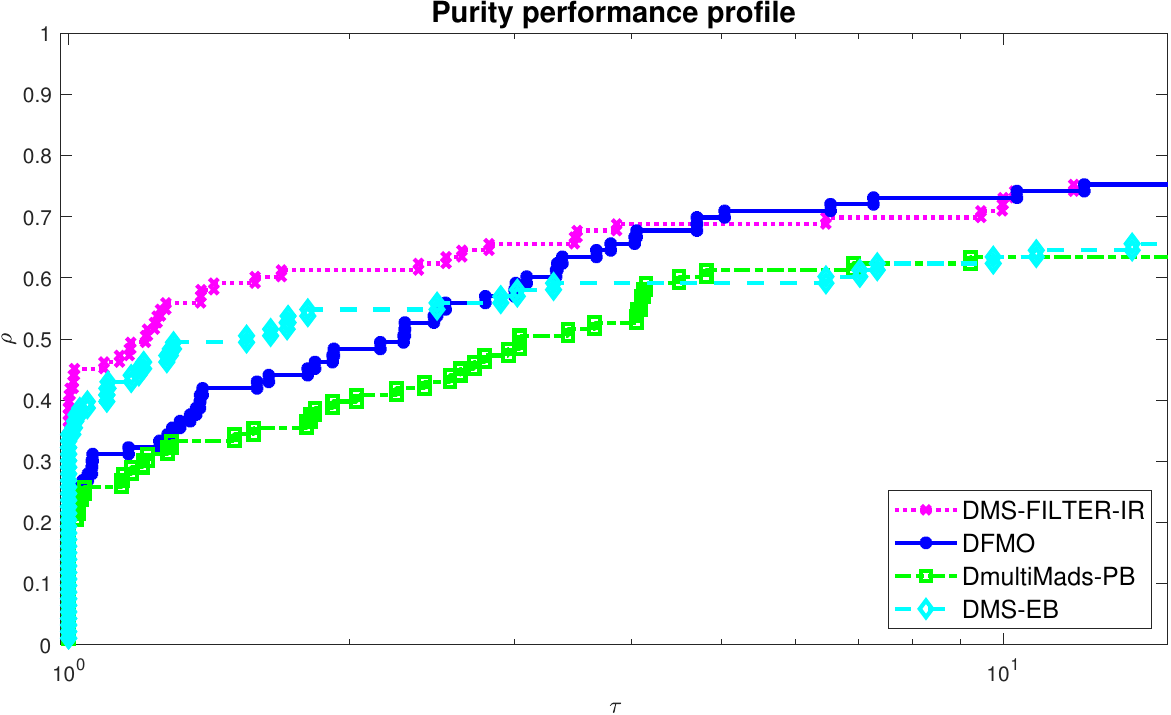}}
\subfigure[Hypervolume]{\includegraphics[scale=0.4]{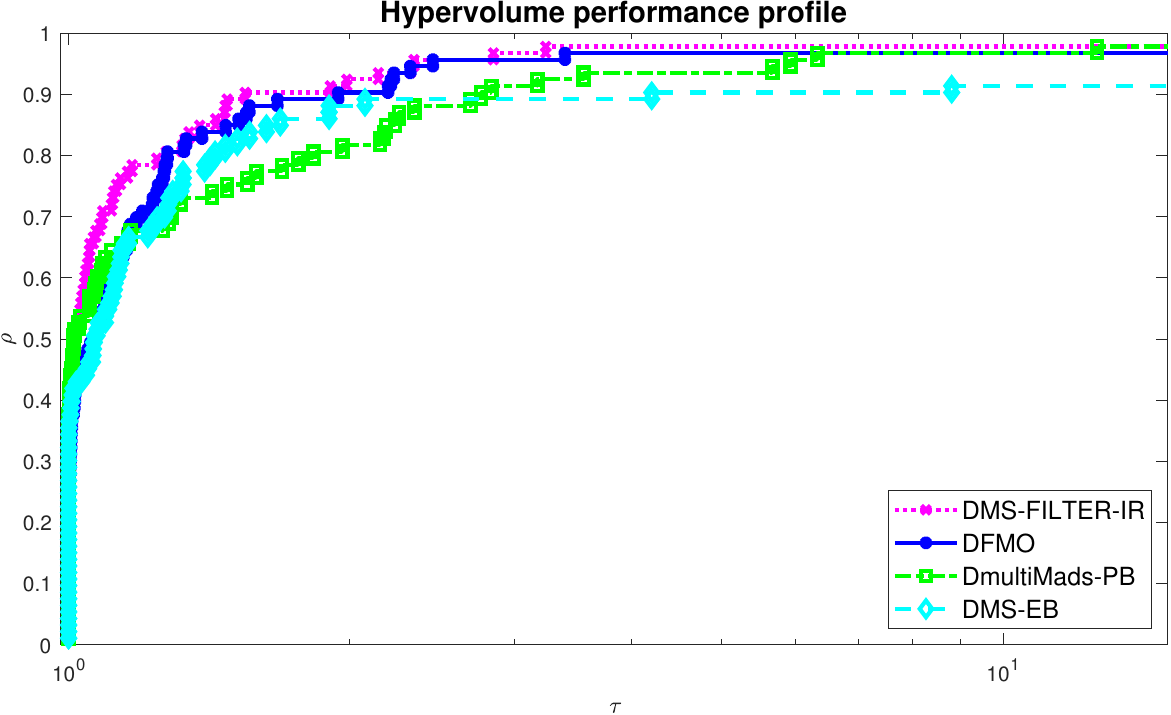}}
\subfigure[Spread
Gamma]{\includegraphics[scale=0.4]{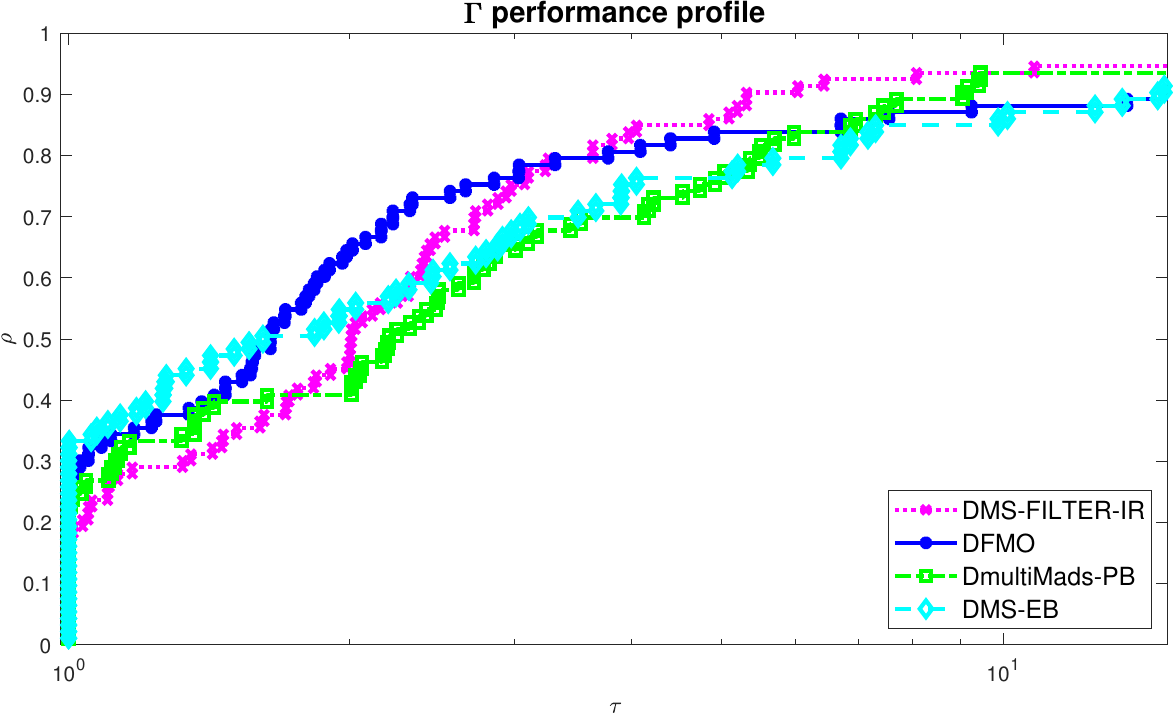}}
\subfigure[Spread
Delta]{\includegraphics[scale=0.4]{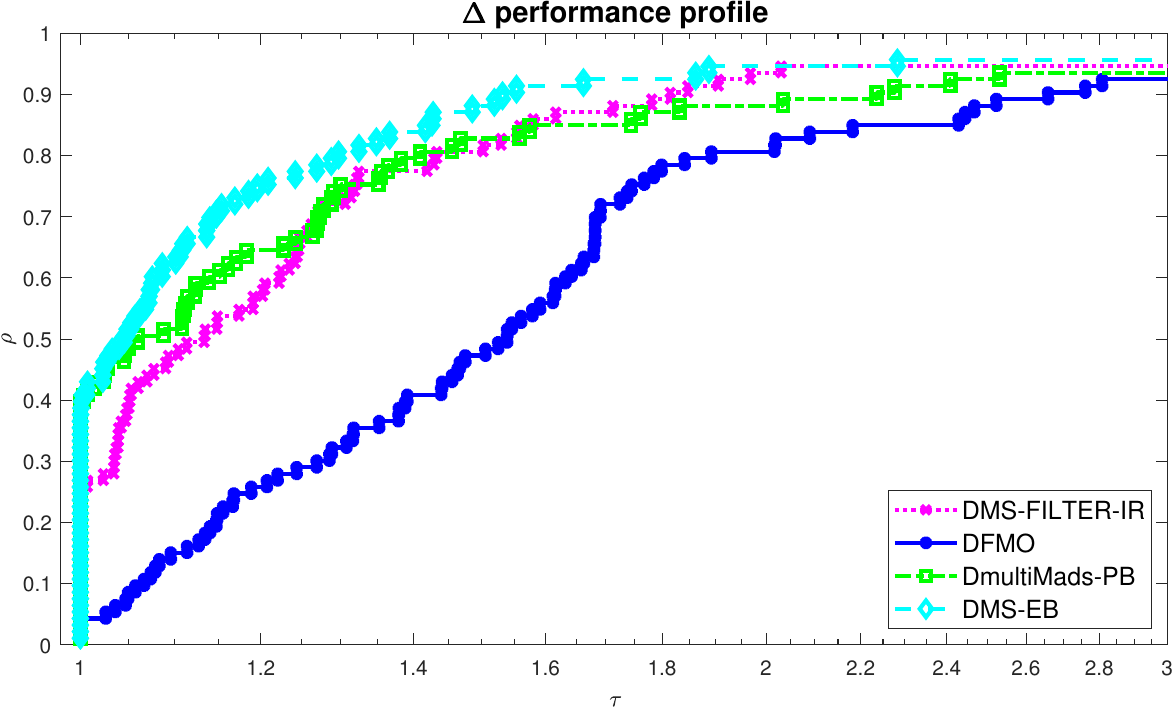}}
\caption{Comparing DMS-FILTER-IR with DFMO, DMultiMads-PB, and DMS
based on performance profiles for a maximum of $5000$ function
evaluations.} \label{performance_5000funceval}
\end{figure}

Individual comparisons between DMS-FILTER-IR and each of the three
remaining solvers considered can be found in
Figures~\ref{individualperformance_500funceval}
and~\ref{individualperformance_5000funceval}, for budgets of $500$
and $5000$ function evaluations, respectively, clarifying the
previous analysis and supporting the conclusions drawn.

\begin{sidewaysfigure}
\centering
\includegraphics[scale=0.28]{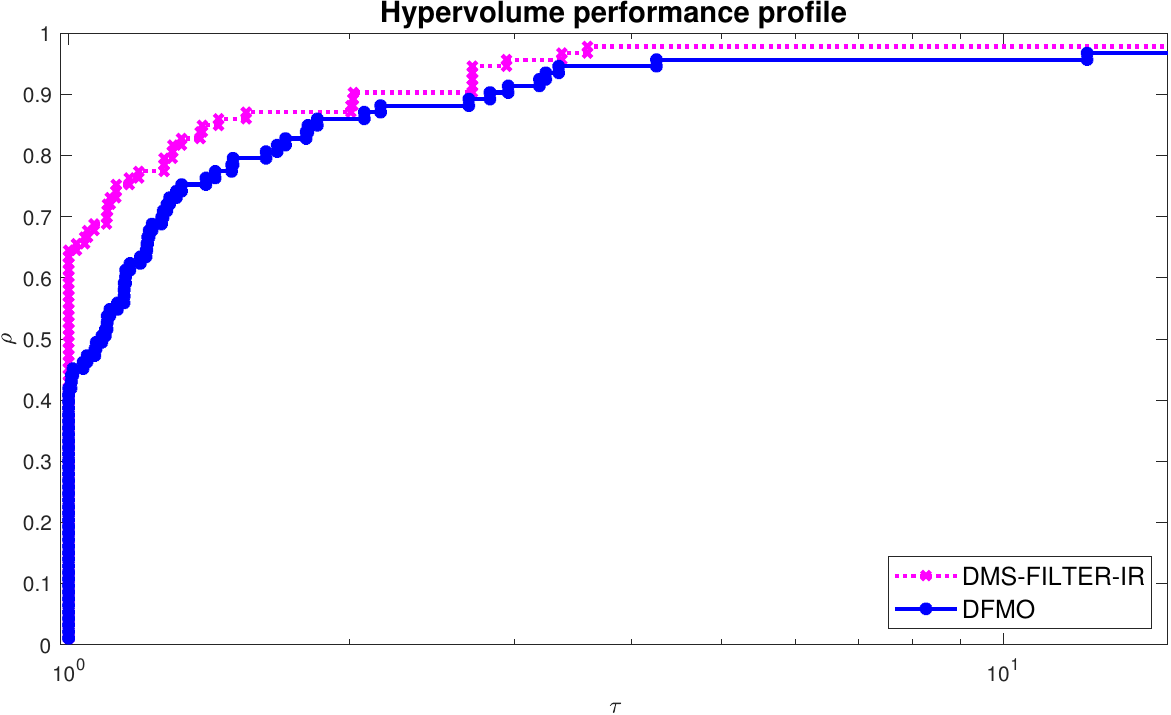}\includegraphics[scale=0.28]{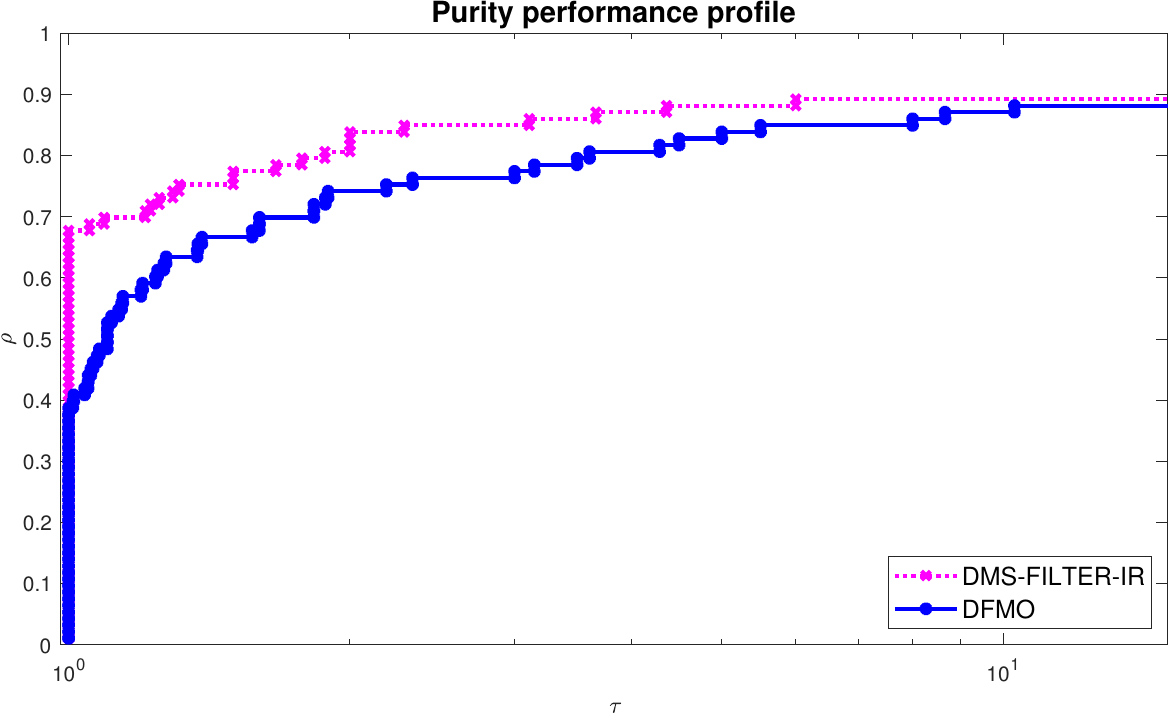}\includegraphics[scale=0.28]{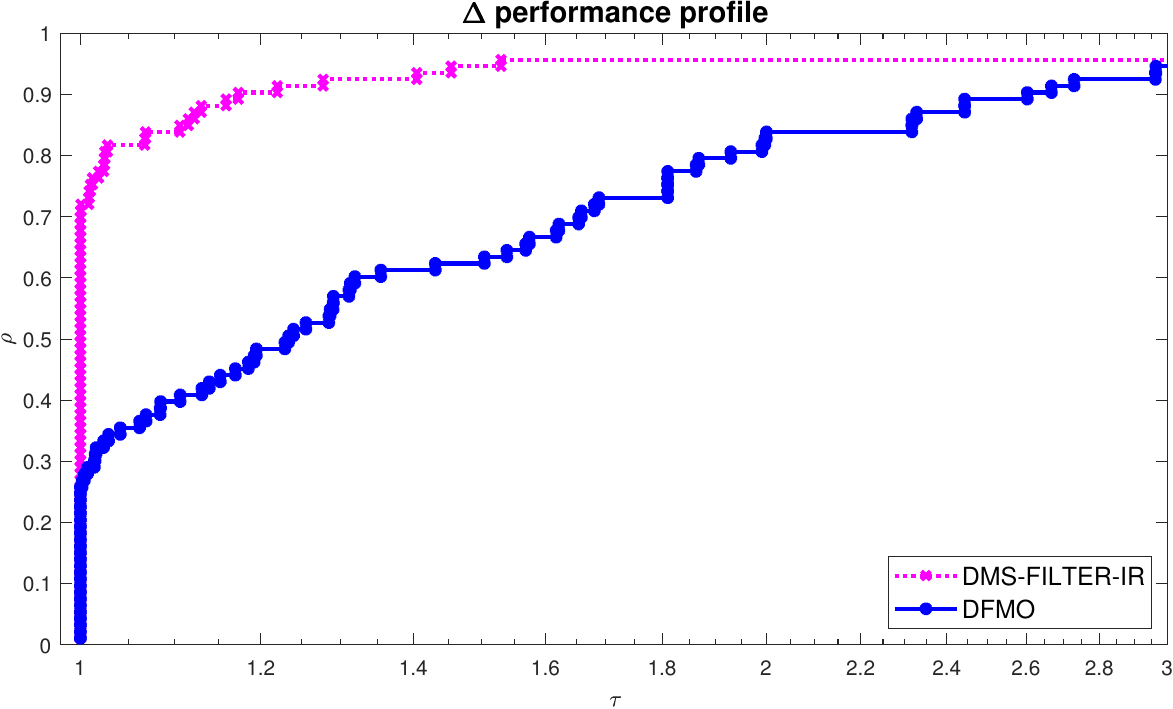}\includegraphics[scale=0.28]{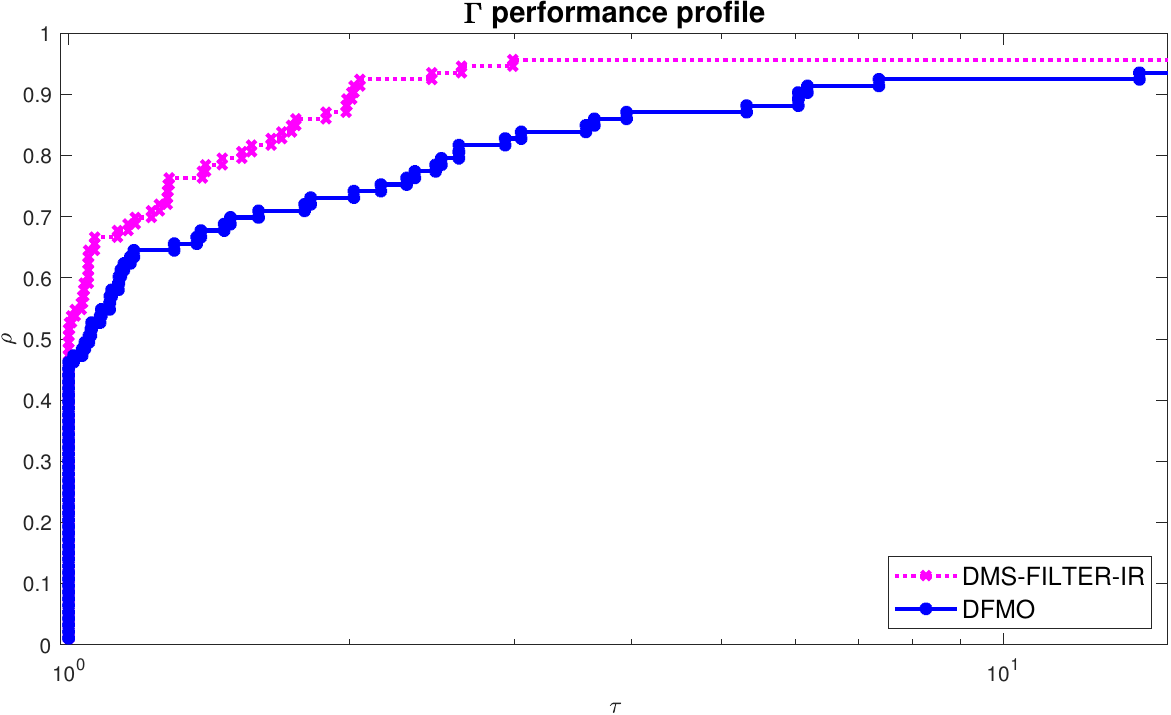}\\
\includegraphics[scale=0.28]{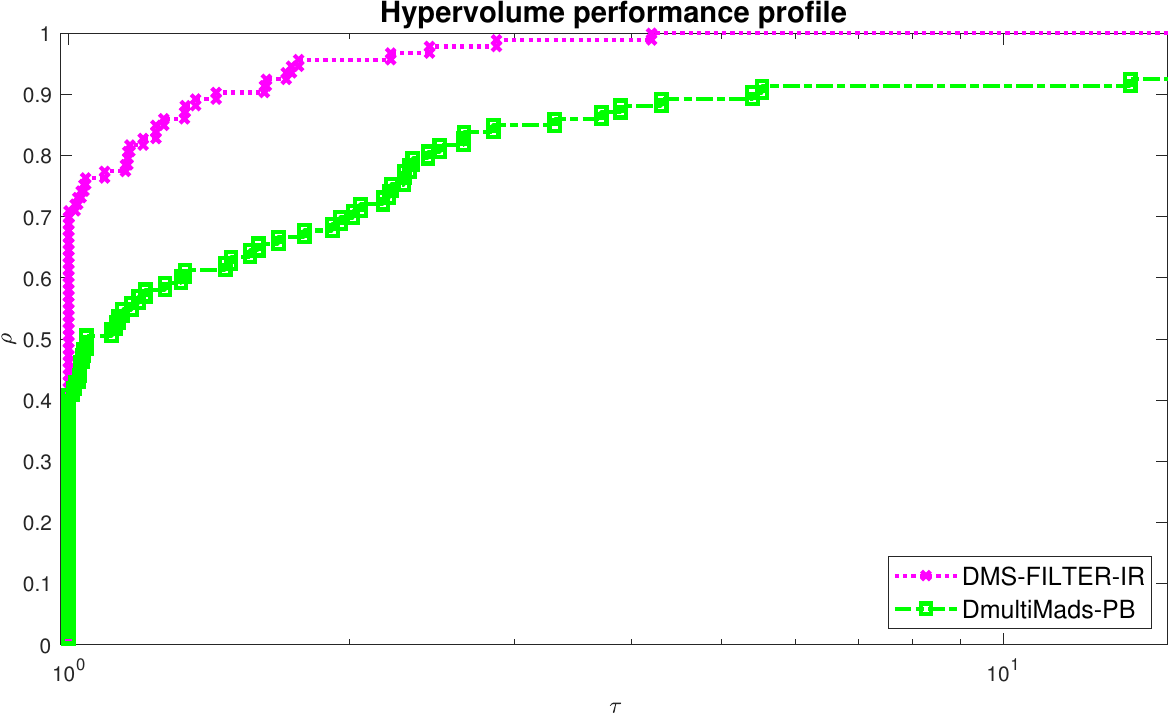}\includegraphics[scale=0.28]{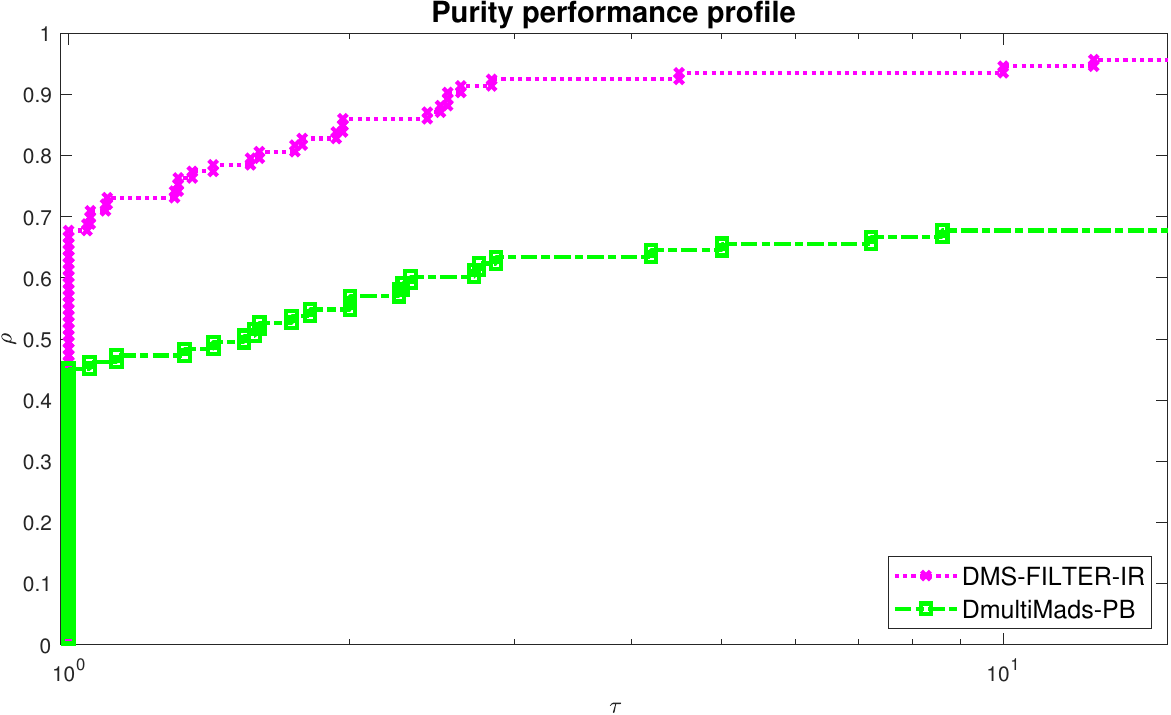}\includegraphics[scale=0.28]{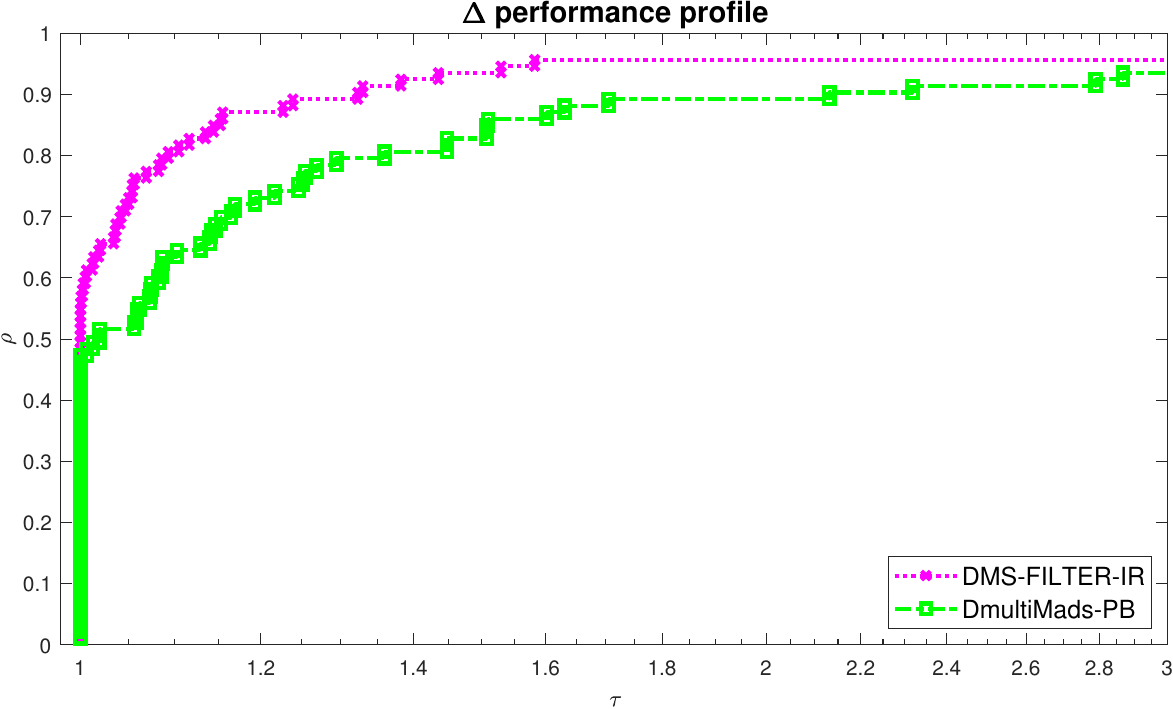}\includegraphics[scale=0.28]{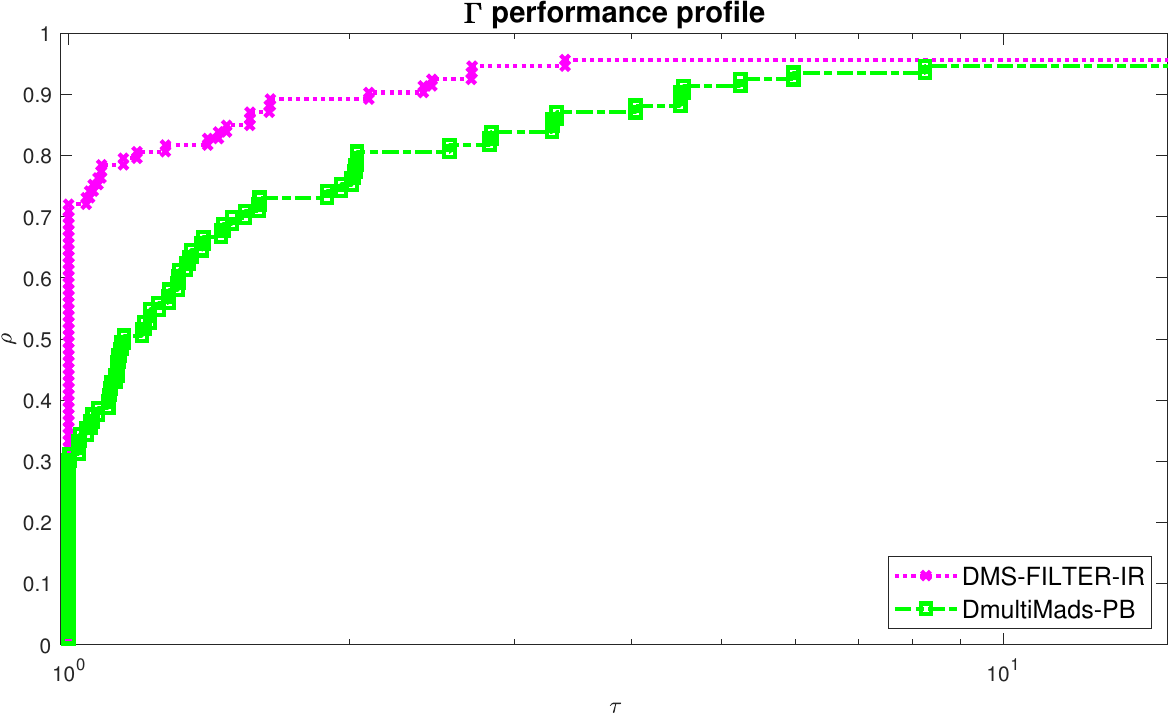}\\
\includegraphics[scale=0.28]{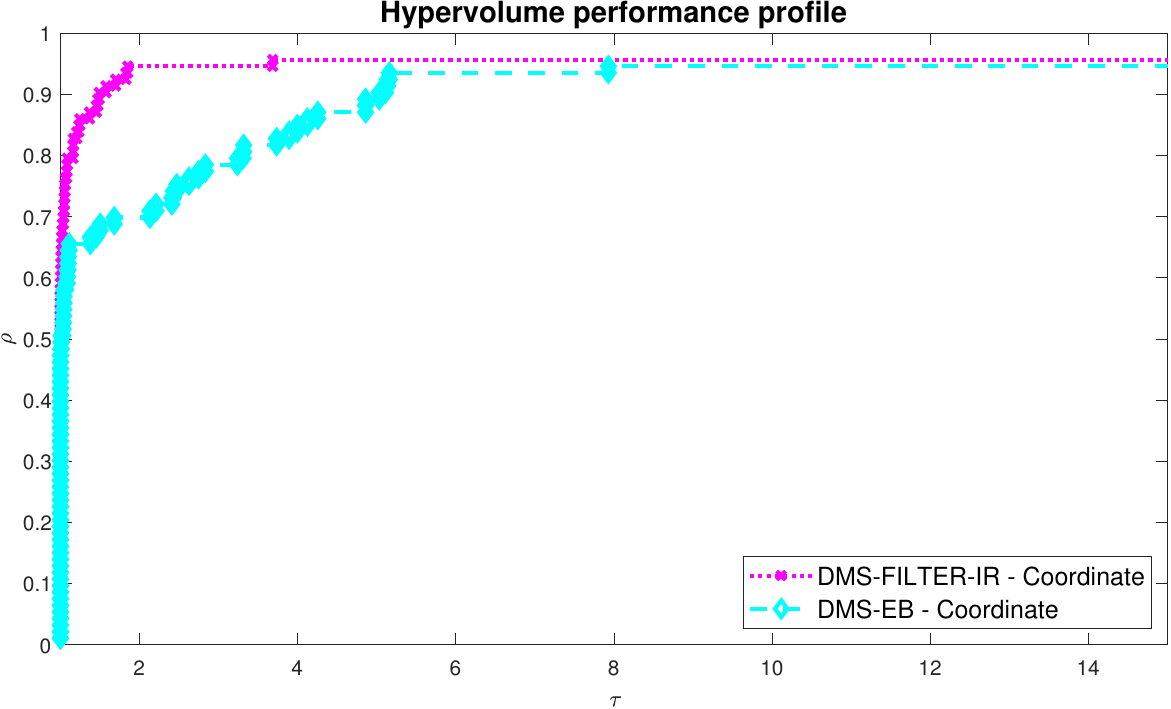}\includegraphics[scale=0.28]{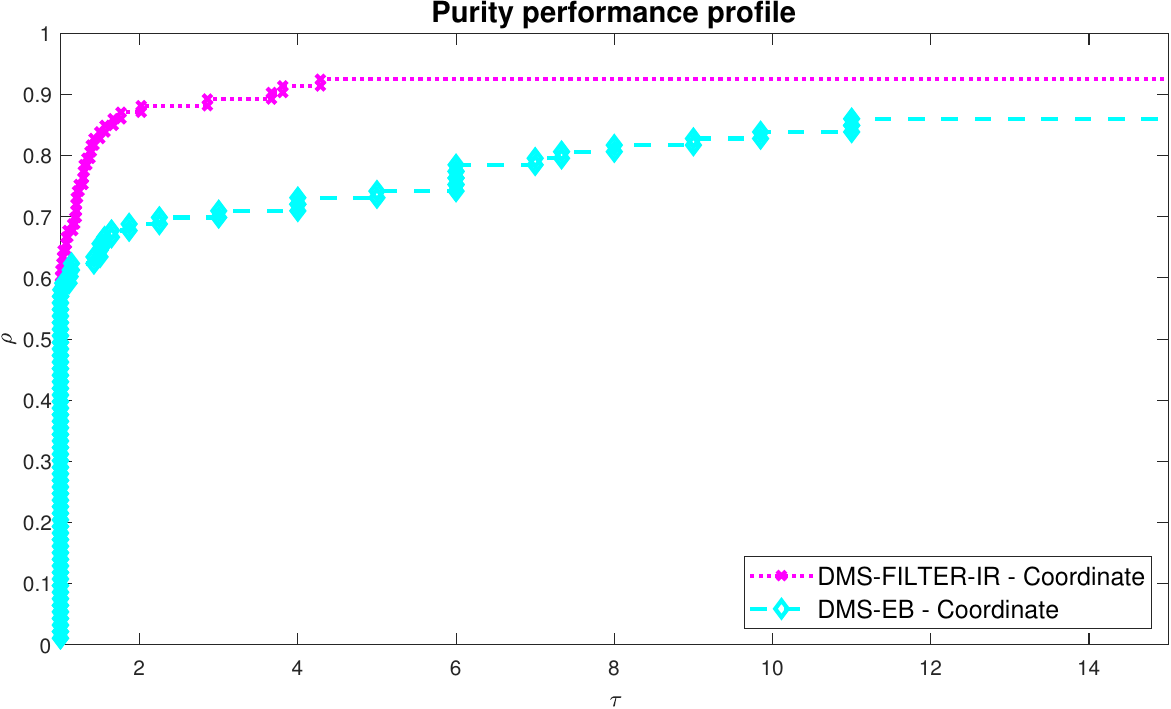}\includegraphics[scale=0.28]{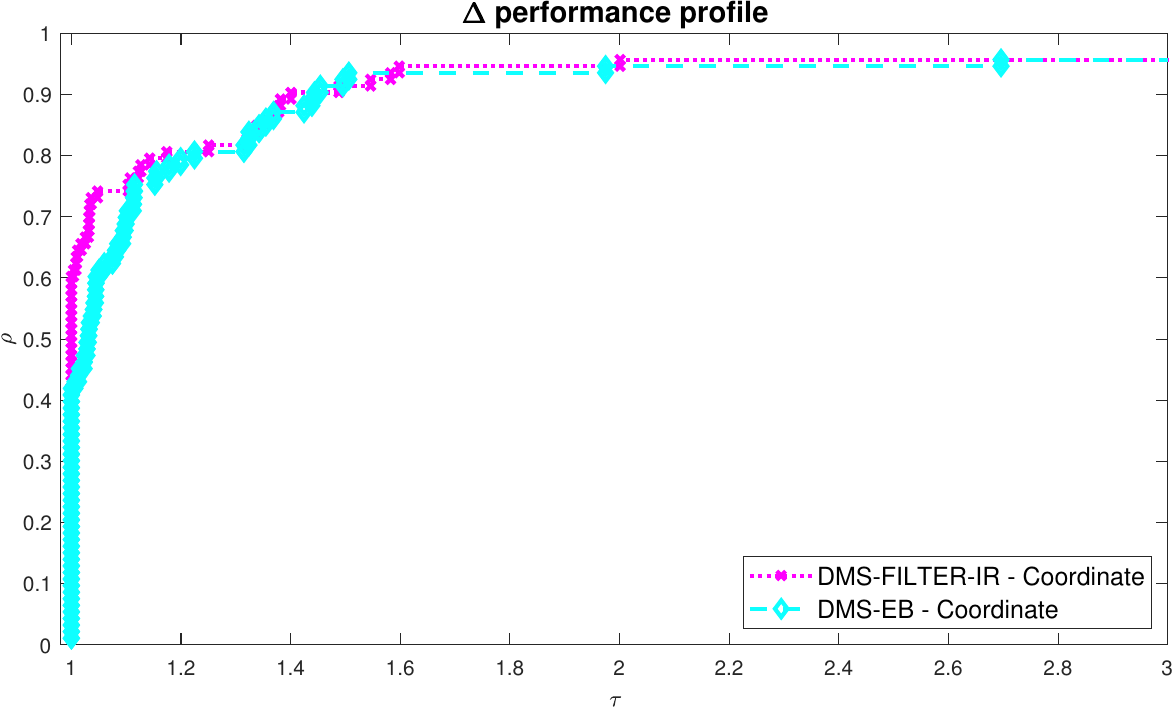}\includegraphics[scale=0.28]{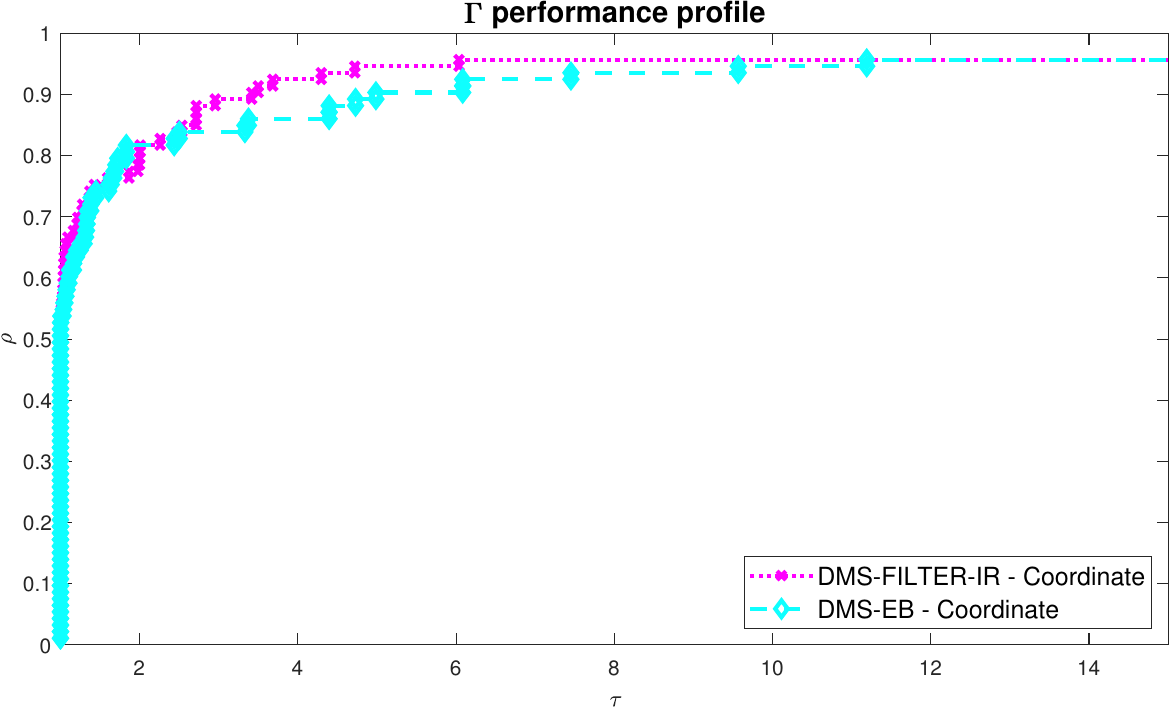}\\
\caption{Individual comparison between DMS-FILTER-IR and DFMO,
DMultiMADS-PB, and DMS based on performance profiles for a maximum
budget of $500$ function evaluations.}
\label{individualperformance_500funceval}
\end{sidewaysfigure}

\begin{sidewaysfigure}
\centering
\includegraphics[scale=0.28]{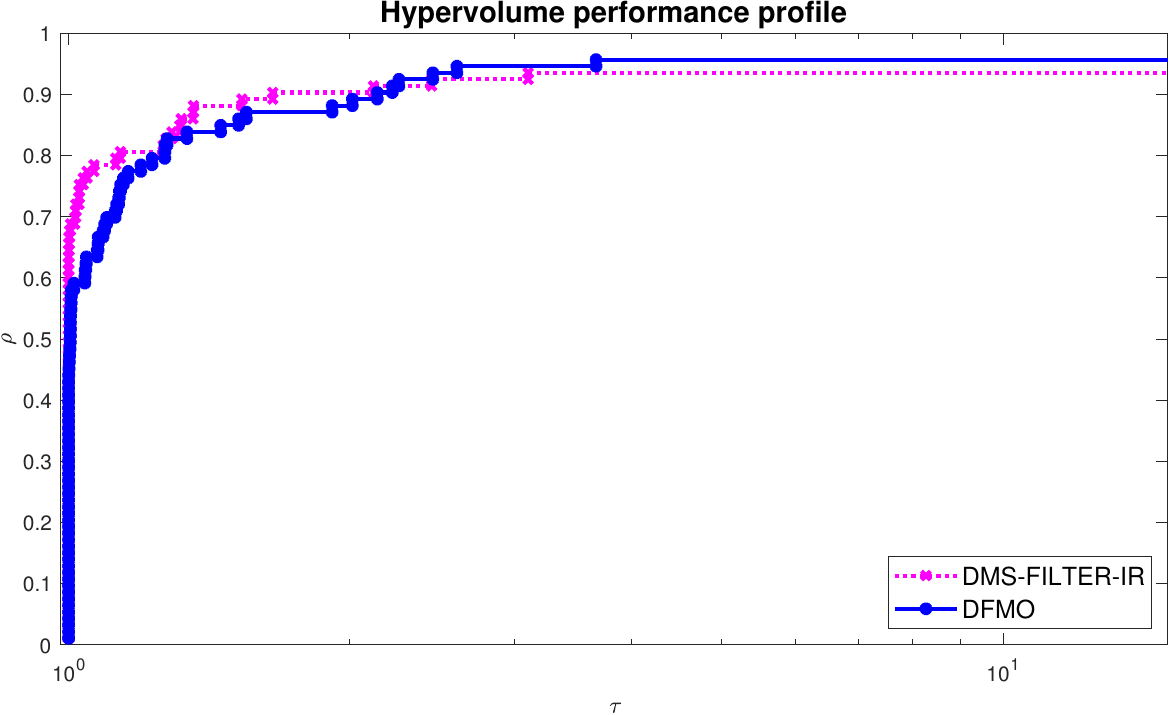}\includegraphics[scale=0.28]{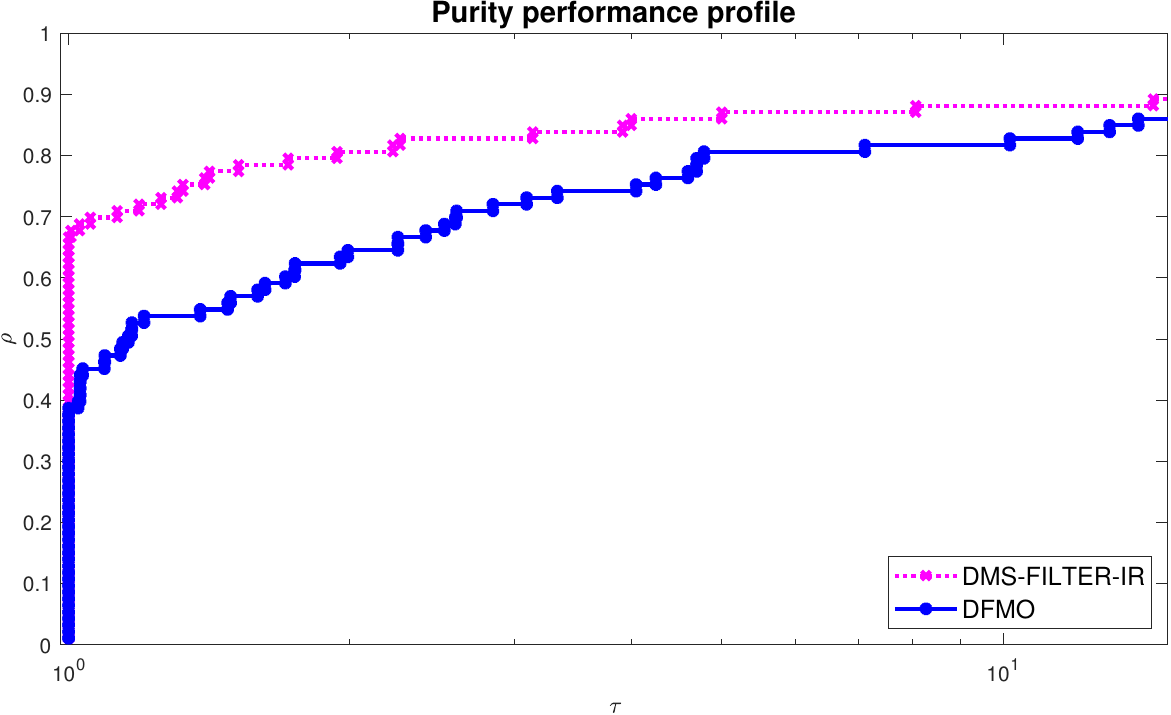}\includegraphics[scale=0.28]{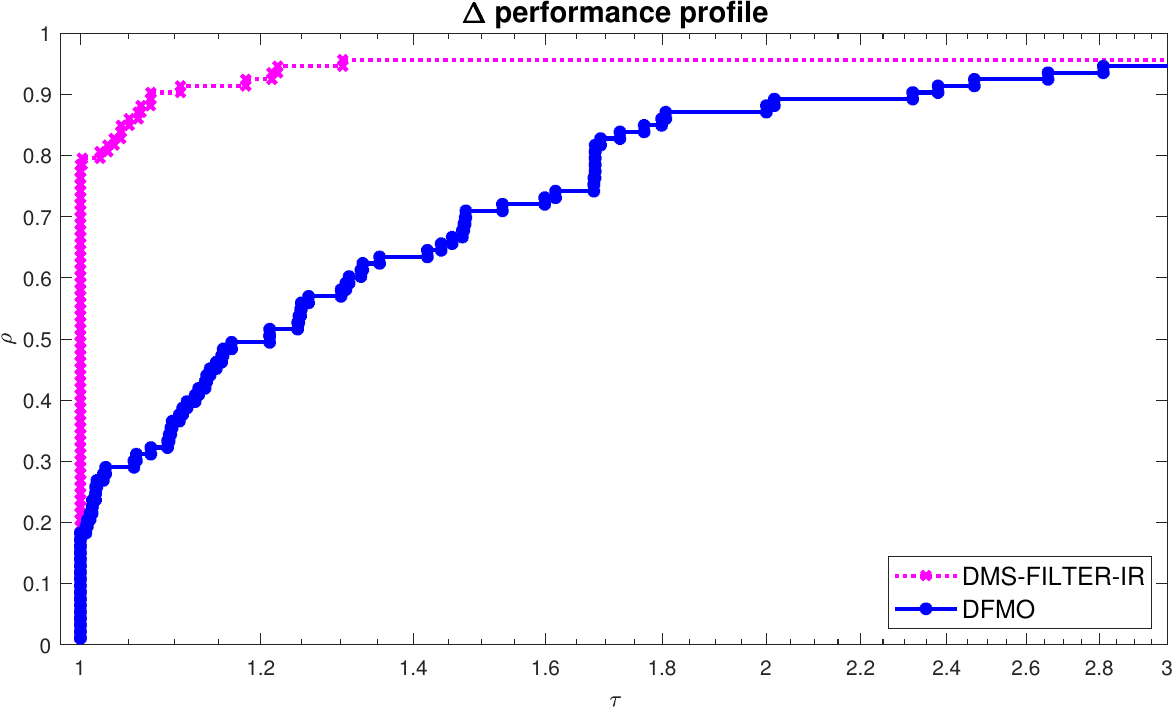}\includegraphics[scale=0.28]{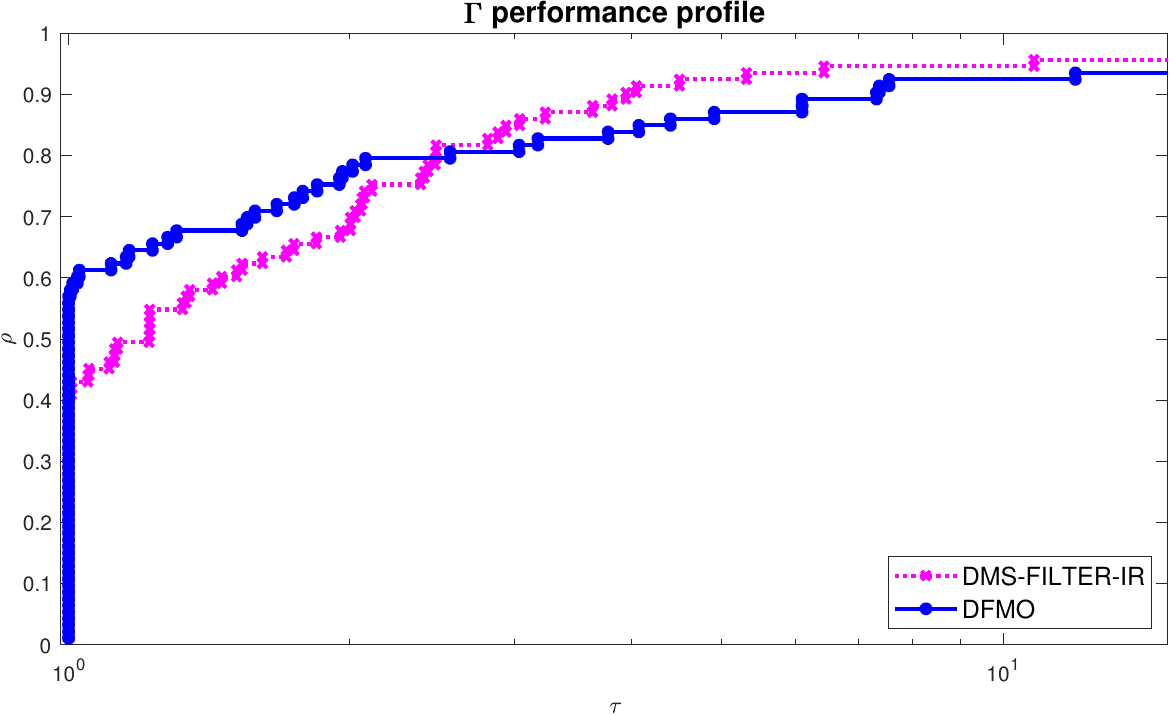}\\
\includegraphics[scale=0.28]{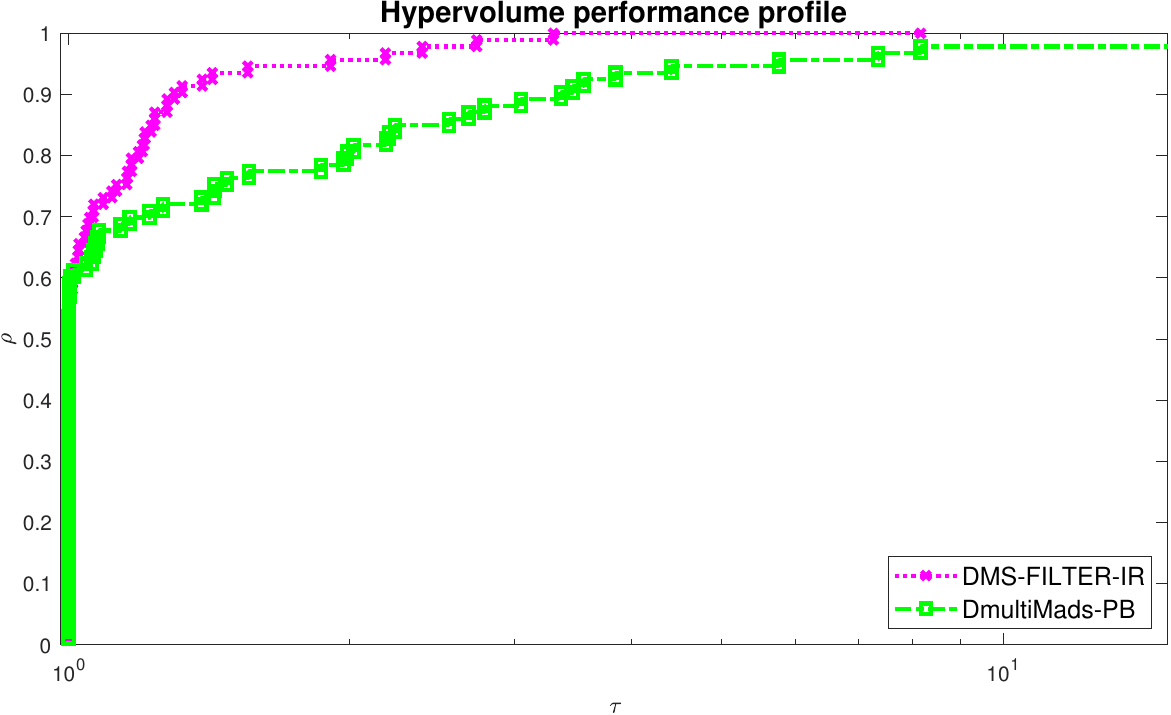}\includegraphics[scale=0.28]{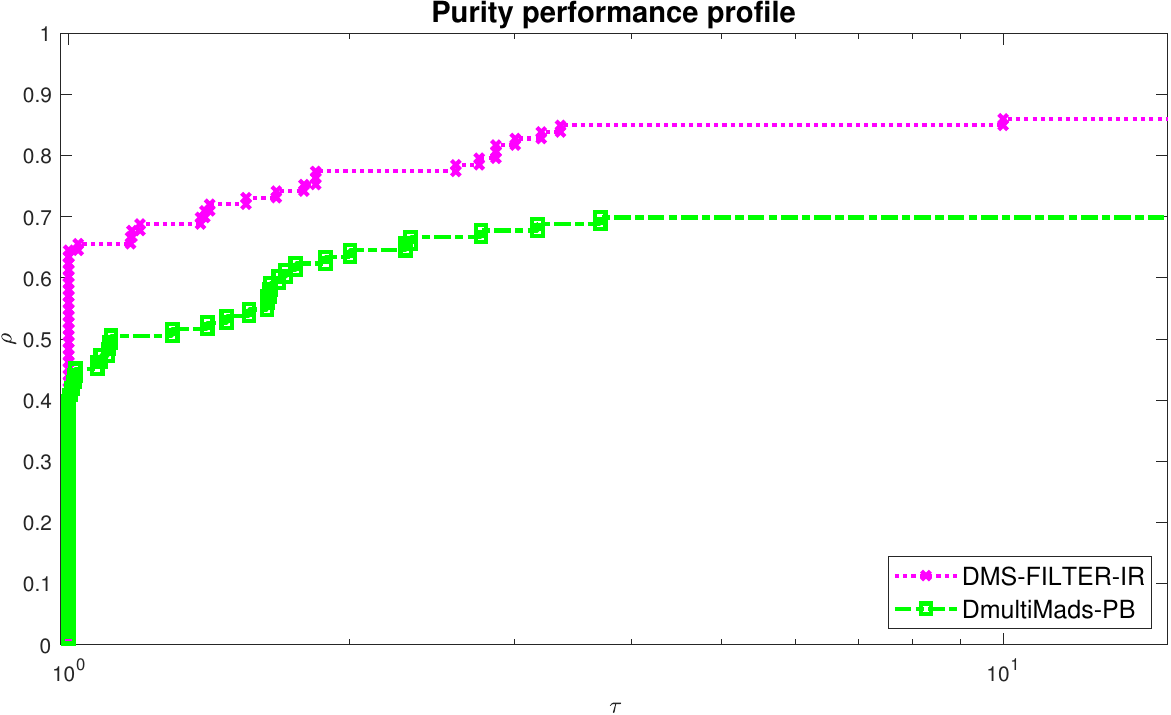}\includegraphics[scale=0.28]{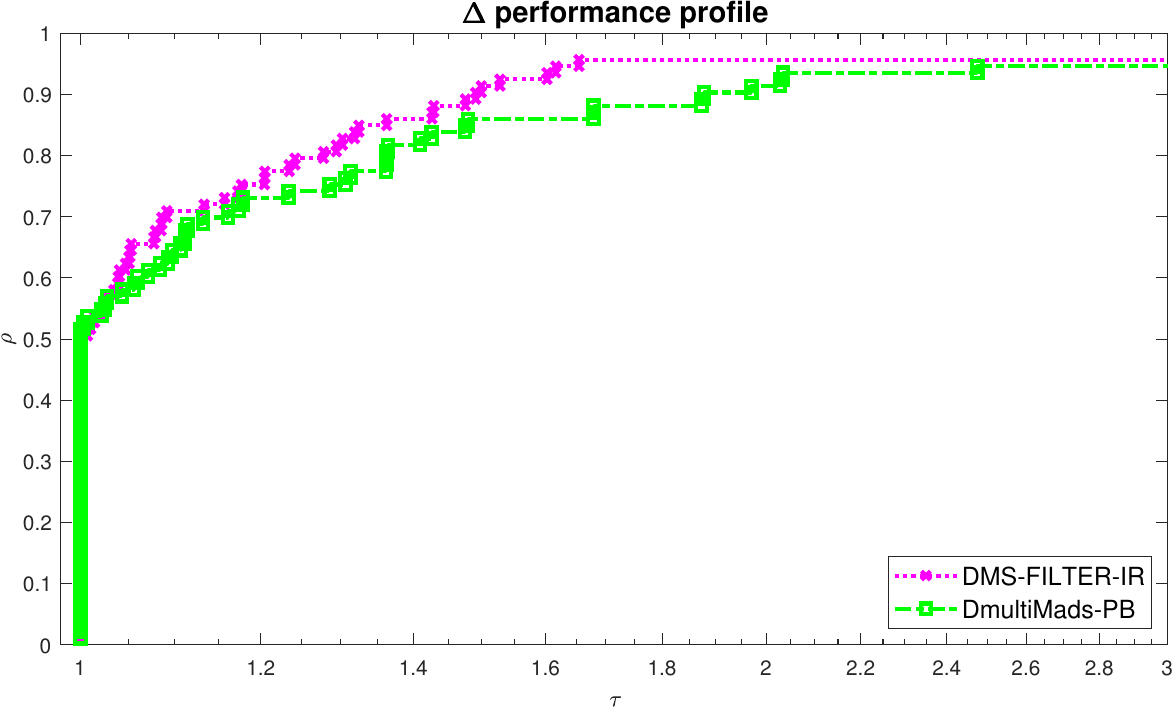}\includegraphics[scale=0.28]{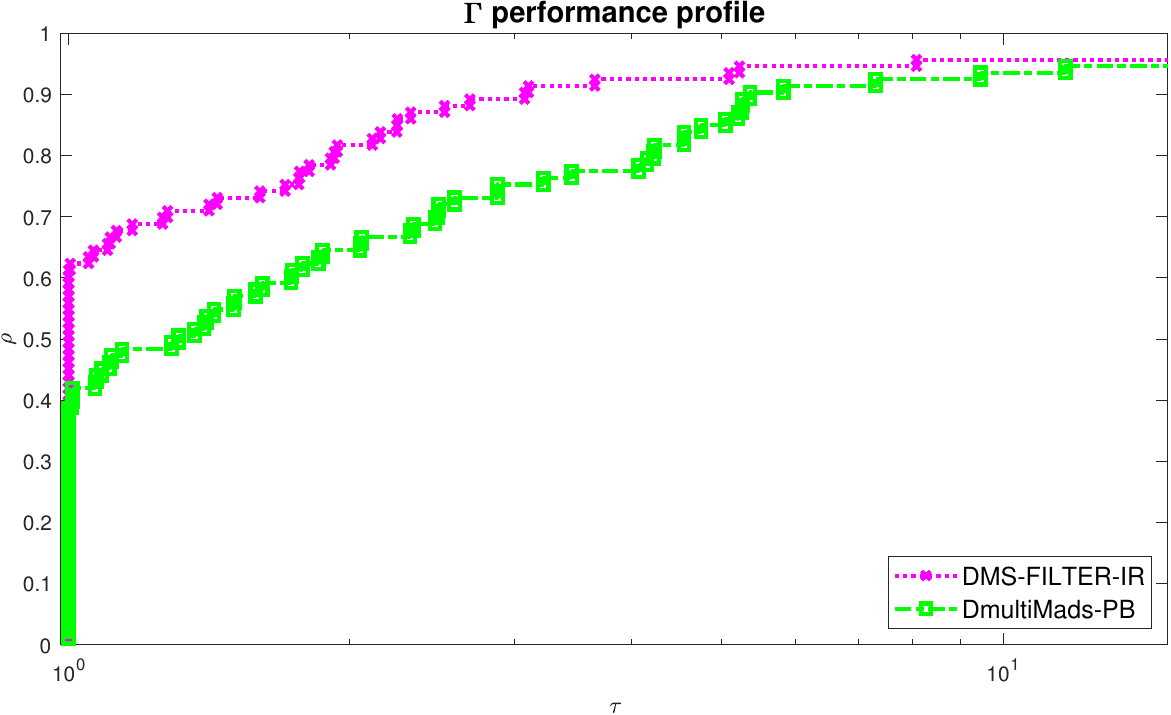}\\
\includegraphics[scale=0.28]{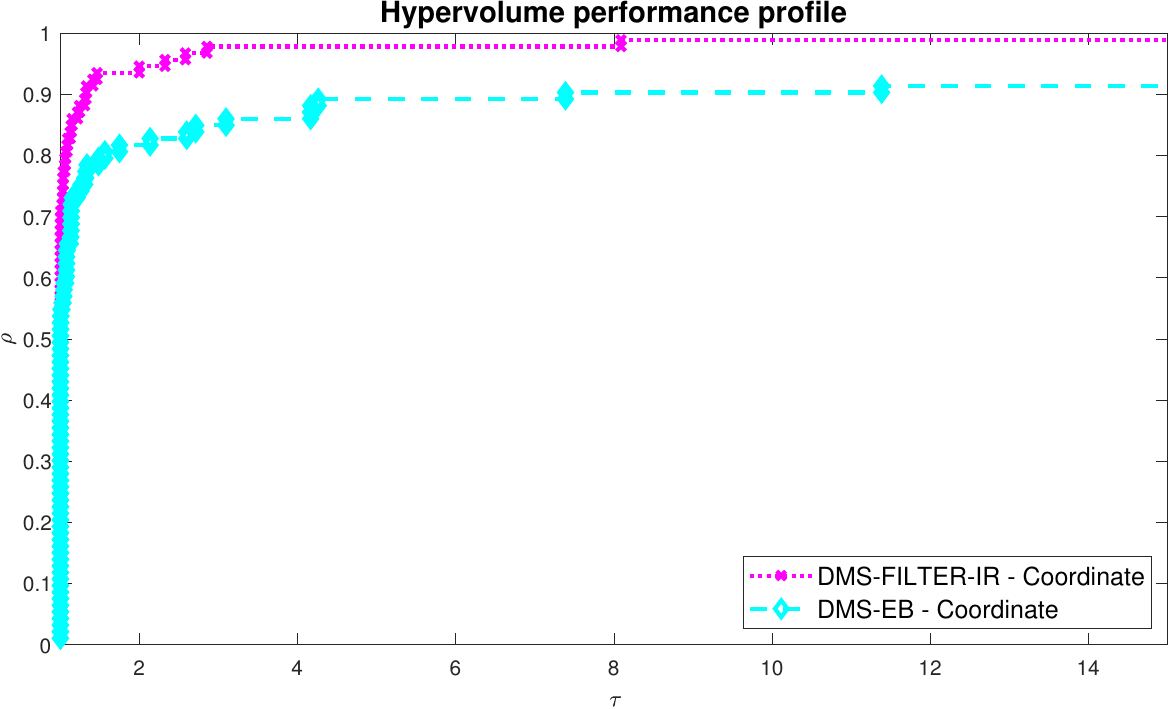}\includegraphics[scale=0.28]{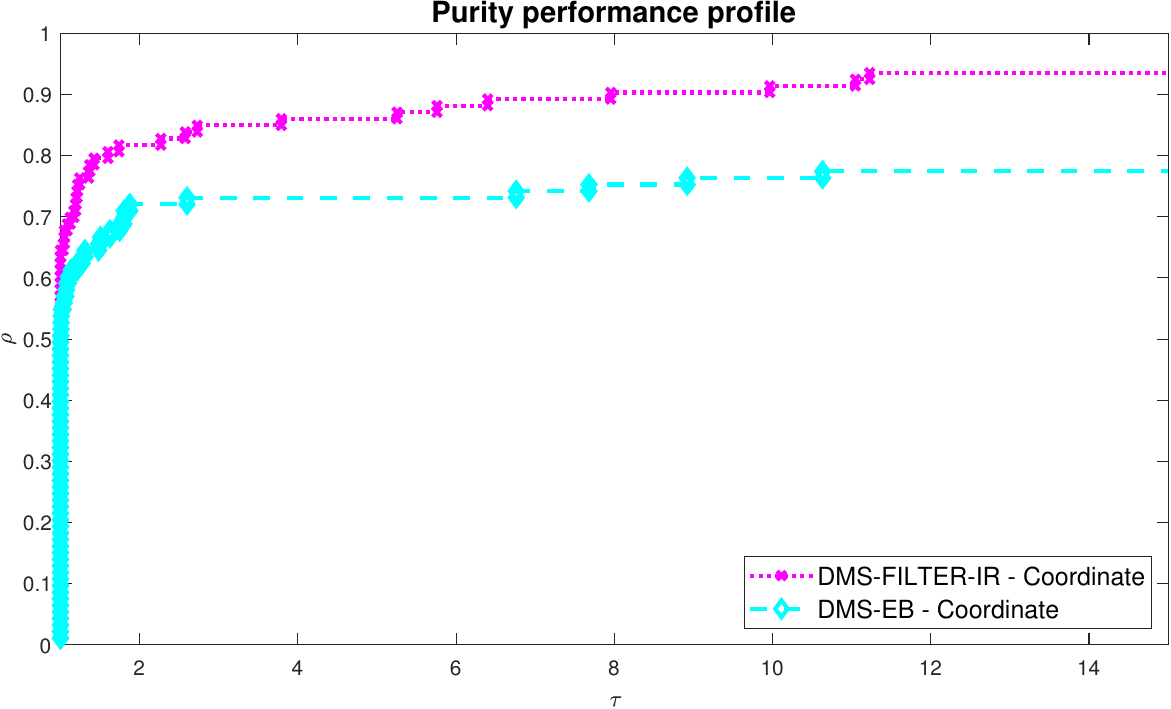}\includegraphics[scale=0.28]{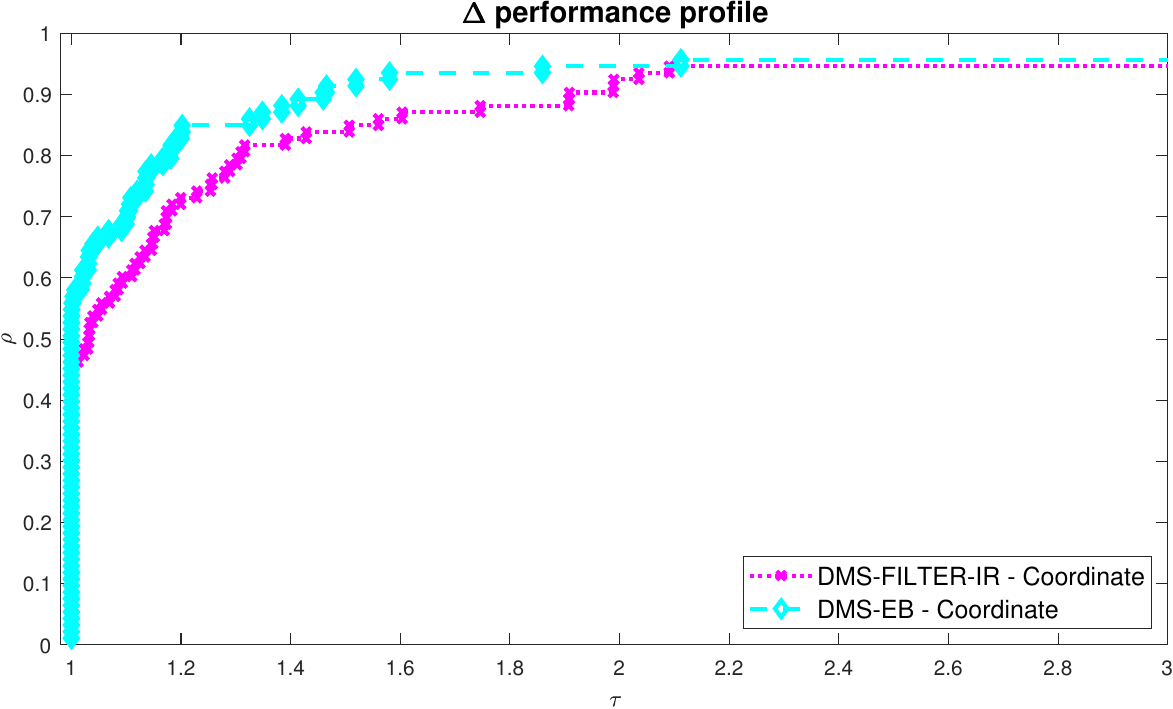}\includegraphics[scale=0.28]{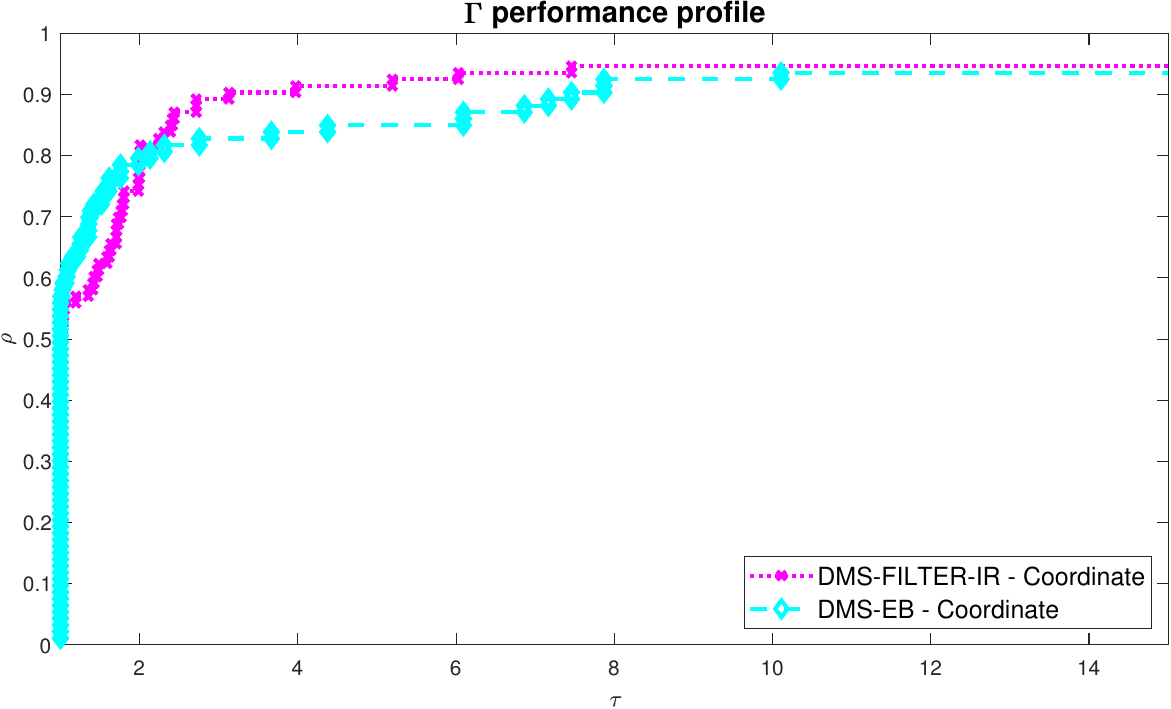}\\
\caption{Individual comparison between DMS-FILTER-IR and DFMO,
DMultiMADS-PB, and DMS based on performance profiles for a maximum
budget of $5000$ function evaluations.}
\label{individualperformance_5000funceval}
\end{sidewaysfigure}


\section{Conclusions}
\label{sec:conclusions} In this paper, we introduced, analyzed,
and tested a filter-based derivative-free approach, using an
inexact restoration step, for constrained multiobjective
optimization problems. The DMS-FILTER-IR algorithm is able to
address multiobjective derivative-free optimization problems
without a feasible initialization, regarding the relaxable
constraints.

Under the common assumptions used in derivative-free optimization
analysis, we could establish the existence of at least one
sequence of iterates generated by the algorithm that converges to
a Pareto-Clarke critical point of a related problem. Additional
conditions were provided that ensure the existence of a feasible
point and the convergence to a Pareto-Clarke critical point of the
original problem.

Extensive numerical experience allowed to establish the
competitiveness of the proposed algorithm, by comparison with
state-of-art solvers for multiobjective derivative-free
constrained optimization. Filter methods combined with inexact
restoration techniques are a valuable alternative to penalty
function methods or the use of a progressive barrier strategy.

Several extensions can be considered to improve the numerical
performance of DMS-FILTER-IR, namely the definition of a search
step, based on surrogate models~\cite{bras2020} or the use of
parallelism~\cite{tavares2022}. Moreover, techniques from
many-objective optimization literature can allow the development
of efficient numerical implementations of DMS-FILTER-IR to address
problems with more than two components in the objective function.

\bibliographystyle{abbrv}
\bibliography{refs}

\end{document}